\providecommand{\U}[1]{\protect\rule{.1in}{.1in}}
\newtheorem{theorem}{Theorem}
\newtheorem{lemma}{Lemma}
\newtheorem{proposition}{Proposition}
\newenvironment{proof}[1][Proof]{\textbf{#1.} }{\ \rule{0.5em}{0.5em}}
\begin{document}

\author{Blanchet, J., Lam, H., and Zwart, B.\\\emph{Columbia University, Boston University, and CWI}}
\title{Efficient Rare-event Simulation for Perpetuities}
\date{}
\maketitle

\begin{abstract}
We consider perpetuities of the form%
\[
D=B_{1}\exp\left(  Y_{1}\right)  +B_{2}\exp\left(  Y_{1}+Y_{2}\right)  +...,
\]
where the $Y_{j}$'s and $B_{j}$'s might be i.i.d. or jointly driven by a
suitable Markov chain. We assume that the $Y_{j}$'s satisfy the so-called
Cram\'{e}r condition with associated root $\theta_{\ast}\in(0,\infty)$ and
that the tails of the $B_{j}$'s are appropriately behaved so that $D$ is
regularly varying with index $\theta_{\ast}$. We illustrate by means of an
example that the natural state-independent importance sampling estimator
obtained by exponentially tilting the $Y_{j}$'s according to $\theta_{\ast}$
fails to provide an efficient estimator (in the sense of appropriately
controlling the relative mean squared error as the tail probability of
interest gets smaller). Then, we construct estimators based on state-dependent
importance sampling that are rigorously shown to be efficient.

\end{abstract}

\section{Introduction\label{SectIntro}}

We consider the problem of developing efficient rare-event simulation
methodology for computing the tail of a perpetuity (also known as infinite
horizon discounted reward). Perpetuities arise in the context of ruin problems
with investments and in the study of financial time series such as ARCH-type
processes (see for example, Embrechts \emph{et al.\ }(1997) and Nyrhinen (2001)).

In the sequel we let $X=\left(  X_{n}:n\geq0\right)  $ be an irreducible
finite state-space Markov chain (see Section 2 for precise definitions). In
addition, let $((\xi_{n},\eta_{n}):n\geq1)$ be a sequence of
i.i.d.\ (independent and identically distributed) two dimensional r.v.'s
(random variables) independent of the process $X$. Given $X_{0}=x_{0}$ and
$D_{0}=d_{0}$ the associated (suitably scaled by a parameter $\Delta>0$)
discounted reward at time $n$ takes the form
\begin{align*}
D_{n}\left(  \Delta\right)   &  =d_{0}+\lambda\left(  X_{1},\eta_{1}\right)
\Delta\exp\left(  S_{1}\right)  +\lambda\left(  X_{2},\eta_{2}\right)
\Delta\exp\left(  S_{2}\right) \\
&  +...+\lambda\left(  X_{n},\eta_{n}\right)  \Delta\exp\left(  S_{n}\right)
\end{align*}
where the accumulated rate process $\left(  S_{k}:k\geq0\right)  $ satisfies
\[
S_{k+1}=S_{k}+\gamma\left(  X_{k+1},\xi_{k+1}\right)  ,
\]
given an initial value $S_{0}=s_{0}$. In order to make the notation compact,
throughout the rest of the paper we shall often omit the explicit dependence
of $\Delta$ in $D_{n}\left(  \Delta\right)  $ and we will simply write $D_{n}%
$. We stress that $\Delta>0$ has been introduced as a scaling parameter which
eventually will be sent to zero. Introducing $\Delta$, as we shall see, will
be helpful in the development of the state-dependent importance sampling
algorithm that we study here.

The functions $(\gamma\left(  x,z\right)  :x\in\mathcal{S},z\in\mathbb{R)}$
and $(\lambda\left(  x,z\right)  :x\in\mathcal{S},z\in\mathbb{R)}$ are
deterministic and represent the discount and reward rates respectively. For
simplicity we shall assume that $\lambda\left(  \cdot\right)  $ is
non-negative. Define%
\begin{align}
\phi_{(s_{0},d_{0},x_{0})}\left(  \Delta\right)   &  \triangleq P\left(
D_{\infty}>1|S_{0}=s_{0},D_{0}=d_{0},X_{0}=x_{0}\right) \nonumber\\
&  =P\left(  T_{\Delta}<\infty|S_{0}=s_{0},D_{0}=d_{0},X_{0}=x_{0}\right)  ,
\label{NoA}%
\end{align}
where $T_{\Delta}=\inf\{n\geq0:D_{n}\left(  \Delta\right)  >1\}$.

Throughout this paper the distributions of $\lambda(x,\eta_{1})$ and
$\gamma(x,\xi_{1})$ are assumed to be known both analytically and via
simulation, as well as the transition probability of the Markov chain $X_{i}$.
Our main focus on this paper is on the efficient estimation via Monte Carlo
simulation of $\phi\left(  \Delta\right)  \triangleq\phi_{(0,0,x_{0})}\left(
\Delta\right)  $ as $\Delta\searrow0$ under the so-called Cram\'{e}r condition
(to be reviewed in Section 2) which in particular implies (see Theorem
\ref{ThmAsympt} below)%
\begin{equation}
\phi\left(  \Delta\right)  =c_{\ast}\Delta^{\theta_{\ast}}(1+o\left(
1\right)  ) \label{1}%
\end{equation}
for a given pair of constants $c_{\ast},\theta_{\ast}\in(0,\infty)$. Note
that
\[
\phi\left(  \Delta\right)  =P\left(  \sum_{k=1}^{\infty}\exp\left(
S_{k}\right)  \lambda\left(  X_{k},\eta_{k}\right)  >\frac{1}{\Delta}\right)
,
\]
so $\Delta$ corresponds to the inverse of the tail parameter of interest.

Although our results will be obtained for $s_{0}=0=d_{0}$, it is convenient to
introduce the slightly more general notation in (\ref{NoA}) to deal with the
analysis of the state-dependent algorithms that we will introduce.

Approximation (\ref{1}) is consistent with well known results in the
literature (e.g. Goldie (1991)) and it implies a polynomial rate of decay to
zero, in $1/\Delta$, for the tail of the distribution of the perpetuity
$\sum_{k=1}^{\infty}\exp\left(  S_{k}\right)  \lambda\left(  X_{k},\eta
_{k}\right)  $. The construction of our efficient Monte Carlo procedures is
based on importance sampling, which is a variance reduction technique popular
in rare-event simulation (see, for instance, Asmussen and Glynn (2008)). It is
important to emphasize that, since our algorithms are based on importance
sampling, they allow to efficiently estimate conditional expectations of
functions of the sample path of $\left\{  D_{n}\right\}  $ given that
$T_{\Delta}<\infty$. The computational complexity analysis of the estimation
of such conditional expectations is relatively straightforward given the
analysis of an importance sampling algorithm based on $\phi\left(
\Delta\right)  $ (see for instance the discussion in Adler, Blanchet and Liu
(2010)). Therefore, as it is customary in rare-event simulation, we
concentrate solely on the algorithmic analysis of a class of estimators for
$\phi\left(  \Delta\right)  $.

Asymptotic approximations related to (\ref{1}) go back to Kesten (1973) who
studied a suitable multidimensional analogue of $D_{\infty}$. In the
one-dimensional setting a key reference is Goldie (1991). Under i.i.d.
assumptions, he gave an expression for the constant $c_{\ast}$ which is only
explicit if $\theta_{\ast}$ is an integer. More recent work on this type of
asymptotics was conducted by Benoite de Saporta (2005), who assumed that the
interest rate process $\{\lambda(X_{n},\eta_{n})\}$ itself forms a finite
state Markov chain, and by Enriquez \emph{et.\ al} (2009), who obtained a
different representation for $c_{\ast}$ if the $X_{i}$ are i.i.d.\ (we will
refer to this important special case as the i.i.d.\ case). Collamore (2009)
studied the case when the sequence $\{\lambda(X_{n},\eta_{n})\}$ is i.i.d.
(not dependent on $X_{n}$) and $\{\gamma(X_{n},\xi_{n})\}$ is modulated by a
Harris recurrent Markov chain $\{X_{n}\}$. Since in our case we need certain
Markovian assumptions that apparently have not been considered in the
literature, at least in the form that we do here, in Section 2, we establish
an asymptotic result of the form (\ref{1}) that fits our framework.

Our algorithms allow to efficiently compute $\phi\left(  \Delta\right)  $ with
arbitrary degree of precision, in contrast to the error implied by asymptotic
approximations such as (\ref{1}). In particular, our algorithms can be used to
efficiently evaluate the constant $c_{\ast}$, whose value is actually of
importance in the statistical theory of ARCH processes (see for example,
Chapter 8 of Embrechts \emph{et al.} (1997)).

The efficiency of our simulation algorithms is tested according to widely
applied criteria in the context of rare event simulation. These efficiency
criteria requires the relative mean squared error of the associated estimator
to be appropriately controlled (see for instance the text of Asmussen and
Glynn (2008)). Let us recall some basic notions on these criteria in
rare-event simulation. An unbiased estimator $Z_{\Delta}$ is said to be
strongly efficient if $E(Z_{\Delta}^{2})=O(\phi\left(  \Delta\right)  ^{2})$.
The estimator is said to be asymptotically optimal if $E\left(  Z_{\Delta}%
^{2}\right)  \leq O(\phi\left(  \Delta\right)  ^{2-\varepsilon})$ for every
$\varepsilon>0$. Jensen's inequality yields $EZ_{\Delta}^{2}\geq\phi\left(
\Delta\right)  ^{2}$, so asymptotic optimality requires the best possible rate
of decay for the second moment, and hence the variance, of the underlying
estimator. Despite being a weaker criterion than strong efficiency, asymptotic
optimality is perhaps the most popular efficiency criterion in the rare-event
simulation literature given its convenience yet sufficiency to capture the
rate of decay.

We shall design both strongly efficient and asymptotically optimal estimators
and explain the advantages and disadvantages behind each of them from an
implementation standpoint. Some of these points of comparison relate to the
infinite horizon nature of $D_{\infty}$. We are interested in studying
unbiased estimators. In addition, besides the efficiency criteria we just
mentioned, at the end we are also interested in being able to estimate the
overall running time of the algorithm and argue that the total computational
cost scales graciously as $\Delta\longrightarrow0$. Our main contributions are
summarized as follows:

\bigskip

1) The development of an asymptotically optimal state-dependent importance
sampling estimator for $\phi\left(  \Delta\right)  $ (see Theorem
\ref{ThmMain1}). The associated estimator is shown to be unbiased and the
expected termination time of the algorithm is of order $O\left(  \log
(1/\Delta)^{p}\right)  $ for some $p<\infty$ (see Proposition
\ref{termination} in Section \ref{SectSDIS_Unbias_RunTime}).

\bigskip

2) An alternative, state-independent, estimator is also constructed which is
strongly efficient, see Proposition \ref{strongefficiency}. The
state-independent estimator, however, often will have to be implemented
incurring in some bias (which can be reduced by increasing the length of a
simulation run).

\bigskip

3) New proof techniques based on Lyapunov inequalities. Although Lyapunov
inequalities have been introduced recently for the analysis of importance
sampling estimators in Blanchet and Glynn (2008), the current setting demands
a different approach for constructing the associated Lyapunov function given
that the analysis of $\phi\left(  \Delta\right)  $ involves both light-tailed
and heavy-tailed features (see the discussion later this section; also see
Proposition \ref{PropLIa} in Section \ref{SectSDIS_Efficiency}).

\bigskip

4) A new class of counter-examples showing that applying a very natural
state-independent importance sampling strategy can in fact lead to infinite
variance (see Section \ref{Subsect_Inf_var}). This contribution adds to
previous work by Glasserman and Kou (1995) and further motivates the
advantages of state-dependent importance sampling.

\bigskip

5) The development of an asymptotic result of the form (\ref{1}) that may be
of independent interest (see Theorem \ref{ThmAsympt}).

\bigskip

As we mentioned earlier, importance sampling is a variance reduction technique
whose appropriate usage leads to an efficient estimator. It consists in
sampling according to a suitable distribution in order to appropriately
increase the frequency of the rare event of interest. The corresponding
estimator is just the indicator function of the event of interest times the
likelihood ratio between the nominal distribution and the sampling
distribution evaluated at the observed outcome. The sampling distribution used
to simulate is said to be the importance sampling distribution or the
change-of-measure. Naturally, in order to design efficient estimators one has
to mimic the behavior of the zero-variance change-of-measure, which coincides
precisely with the conditional distribution of $\{D_{n}\}$ given $D_{\infty
}>1$. Now, assume that $S_{0}=0=D_{0}$. As is known in the literature on ARCH
(it is actually made precise in Enriquez \emph{et.\ al.} (2009)), the event
$T_{\Delta}<\infty$ is typically caused by the event $E_{\Delta}=\{\max
_{k\geq0}S_{k}>\log(1/\Delta)\}$, which is the event that the additive process
$\{S_{k}\}$ hits a large value; see Section 2 for more discussion.
In turn, the limiting conditional distribution of the underlying random
variables given $E_{\Delta}$ as $\Delta\downarrow0$ is well understood and
strongly efficient estimators based on importance sampling have been developed
for computing $P(E_{\Delta})$ (see Collamore (2002)). Surprisingly, as we will
show by means of a simple example, directly applying the corresponding
importance sampling estimator which is strongly efficient for $P(E_{\Delta})$
can actually result in infinite variance for any $\Delta>0$ when estimating
$\phi\left(  \Delta\right)  $ (see Section 3.2).

Given the issues raised in the previous paragraph, the development of
efficient simulation estimators for computing $\phi\left(  \Delta\right)  $
calls for techniques that go beyond the direct application of standard
importance sampling estimators. In particular, our techniques are based on
state-dependent importance sampling, which has been substantially studied in
recent years (see for example, Dupuis and Wang (2004 and 2007), Blanchet and
Glynn (2008), and Blanchet, Leder and Glynn (2009)). The work of Dupuis and
Wang provides a criterion, based on a suitable non-linear partial differential
inequality, in order to guarantee asymptotic optimality in light tailed
settings. It is crucial for the development of Dupuis and Wang to have an
exponential rate of decay in the parameter of interest (in our case $1/\Delta
$). Blanchet and Glynn (2008) develop a technique based on Lyapunov
inequalities that provides a criterion that can be used to prove asymptotic
optimality or strong efficiency beyond the exponential decay rate setting.
Such criterion, however, demands the construction of a suitable Lyapunov
function whose nature varies depending on the type of large deviations
environment considered (light vs heavy-tails). The construction of such
Lyapunov functions has been studied in light and heavy-tailed environments
(see for instance, Blanchet, Leder and Glynn (2009) and Blanchet and Glynn (2008)).

The situation that we consider here is novel since it has both light and
heavy-tailed features. On one hand, the large deviations behavior is caused by
the event $E_{\Delta}$, which involves light-tailed phenomena. On the other
hand, the scaling of the probability of interest, namely, $\phi\left(
\Delta\right)  $ is not exponential but polynomial in $1/\Delta$ (i.e. the
tail of the underlying perpetuity is heavy-tailed, in particular, Pareto with
index $\theta_{\ast}$). Consequently, the Lyapunov function required to apply
the techniques in Blanchet and Glynn (2008) includes new features relative to
what has been studied in the literature.

Finally, we mention that while rare event simulation of risk processes has
been considered in the literature (see for instance Asmussen (2000)), such
simulation in the setting of potentially negative interest rates has been
largely unexplored. A related paper is that of Asmussen and Nielsen (1995) in
which deterministic interest rates are considered. A conference proceedings
version of this paper (Blanchet and Zwart (2007), without proofs) considers
the related problem of estimating the tail of a perpetuity with stochastic
discounting, but the discounts are assumed to be i.i.d. and premiums and
claims are deterministic. Finally, we also note a paper by Collamore (2002),
who considered ruin probability of multidimensional random walks that are
modulated by general state space Markov chains.

During the second revision of this paper Collamore et al (2011) proposed an
independent algorithm for the tail distribution of fixed point equations,
which include perpetuities as a particular case. We shall discuss more about
this algorithm in Section 7.

The rest of the paper is organized as follows. In Section 2 we state our
assumptions and review some large deviations results for $\phi\left(
\Delta\right)  $. Section 3 focuses on state-independent sampling. The
state-dependent sampling algorithm is developed in Section 4, and its
efficiency analysis and cost-per-replication are studied in Sections 5 and 6.
In Section 7 we include additional extensions and considerations. Section 8
illustrates our results with a numerical example.

\bigskip

\section{Some Large Deviations for Perpetuities\label{SectLDResults}}

As discussed in the Introduction, we shall assume that the process $\left(
S_{n}:n\geq0\right)  $ is a Markov random walk. As it is customary in the
large deviations analysis of quantities related to these objects, we shall
impose some assumptions in order to guarantee the existence of an asymptotic
logarithmic moment generating function for $S_{k}$.

First we have the following assumption:

\textbf{Assumption 1:} We assume that $X$ is an irreducible Markov chain
taking values in a finite state space $\mathcal{S}$ with transition matrix
$(K\left(  x,y\right)  :x,y\in\mathcal{S})$. Moreover, we further assume that
the $\xi_{k}$'s and $\gamma\left(  \cdot\right)  $ satisfy
\begin{equation}
\sup_{x\in\mathcal{S},\theta\in\mathcal{N}}E\exp\left(  \theta\gamma\left(
x,\xi_{1}\right)  \right)  <\infty, \label{Neigh1}%
\end{equation}
where $\mathcal{N}$ is a neighborhood of the origin.

\bigskip

If Assumption 1 is in force, the Perron-Frobenius theorem for positive and
irreducible matrices guarantees the existence of $\left(  u_{\theta}\left(
x\right)  :x\in\mathcal{S},\theta\in\mathcal{N}\right)  $ and $\exp\left(
\psi\left(  \theta\right)  \right)  $ so that%
\begin{equation}
u_{\theta}\left(  x\right)  =E_{x}[\exp\left(  \theta\gamma\left(  X_{1}%
,\xi_{1}\right)  -\psi\left(  \theta\right)  \right)  u_{\theta}\left(
X_{1}\right)  ]. \label{Eigen1}%
\end{equation}
The function $u_{\theta}\left(  \cdot\right)  $ is strictly positive and
unique up to constant scalings. Indeed, to see how the Perron-Frobenius
theorem is applied, define%
\[
E\exp\left(  \theta\gamma\left(  x,\xi_{1}\right)  \right)  =\exp\left(
\chi\left(  x,\theta\right)  \right)
\]
and note that (\ref{Eigen1}) is equivalent to the eigenvalue problem%
\[
\left(  Q_{\theta}u_{\theta}\right)  \left(  x\right)  =\exp\left(
\psi\left(  \theta\right)  \right)  u_{\theta}\left(  x\right)  ,
\]
where $Q_{\theta}\left(  x,y\right)  =K\left(  x,y\right)  \exp\left(
\chi\left(  y,\theta\right)  \right)  $.

\bigskip

We also impose the following assumption that is often known as Cram\'{e}r's condition.

\bigskip

\textbf{Assumption 2: }Suppose that there exists $\theta_{\ast}>0$ such that
$\psi\left(  \theta_{\ast}\right)  =0$. Moreover, assume that there exists
$\theta>\theta_{\ast}$ such that $\psi\left(  \theta\right)  <\infty$.

\bigskip

In order to better understand the role of Assumption 2, it is useful to note
that under Assumption 1, given $X_{0}=x_{0}$, $\tau\left(  x_{0}\right)
=\inf\{k\geq1:X_{k}=x_{0}\}$ is finite almost surely and $D=\sum_{k=1}%
^{\infty}\exp\left(  S_{k}\right)  \lambda\left(  X_{k},\eta_{k}\right)  $
admits the decomposition
\begin{equation}
D=B+\exp\left(  Y\right)  D^{\prime}, \label{Dec1}%
\end{equation}
where $D^{\prime}$ is identical in distribution to $D$, and%
\begin{equation}
Y=S_{\tau\left(  x_{0}\right)  },\text{ }B=\sum_{j=1}^{\tau\left(
x_{0}\right)  }\lambda\left(  X_{j},\eta_{j}\right)  \exp(S_{j}), \label{Dec2}%
\end{equation}
and $D^{\prime}$ is equal in distribution to $D$ and independent of $\left(
B,S_{\tau\left(  x_{0}\right)  }\right)  $. In other words, $D$ can be
represented as a perpetuity with i.i.d.\ pairs of reward and discount rates.
This decomposition will be invoked repeatedly during the course of our
development. Now, as we shall see in the proof of Theorem \ref{ThmAsympt}
below, it follows that $\theta_{\ast}>0$ appearing in Assumption 2 is the
Cram\'{e}r root associated to $Y$, that is,%
\begin{equation}
E\exp\left(  \theta_{\ast}Y\right)  =1. \label{Etheta*2}%
\end{equation}
Note also that since the moment generating function of $Y$ is convex, and
$\theta_{\ast}>0$, we must have that $EY<0$ and therefore by regenerative
theory we must have that $E\gamma(X_{\infty},\xi_{1})<0$.

\bigskip

An additional final assumption is imposed in our development.

\textbf{Assumption 3: }Assume that $\sup_{x\in\mathcal{S}}E_{x}\lambda\left(
X_{1},\eta_{1}\right)  ^{\alpha}<\infty$ for each $\alpha\in(0,\infty)$.

\bigskip

The following examples are given to illustrate the flexibility of our framework.

\bigskip

\textbf{Example 1} \label{ExARCH} ARCH sequences have been widely used in
exchange rate and log-return models (see for example, Embrechts \emph{et al.}
(1997)). In these models the object of interest, $A_{n}$, are the standard
deviations of the log-return. The simplest case of ARCH sequences is the
ARCH(1) process, which satisfies
\[
A_{n+1}^{2}=(\alpha_{0}+\alpha_{1}A_{n}^{2})Z_{n+1}^{2}.
\]
Typically, the $Z_{n}$'s are i.i.d.\ standard Gaussian random variables, and
$\alpha_{0}>0$ and $\alpha_{1}<1$. The stationary distribution is a
perpetuity. We can directly work with the stationary distribution of this
process or transform the problem into one with constant rewards (equal to
$\alpha_{0}$) by noting that%
\[
T_{n+1}\triangleq\alpha_{0}+\alpha_{1}A_{n+1}^{2}=\alpha_{0}+\alpha_{1}%
(\alpha_{0}+\alpha_{1}A_{n}^{2})Z_{n+1}^{2}=\alpha_{0}+\alpha_{1}T_{n}%
Z_{n+1}^{2}.
\]
We obtain that%
\[
T_{\infty}-\alpha_{0}\overset{D}{=}B_{1}\exp\left(  Y_{1}\right)  +B_{2}%
\exp\left(  Y_{1}+Y_{2}\right)  +...
\]
where $B_{i}=\alpha_{0}$ and $Y_{i}=\log\left(  \alpha_{1}Z_{i}^{2}\right)  $
for $i\geq1$. Assumptions 1 to 3 are in place in this setting.

\bigskip

\textbf{Example 2} \label{ExPERP} A changing economic environment can be
modeled by say, a two-state Markov chain denoting good and bad economic
states. We can then model the discounted return of a long-term investment
under economic uncertainty as a perpetuity with this underlying Markov
modulation. Denoting $X_{i}\in\{\text{good},\text{bad}\},i=1,2,\ldots$ as the
Markov chain, our return can be represented as
\[
D=B_{1}\exp(Y_{1})+B_{2}\exp(Y_{1}+Y_{2})+\cdots
\]
where $B_{i}=\lambda(X_{i},\eta_{i})$ and $Y_{i}=\gamma(X_{i},\xi_{i})$ are
the return and discount rate at time $i$, and $\eta_{i}$ and $\xi_{i}$ are
i.i.d. r.v.'s denoting the individual random fluctuations.

\bigskip

We now state a result that is useful to test the optimality of our algorithms.

\begin{theorem}
\label{ThmAsympt}Under Assumptions 1 to 3,
\[
\phi\left(  \Delta\right)  =c_{\ast}\Delta^{\theta_{\ast}}(1+o\left(
1\right)  )
\]
as $\Delta\searrow0$.
\end{theorem}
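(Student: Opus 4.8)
The plan is to reduce $D$ to an i.i.d.\ perpetuity and then invoke the implicit renewal theorem of Goldie (1991). Iterating the regeneration decomposition (\ref{Dec1})--(\ref{Dec2}), using that $D'$ is independent of $(B,Y)$, represents $D$ as $D\overset{d}{=}\sum_{n\ge1}\big(\prod_{i<n}A_i\big)B_n$, where the $(A_i,B_i)$ are i.i.d.\ copies of $(A,B):=(\exp(Y),B)$ with $Y=S_{\tau(x_0)}$ and $B$ as in (\ref{Dec2}); equivalently $D\overset{d}{=}AD'+B$ with $D'$ an independent copy of $D$. For Goldie's theorem it then suffices to verify: (i)~$E[A^{\theta_\ast}]=1$; (ii)~$0<E[A^{\theta_\ast}\log A]<\infty$; (iii)~$E[B^{\theta_\ast}]<\infty$; (iv)~the law of $Y$ is non-arithmetic; and (v)~$P(Ax+B=x)<1$ for every $x$, which holds here since $A=\exp(Y)$ is genuinely random and $B\ge0$ is not identically zero. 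Granting (i)--(v), Goldie's theorem gives $P(D>t)=c_\ast t^{-\theta_\ast}(1+o(1))$ with $c_\ast=\big(\theta_\ast E[A^{\theta_\ast}\log A]\big)^{-1}E\big[(AD'+B)^{\theta_\ast}-(AD')^{\theta_\ast}\big]$, finite and---since $D\ge0$, so the companion left-tail constant vanishes---strictly positive; as $\phi(\Delta)=P(D>1/\Delta)$, this is the assertion. (Here $\theta_\ast$ need not be an integer: only the \emph{explicitness} of $c_\ast$, not its existence, is at issue.)

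For (i)--(ii) I would use the Perron--Frobenius martingale $M_n=\exp(\theta S_n-n\psi(\theta))\,u_\theta(X_n)/u_\theta(X_0)$, whose martingale property is exactly (\ref{Eigen1}). Since $X$ is finite and irreducible, $\tau(x_0)$ is a.s.\ finite with a geometric tail, so---combined with the exponential-moment bound on $\bar S:=\max_{1\le j\le\tau(x_0)}S_j$ established below---the stopped process $M_{n\wedge\tau(x_0)}$ is bounded, uniformly in $n$, by an integrable random variable at $\theta=\theta_\ast$, where $\psi(\theta_\ast)=0$; optional stopping together with $X_{\tau(x_0)}=x_0$ then yields $E[\exp(\theta_\ast Y)]=1$. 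This verifies (i) and justifies (\ref{Etheta*2}) and the identification, stated after Assumption 2, of $\theta_\ast$ with the Cram\'er root of $Y$. The same estimates show $g(\theta):=E[\exp(\theta Y)]$ is finite and smooth on an interval containing $[0,\theta_\ast]$; as $g$ is convex with $g(0)=g(\theta_\ast)=1$ and $\theta_\ast>0$, we get $g'(\theta_\ast)=E[Y\exp(\theta_\ast Y)]=E[A^{\theta_\ast}\log A]>0$ (and $g'(0)=EY<0$, recovering the remark after Assumption 2), its finiteness coming from the same exponential-moment input, the negative part $Y^{-}\exp(\theta_\ast Y)$ being bounded. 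This gives (ii).

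The crux is (iii), resting on an exponential-moment bound also needed above: $E[\exp((\theta_\ast+\delta)\bar S)]<\infty$ for some $\delta>0$. The point is that a single regeneration cycle $[1,\tau(x_0)]$ is short---its length has a geometric tail whose rate depends on $X$ alone and is unaffected by the excursions of $S$---so exponentially tilting the increments by $\theta$ (which keeps $X$ an ergodic finite chain) makes $E[\exp(\theta S_j)\mathbf{1}\{j\le\tau(x_0)\}]$ decay geometrically in $j$ for $\theta$ slightly above $\theta_\ast$, since $\psi(\theta_\ast)=0$ leaves strict room; summing over $j\ge1$ and using $\exp(\theta\bar S)\le\sum_{j\ge1}\exp(\theta S_j)\mathbf{1}\{j\le\tau(x_0)\}$ gives the bound. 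By contrast, the \emph{all-time} maximum $\max_{k\ge0}S_k$ has tail of order exactly $e^{-\theta_\ast x}$, reflecting the renewal over infinitely many cycles---this is the event $E_\Delta$. With $\bar S$ controlled, $B\le\big(\sum_{j=1}^{\tau(x_0)}\lambda(X_j,\eta_j)\big)\exp(\bar S)$ and H\"older's inequality reduce (iii) to the exponential moment of $\bar S$ and to $\sum_{j\le\tau(x_0)}\lambda(X_j,\eta_j)$ having all polynomial moments, which is immediate from Assumption 3 and the geometric tail of $\tau(x_0)$. I expect this moment bookkeeping---keeping every exponent pinned near $\theta_\ast$ while exploiting the lighter tail of a single-cycle maximum---to be the main obstacle, along with hypothesis (iv), which I would impose directly on $Y=S_{\tau(x_0)}$ (in the arithmetic case one obtains the familiar version with $c_\ast$ replaced by a periodic function of $\log(1/\Delta)$); the remaining steps merely assemble classical facts.
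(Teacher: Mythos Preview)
Your proposal is correct and follows essentially the same route as the paper: reduce to an i.i.d.\ perpetuity via the regeneration decomposition (\ref{Dec1})--(\ref{Dec2}) and then verify the hypotheses of Goldie's implicit renewal theorem, the key technical step in both cases being an exponential-moment bound on the within-cycle maximum $\bar S=\max_{1\le j\le\tau(x_0)}S_j$ slightly above $\theta_\ast$. The only noteworthy differences are that your sum-over-$j$ bound $\exp(\theta\bar S)\le\sum_{j\ge1}\exp(\theta S_j)\mathbf{1}\{j\le\tau(x_0)\}$ is a touch more direct than the paper's change-of-measure argument via $P_{x_0}^{\theta_\ast}(\tau(x_0)\ge T(b))$, and that you rightly flag the non-arithmetic hypothesis on $Y$, which the paper leaves implicit.
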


\bigskip

The proof of Theorem \ref{ThmAsympt} will be given momentarily, but first let
us discuss the intuition behind the asymptotics described in the theorem. It
is well known that under our assumptions $\exp(\max\{S_{k}:k\geq0\})$ is
regularly varying with index $-\theta_{\ast}<0$ (a brief argument indicating
the main ideas behind this fact is given in Section 3.1). The principle of the
largest jump in heavy-tailed analysis indicates that the large deviations
behavior of $D$ is dictated by a large jump of size $b/\Delta$ arising from
the largest contribution in the sum of the terms defining $D$. Given that the
reward rates are light-tailed, such contribution is likely caused by
$\exp(\max\{S_{k}:k\geq0\})$, as made explicit by Enriquez \emph{et.\ al.}
(2009) in the i.i.d.\ case. Therefore, the large deviations behavior of $D$ is
most likely caused by the same mechanism that causes a large deviations
behavior in $\exp(\max\{S_{k}:k\geq0\})$. This type of intuition will be
useful in developing an efficient importance sampling scheme for estimating
the tail of $D$.

\bigskip

\begin{proof}
[Proof of Theorem \ref{ThmAsympt}]Note that equations (\ref{Dec1}) and
(\ref{Dec2}) allow us to apply Theorem 4.1 from Goldie (1991). In particular,
in order to apply Goldie's results we need to show that
\begin{align}
E_{x_{0}}\exp\left(  \theta_{\ast}S_{\tau\left(  x_{0}\right)  }\right)   &
=1,\label{Rthstar}\\
E_{x_{0}}\exp\left(  \theta S_{\tau\left(  x_{0}\right)  }\right)   &
<\infty, \label{Rbdd}%
\end{align}
for some $\theta>\theta_{\ast}$ and that%
\begin{equation}
E_{x_{0}}B^{\alpha}<\infty\label{Mom1}%
\end{equation}
for some $\alpha>\theta_{\ast}$ (conditions (\ref{Rthstar}), (\ref{Rbdd}) and
(\ref{Mom1}) here correspond to conditions (2.3), (2.4) and (4.2) respectively
in Goldie\ (1991)). First we show (\ref{Rthstar}). Note that equation
(\ref{Eigen1}) implies that the process%
\[
M_{n}^{\theta_{\ast}}=\frac{u_{\theta_{\ast}}\left(  X_{n}\right)  }%
{u_{\theta_{\ast}}\left(  x_{0}\right)  }\exp\left(  \theta_{\ast}%
S_{n}\right)
\]
is a positive martingale. Therefore, we have that%
\begin{align*}
1  &  =E_{x_{0}}M_{n\wedge\tau\left(  x_{0}\right)  }^{\theta_{\ast}}\\
&  =E_{x_{0}}[\exp\left(  \theta_{\ast}S_{\tau\left(  x_{0}\right)  }\right)
I(\tau\left(  x_{0}\right)  \leq n)]\\
&  +E_{x_{0}}\left[  \frac{u_{\theta_{\ast}}\left(  X_{n}\right)  }%
{u_{\theta_{\ast}}\left(  x_{0}\right)  }\exp\left(  \theta_{\ast}%
S_{n}\right)  I(\tau\left(  x_{0}\right)  >n)\right]  .
\end{align*}
By the monotone convergence theorem we have that%
\[
E\left[  \exp\left(  \theta_{\ast}S_{\tau\left(  x_{0}\right)  }\right)
I(\tau\left(  x_{0}\right)  \leq n)\right]  \longrightarrow E\exp\left(
\theta_{\ast}S_{\tau\left(  x_{0}\right)  }\right)
\]
as $n\nearrow\infty$. On the other hand note that (\ref{Eigen1}) implies that
the matrix $\left(  K_{\theta_{\ast}}\left(  x,y\right)  :x,y\in
\mathcal{S}\right)  $ defined via%
\[
K_{\theta_{\ast}}\left(  x,y\right)  =K\left(  x,y\right)  \frac
{u_{\theta_{\ast}}\left(  y\right)  \exp\left(  \chi\left(  y,\theta_{\ast
}\right)  \right)  }{u_{\theta_{\ast}}\left(  x\right)  }%
\]
is an irreducible stochastic matrix. Now let $\left(  \widetilde{K}%
_{\theta_{\ast}}\left(  x,y\right)  :x,y\in\mathcal{S}\text{ }\backslash
\{x_{0}\}\right)  $ be the submatrix of $K_{\theta_{\ast}}\left(
\cdot\right)  $ that is obtained by removing the row and column corresponding
to state $x_{0}$. Observe that
\[
\left(  \widetilde{K}_{\theta_{\ast}}^{n}\mathbf{1}\right)  \left(
x_{0}\right)  =E_{x_{0}}\left[  \frac{u_{\theta_{\ast}}\left(  X_{n}\right)
}{u_{\theta_{\ast}}\left(  x_{0}\right)  }\exp\left(  \theta_{\ast}%
S_{n}\right)  I(\tau\left(  x_{0}\right)  >n)\right]  .
\]
By irreducibility we have that $\widetilde{K}_{\theta_{\ast}}\left(
\cdot\right)  $, therefore%
\[
E_{x_{0}}\left[  \frac{u_{\theta_{\ast}}\left(  X_{n}\right)  }{u_{\theta
_{\ast}}\left(  x_{0}\right)  }\exp\left(  \theta_{\ast}S_{n}\right)
I(\tau\left(  x_{0}\right)  >n)\right]  \longrightarrow0
\]
as $n\nearrow\infty$, obtaining (\ref{Rthstar}). The bound (\ref{Rbdd})
follows easily by noting that%
\begin{align*}
&  E_{x_{0}}\left[  \exp\left(  \theta S_{\tau\left(  x_{0}\right)  }\right)
\right] \\
&  =\sum_{k=1}^{\infty}E_{x_{0}}\left[  \exp\left(  \theta S_{k}\right)
;\tau\left(  x_{0}\right)  >k-1,X_{k}=x_{0}\right] \\
&  =\sum_{k=1}^{\infty}E_{x_{0}}[\exp\left(  \theta S_{k-1}\right)  K\left(
X_{k-1},x_{0}\right)  \exp\left(  \chi\left(  \theta,x_{0}\right)  \right)
;\tau\left(  x_{0}\right)  >k-1].
\end{align*}
So, if we define, for $x\neq x_{0}$%
\[
v_{\theta}\left(  x\right)  =\exp\left(  \chi\left(  \theta,x_{0}\right)
\right)  K\left(  x,x_{0}\right)  \frac{u_{\theta}\left(  x_{0}\right)
}{u_{\theta}\left(  x\right)  },
\]
and
\[
R_{\theta}\left(  x,y\right)  =\exp\left(  \chi\left(  \theta,y\right)
\right)  K\left(  x,y\right)  \frac{u_{\theta}\left(  y\right)  }{u_{\theta
}\left(  x\right)  }%
\]
for $x,y\neq x_{0}$ we see that%
\[
E_{x_{0}}\left[  \exp\left(  \theta S_{\tau\left(  x_{0}\right)  }\right)
\right]  =\sum_{k=1}^{\infty}\left(  R_{\theta}^{k}v_{\theta}\right)  \left(
x_{0}\right)  .
\]
Note that $R_{\theta_{\ast}}=\widetilde{K}_{\theta_{\ast}}$ is strictly
substochastic. So, by continuity there exists $\theta>\theta_{\ast}$ for which
$R_{\theta}$ has a spectral radius which is strictly less than one and
therefore (\ref{Rbdd}) holds. Finally, we establish (\ref{Mom1}). Observe that%
\[
B\leq\tau\left(  x_{0}\right)  \max_{1\leq k\leq\tau\left(  x_{0}\right)
}\lambda\left(  X_{k},\eta_{k}\right)  \exp\left(  \max\{S_{k}:1\leq k\leq
\tau\left(  x_{0}\right)  \}\right)  .
\]
Therefore, for $1/p+1/q+1/r=1$ and $p,q,r>1$ we have that%
\begin{align*}
EB^{\alpha}  &  \leq E\tau\left(  x_{0}\right)  ^{\alpha}\max_{1\leq k\leq
\tau\left(  x_{0}\right)  }\lambda\left(  X_{k},\eta_{k}\right)  ^{\alpha}%
\exp\left(  \max\{\alpha S_{k}:1\leq k\leq\tau\left(  x_{0}\right)  \}\right)
\\
&  \leq(E_{x_{0}}\tau\left(  x_{0}\right)  ^{p\alpha})^{1/p}\times(E_{x_{0}%
}\max_{1\leq k\leq\tau\left(  x_{0}\right)  }\lambda\left(  X_{k},\eta
_{k}\right)  ^{q\alpha})^{1/q}\\
&  \times(E_{x_{0}}\exp\left(  \max\{r\alpha S_{k}:1\leq k\leq\tau\left(
x_{0}\right)  \}\right)  )^{1/r}.
\end{align*}
Since $E_{x_{0}}\tau\left(  x_{0}\right)  ^{p\alpha}+E_{x_{0}}\max_{1\leq
k\leq\tau\left(  x_{0}\right)  }\lambda\left(  X_{k},\eta_{k}\right)
^{q\alpha}<\infty$ for all $p,q\in(0,\infty)$ it suffices to show that%
\begin{equation}
E_{x_{0}}\exp\left(  \max\{\theta S_{k}:1\leq k\leq\tau\left(  x_{0}\right)
\}\right)  <\infty\label{Rbdd2}%
\end{equation}
for some $\theta>\theta_{\ast}$. In order to do this define $T\left(
b\right)  =\inf\{k\geq0:S_{k}>b\}$ and note that%
\[
P_{x_{0}}(\max\{S_{k}:1\leq k\leq\tau\left(  x_{0}\right)  \}>b)\leq P_{x_{0}%
}(T\left(  b\right)  \leq\tau\left(  x_{0}\right)  )
\]
Bound (\ref{Rbdd}) implies that $S_{\tau\left(  x_{0}\right)  }$ decays at an
exponential rate that is strictly larger than $\theta_{\ast}$. To analyze
$P_{x_{0}}(T\left(  b\right)  \leq\tau\left(  x_{0}\right)  )$, let $P_{x_{0}%
}^{\theta_{\ast}}\left(  \cdot\right)  $ (respectively $E_{x_{0}}%
^{\theta_{\ast}}\left(  \cdot\right)  $) be the probability measure (resp. the
expectation operator) associated to the change-of-measure induced by the
martingale $(M_{n}^{\theta_{\ast}}:n\geq0)$ introduced earlier. Note that%
\begin{align*}
P_{x_{0}}(T\left(  b\right)   &  \leq\tau\left(  x_{0}\right)  )=E_{x_{0}%
}^{\theta_{\ast}}\left(  \exp\left(  -\theta_{\ast}S_{T\left(  b\right)
}\right)  \frac{u_{\theta_{\ast}}\left(  x_{0}\right)  }{u_{\theta_{\ast}%
}\left(  X_{T\left(  b\right)  }\right)  }I\left(  T\left(  b\right)  \leq
\tau\left(  x_{0}\right)  \right)  \right) \\
&  \leq c\exp\left(  -\theta_{\ast}b\right)  P_{x_{0}}^{\theta_{\ast}}\left(
\tau\left(  x_{0}\right)  \geq T\left(  b\right)  \right)  .
\end{align*}
The probability $P_{x_{0}}^{\theta_{\ast}}\left(  \tau\left(  x_{0}\right)
>T\left(  b\right)  \right)  $ decays exponentially fast as $b\nearrow\infty$
by using a standard large deviations argument on $T\left(  b\right)  /b$ and
because $\tau\left(  x_{0}\right)  $ has exponentially decaying tails.
Therefore we obtain (\ref{Rbdd2}), which in turn yields the conclusion of the theorem.
\end{proof}

\bigskip

\section{State-Independent Importance
Sampling\label{SectIndependentIS_Example}}

In order to design efficient estimators we will apply importance sampling. It
is well known (see for example, Liu (2001)) that the zero-variance importance
sampling distribution is dictated by the conditional distribution of the
process (in this case the triplet $\left\{  \left(  S_{n},D_{n},X_{n}\right)
\right\}  $), given the occurrence of the rare event in question. As we
discussed in the previous section, the occurrence of the event $T_{\Delta
}<\infty$ is basically driven by the tail behavior of $\max\{S_{n}:n\geq0\}$.
In turn, the large deviations behavior of Markov random walks (such as $S$) is
well understood from a simulation standpoint and, under our assumptions, there
is a natural state-independent change-of-measure that can be shown to be
efficient for estimating the tail of $\max\{S_{n}:n\geq0\}$.

The present section is organized as follows. First, we shall explain the
change-of-measure that is efficient for estimating the tail of $\max
\{S_{n}:n\geq0\}$ because it will serve as the basis for our change-of-measure
in the setting of perpetuities (but some modifications are crucial to
guarantee good performance). After that, we shall show by means of an example
that this type of importance sampling algorithm can lead to estimators that
have infinite variance. We then close this section with a modified
state-independent importance sampling algorithm that is strongly efficient but biased.

\subsection{The standard approach\label{SecStandardApproach}}

As we indicated in the proof of Theorem \ref{ThmAsympt}, given $\theta$ and
$X_{0}=x_{0}$ equation (\ref{Eigen1}) indicates that the process%
\[
M_{n}^{\theta}=\exp(\theta S_{n}-n\psi\left(  \theta\right)  )\frac{u_{\theta
}\left(  X_{n}\right)  }{u_{\theta}\left(  x_{0}\right)  },\text{ \ }n\geq0
\]
is a positive martingale as long as $\psi\left(  \theta\right)  <\infty$ and
therefore it generates a change-of-measure. The probability measure in
path-space induced by this martingale is denoted by $P_{x_{0}}^{\theta}%
(\cdot)$ and in order to simulate the process $((D_{n},S_{n},X_{n}):n\geq0)$
according to $P_{x_{0}}^{\theta}(\cdot)$ one proceeds as follows.

\begin{enumerate}
\item Generate $X_{1}$ according to the transition matrix%
\[
K_{\theta}\left(  x,y\right)  =K\left(  x,y\right)  \exp\left(  \chi\left(
y,\theta\right)  -\psi\left(  \theta\right)  \right)  u_{\theta}\left(
y\right)  /u_{\theta}\left(  x\right)  ,
\]
which is guaranteed to be a Markov transition matrix by the definition of
$u_{\theta}\left(  \cdot\right)  $ and $\chi\left(  \cdot,\theta\right)  $.

\item Given $X_{1}=y$, sample $\gamma\left(  y,\xi_{1}\right)  $ according to
exponential tilting given by%
\[
P^{\theta}\left(  \gamma\left(  y,\xi_{1}\right)  \in dz\right)  =\exp\left(
\theta z-\chi\left(  y,\theta\right)  \right)  P\left(  \gamma\left(
y,\xi_{1}\right)  \in dz\right)  .
\]

\item Simulate $\lambda\left(  y,\eta_{1}\right)  $ according to the nominal
conditional distribution of $\eta_{1}$ given that $X_{1}=y$ and $\gamma\left(
y,\xi_{1}\right)  $.
\end{enumerate}

The previous rules allow to obtain $(D_{1},S_{1},X_{1})$ given $(D_{0}%
,S_{0},X_{0})$. Subsequent steps are performed in a completely analogous way.

\bigskip

Note that if one selects $\theta=\theta_{\ast}$ then we have that $\psi\left(
\theta_{\ast}\right)  =0$ and also $S_{n}/n\longrightarrow\psi^{\prime}\left(
\theta_{\ast}\right)  >0$ a.s. with respect to $P_{x_{0}}^{\theta_{\ast}%
}\left(  \cdot\right)  $. If $\sigma\left(  y\right)  =\inf\{n\geq0:S_{n}>y\}$
then%
\begin{align*}
P_{x_{0}}\left(  \max\{S_{n}:n\geq0\}>y\right)   &  =P_{x_{0}}(\sigma\left(
y\right)  <\infty)\\
&  =E_{x_{0}}^{\theta_{\ast}}[1/M_{\sigma\left(  y\right)  }^{\theta_{\ast}%
};\sigma(y)<\infty]\\
&  =E_{x_{0}}^{\theta_{\ast}}\left(  \exp(-\theta_{\ast}S_{\sigma\left(
y\right)  })\frac{u_{\theta_{\ast}}\left(  x_{0}\right)  }{u_{\theta_{\ast}%
}\left(  X_{\sigma\left(  y\right)  }\right)  }\right)  .
\end{align*}
In the last equality we have used that $\sigma(y)<\infty$ a.s. with respect to
$P_{x}^{\theta_{\ast}}\left(  \cdot\right)  $. The importance sampling
estimator%
\[
Z_{y}^{\prime}=\exp(-\theta_{\ast}S_{\sigma\left(  y\right)  })\frac
{u_{\theta_{\ast}}\left(  x_{0}\right)  }{u_{\theta_{\ast}}\left(
X_{\sigma\left(  y\right)  }\right)  },
\]
obtained by sampling according to $P_{x_{0}}^{\theta_{\ast}}\left(
\cdot\right)  $ is unbiased and its second moment satisfies%
\begin{align*}
E_{x_{0}}^{\theta_{\ast}}\left(  Z_{y}^{\prime2}\right)   &  =E_{x_{0}%
}^{\theta_{\ast}}\exp(-2\theta_{\ast}S_{\sigma\left(  y\right)  }%
)\frac{u_{\theta_{\ast}}^{2}\left(  x_{0}\right)  }{u_{\theta_{\ast}}%
^{2}\left(  X_{\sigma\left(  y\right)  }\right)  }\\
&  \leq c\exp\left(  -2\theta_{\ast}y\right)  ,
\end{align*}
for some constant $c\in(0,\infty)$. Since, under our assumptions (see for
example, Asmussen (2003), Theorems 5.2 and 5.3 on Page 365) $P_{x_{0}}%
(\sigma\left(  y\right)  <\infty)=\gamma_{\ast}\exp\left(  -\theta_{\ast
}y\right)  (1+o(1))$ as $y\nearrow\infty$ for a suitable $\gamma_{\ast}%
\in(0,\infty)$ we obtain that $Z_{y}^{\prime}$ is strongly efficient for
estimating $P_{x_{0}}(\sigma\left(  y\right)  <\infty)$ as $y\nearrow\infty$.

\subsection{Standard importance sampling can lead to infinite
variance\label{Subsect_Inf_var}}

A natural approach would be to apply directly the previous change-of-measure
for estimating $T_{\Delta}<\infty$. Nevertheless, we will show by means of a
simple continuous-time example that can be fit in the discrete time setting
(through simple embedding) that such approach does not guarantee efficiency.
In fact, the estimator might even have infinite variance.

\bigskip

\textbf{Example 3}: Let $\left(  X\left(  t\right)  :t\geq0\right)  $ be
Brownian motion with negative drift $-\mu$ and unit variance. We are concerned
with $\phi\left(  \Delta\right)  =P(\int_{0}^{\infty}\exp\left(  X\left(
t\right)  \right)  dt>1/\Delta)$. In particular, here we have $\psi\left(
\theta\right)  =t^{-1}\log E\exp\left(  \theta X\left(  t\right)  \right)
=-\mu\theta+\theta^{2}/2$ (in the discrete time setting $\psi\left(
\theta\right)  $ will be $-\mu\theta+\theta^{2}/2$ multiplied by the
discretization time scale). In order to analyze the second moment of the
natural estimator described previously we need to evaluate $\theta_{\ast}$ so
that $\psi\left(  \theta_{\ast}\right)  =0$. This yields $\theta_{\ast}=2\mu$
and the resulting importance sampling algorithm proceeds by simulating
$X\left(  \cdot\right)  $ according to a Brownian motion with
\textit{positive} drift $\mu$ and unit variance up to time $T_{\Delta}%
=\inf\{t\geq0:\int_{0}^{t}\exp\left(  X\left(  s\right)  \right)
ds\geq1/\Delta\}$ and returning the estimator $Z_{\Delta}=\exp\left(
-\theta_{\ast}X\left(  T_{\Delta}\right)  \right)  $. The second moment of the
estimator is then given by $E^{\theta_{\ast}}[Z_{\Delta}^{2};T_{\Delta}%
<\infty]=E[Z_{\Delta};T_{\Delta}<\infty]$, by a change of measure back to the
original one (here we have used $E^{\theta_{\ast}}\left(  \cdot\right)  $ to
denote probability measure under which $X\left(  \cdot\right)  $ follows a
Brownian motion with drift $\mu$ and unit variance). We will show that if
$\theta_{\ast}\geq1$ then $E[\left.  \exp\left(  -\theta_{\ast}X\left(
T_{\Delta}\right)  \right)  \right\vert T_{\Delta}<\infty]=\infty$. By the
definition of $\theta_{\ast}$ we will conclude that if $\mu\geq1/2$ then
\[
E\left[  Z_{\Delta};T_{\Delta}<\infty\right]  =E[\left.  \exp\left(
-\theta_{\ast}X\left(  T_{\Delta}\right)  \right)  \right\vert T_{\Delta
}<\infty]P\left(  T_{\Delta}<\infty\right)  =\infty
\]
which implies infinite variance of the estimator. In order to prove this we
will use a result of Pollack and Siegmund (1985) (see also Dufresne (1990))
which yields that $D=\int_{0}^{\infty}\exp\left(  X\left(  t\right)  \right)
dt$ is equal in distribution to $1/Z$, where $Z$ is distributed gamma with a
density of the form%
\[
f_{Z}\left(  z\right)  =\frac{\exp\left(  -\lambda z\right)  \lambda
^{\theta_{\ast}}z^{\theta_{\ast}-1}}{\Gamma\left(  \theta_{\ast}\right)  },
\]
for some $\lambda>0$. In particular, a transformation of variable gives the
density of $D$ as%
\[
f_{D}(y)=\frac{\exp(-\lambda/y)\lambda^{\theta_{\ast}}}{\Gamma(\theta_{\ast
})y^{\theta_{\ast}+1}}%
\]
and hence%
\[
P\left(  D>1/\Delta\right)  =\int_{1/\Delta}^{\infty}\frac{\exp(-\lambda
/y)\lambda^{\theta_{\ast}}}{\Gamma(\theta_{\ast})y^{\theta_{\ast}+1}}%
dy=\frac{\exp(-\lambda\Delta)\lambda^{\theta_{\ast}}\Delta^{\theta_{\ast}}%
}{\theta_{\ast}\Gamma(\theta_{\ast})}+\int_{1/\Delta}^{\infty}\frac
{\exp(-\lambda/y)\lambda^{\theta_{\ast}+1}}{\theta_{\ast}\Gamma(\theta_{\ast
})y^{\theta_{\ast}+2}}dy
\]
where the second inequality follows from integration by parts. Note that
\[
\int_{1/\Delta}^{\infty}\frac{\exp(-\lambda/y)\lambda^{\theta_{\ast}+1}%
}{\theta_{\ast}\Gamma(\theta_{\ast})y^{\theta_{\ast}+2}}dy\leq\frac
{\lambda^{\theta_{\ast}+1}}{\theta_{\ast}\Gamma(\theta_{\ast})}\int_{1/\Delta
}^{\infty}\frac{1}{y^{\theta_{\ast}+2}}dy=\frac{\lambda^{\theta_{\ast}%
+1}\Delta^{\theta_{\ast}+1}}{\theta_{\ast}(\theta_{\ast}+1)\Gamma(\theta
_{\ast})}=O(\Delta^{\theta_{\ast}+1})
\]
as $\Delta\searrow0$. Hence $P\left(  D>1/\Delta\right)  \sim c\Delta
^{\theta_{\ast}}$ where $c=\exp(-\lambda\Delta)\lambda^{\theta_{\ast}}%
/(\theta_{\ast}\Gamma(\theta_{\ast}))$. Now, let $W$ be a random variable
equal in law to $\Delta\lbrack D-1/\Delta]$ given that $D>1/\Delta$ . Note
that conditional on $T_{\Delta}<\infty$, we have%
\begin{align*}
D  &  =\int_{0}^{\infty}\exp(X(t))dt\\
&  =\int_{0}^{T_{\Delta}}\exp(X(t))dt+\int_{T_{\Delta}}^{\infty}\exp(X(t))dt\\
&  =\frac{1}{\Delta}+\exp(X(T_{\Delta}))\int_{T_{\Delta}}^{\infty}%
\exp(X(t)-X(T_{\Delta}))dt\\
&  =\frac{1}{\Delta}+\exp(X(T_{\Delta}))D^{\prime}%
\end{align*}
where $D^{\prime}$ has the same distribution as $1/Z$ and is independent of
$\exp\left(  X\left(  T_{\Delta}\right)  \right)  $ given that $T_{\Delta
}<\infty$. We have used the strong Markov property and stationary increment
property of Brownian motion in the fourth equality. Hence the random variable
$X\left(  T_{\Delta}\right)  $ satisfies the equality in distribution%
\[
W=_{d}\Delta\exp\left(  X\left(  T_{\Delta}\right)  \right)  D^{\prime},
\]
Now, it is clear that $E[\left(  D^{\prime}\right)  ^{-\theta_{\ast}%
}]=EZ^{\theta_{\ast}}<\infty$, so it suffices to show that if $\theta_{\ast
}\geq1$ then $E\left(  W^{-\theta_{\ast}}\right)  =\infty$. Using the
definition of $W$, transformation of variable gives%
\[
f_{W}\left(  w\right)  =\frac{\Delta^{-1}\lambda^{\theta_{\ast}}}%
{\Gamma\left(  \theta^{\ast}\right)  P\left(  D>1/\Delta\right)  }\exp\left(
-\lambda/(w/\Delta+1/\Delta)\right)  (w/\Delta+1/\Delta)^{-\left(
\theta_{\ast}+1\right)  }.
\]
Therefore, we have that there exists a constant $c_{0}\in(0,\infty)$ such that%
\begin{align*}
E\left(  W^{-\theta_{\ast}}\right)   &  =\int_{0}^{\infty}f_{W}\left(
w\right)  w^{-\theta_{\ast}}dw\\
&  \geq c_{0}\int_{0}^{\infty}\Delta^{-1}\exp\left(  -\lambda/(w/\Delta
+1/\Delta)\right)  (w/\Delta+1/\Delta)^{-\left(  \theta_{\ast}+1\right)
}\left(  \Delta w\right)  ^{-\theta_{\ast}}dw\\
&  \geq c_{0}\int_{0}^{1}\exp\left(  -\lambda\Delta\right)  2^{-\left(
\theta_{\ast}+1\right)  }w^{-\theta_{\ast}}dw=\infty.
\end{align*}

\bigskip

The problem behind the natural importance sampling estimator is that one would
like the difference $[S_{T_{\Delta}}-\log(1/\Delta)]$ to stay positive, but
unfortunately, this cannot be guaranteed and in fact, this difference will
likely be negative. The idea that we shall develop in the next subsection is
to apply importance sampling just long enough to induce the rare event.

\subsection{A modified algorithm\label{modifiedalgorithm}}

We select $\theta=\theta_{\ast}$ and simulate the process according to the
procedure described in Steps 1 to 3 explained in Section 3.1 up to time
\[
T_{\Delta/a}=\inf\{n\geq0:D_{n}>a\},
\]
for some $a\in\left(  0,1\right)  $. Subsequent steps of the process $\left\{
(S_{k},D_{k},X_{k})\right\}  $, for $k>T_{\Delta/a}$ are simulated under the
nominal (original) dynamics up until $T_{\Delta}$. The resulting estimator
takes the form%
\begin{equation}
Z_{1,\Delta}=\exp\left(  -\theta_{\ast}S_{T_{\Delta/a}}\right)  \frac
{u_{\theta_{\ast}}\left(  x_{0}\right)  }{u_{\theta_{\ast}}\left(
X_{T_{\Delta/a}}\right)  }I(D_{\infty}>1). \label{Estimator1}%
\end{equation}

We will discuss the problem of implementing this estimator in a moment, in
particular the problem of sampling $I(D_{\infty}>1)$ in finite time. First we
examine its efficiency properties. We assume for simplicity that the rewards
are bounded, we will discuss how to relax this assumption right after the
proof of this result.

\bigskip

\begin{theorem}
\label{strongefficiency}In addition to Assumptions 1 and 2, suppose that there
is deterministic constant $m\in\left(  0,\infty\right)  $ such that
$\lambda\left(  x,\eta\right)  <$ $m$. Then, $Z_{1,\Delta}$ is a strongly
efficient estimator of $\phi\left(  \Delta\right)  $.
\end{theorem}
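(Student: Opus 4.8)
The plan is to show that the second moment $E_{x_0}(Z_{1,\Delta}^2)$ is $O(\phi(\Delta)^2) = O(\Delta^{2\theta_*})$, since unbiasedness of $Z_{1,\Delta}$ follows from the standard importance sampling identity (the likelihood ratio on the segment $[0,T_{\Delta/a}]$ is exactly $1/M_{T_{\Delta/a}}^{\theta_*}$, and after $T_{\Delta/a}$ we sample under the nominal law, so $E_{x_0}Z_{1,\Delta} = P_{x_0}(D_\infty>1) = \phi(\Delta)$; one should note $T_{\Delta/a}<\infty$ $P^{\theta_*}_{x_0}$-a.s. since $D_n\to\infty$ under the tilted law because $S_n/n\to\psi'(\theta_*)>0$). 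First I would write, using the change of measure back to the nominal law on the first segment,
\begin{align*}
E_{x_0}(Z_{1,\Delta}^2)
&= E_{x_0}^{\theta_*}\!\left[\exp(-2\theta_* S_{T_{\Delta/a}})\frac{u_{\theta_*}^2(x_0)}{u_{\theta_*}^2(X_{T_{\Delta/a}})}\,I(D_\infty>1)\right]\\
&= E_{x_0}\!\left[\exp(-\theta_* S_{T_{\Delta/a}})\frac{u_{\theta_*}(x_0)}{u_{\theta_*}(X_{T_{\Delta/a}})}\,I(D_\infty>1)\right],
\end{align*}
where in the second line I changed measure back on the segment $[0,T_{\Delta/a}]$ only (the factor $I(D_\infty>1)$ depends on the post-$T_{\Delta/a}$ increments, which have the nominal law under both measures, so it passes through). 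Since $u_{\theta_*}$ is bounded above and below away from $0$ on the finite state space, it suffices to bound $E_{x_0}[\exp(-\theta_* S_{T_{\Delta/a}}) I(D_\infty>1)]$ up to a constant.

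Next I would exploit the structure of the stopping time. At time $T_{\Delta/a}$ we have $D_{T_{\Delta/a}}>a$, and by the bounded-reward assumption the overshoot is controlled: $D_{T_{\Delta/a}} = D_{T_{\Delta/a}-1} + \lambda(X_{T_{\Delta/a}},\eta_{T_{\Delta/a}})\Delta\exp(S_{T_{\Delta/a}}) \le a + m\Delta\exp(S_{T_{\Delta/a}})$, hence $\Delta\exp(S_{T_{\Delta/a}}) \ge (D_{T_{\Delta/a}}-a)/m$. More to the point, on the event $\{D_\infty>1\}$ we need the remaining perpetuity mass after $T_{\Delta/a}$ to carry $D$ from above $a$ to above $1$. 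Writing $D_\infty = D_{T_{\Delta/a}} + \Delta\exp(S_{T_{\Delta/a}})\,D'$ with $D'$ an independent copy of the (scaled) perpetuity started afresh at state $X_{T_{\Delta/a}}$, the event $\{D_\infty>1\}$ forces $\Delta\exp(S_{T_{\Delta/a}})\,D' > 1-a$, i.e. $\exp(-\theta_* S_{T_{\Delta/a}}) < \big(\Delta D'/(1-a)\big)^{\theta_*}$. Therefore
\[
E_{x_0}\big[\exp(-\theta_* S_{T_{\Delta/a}}) I(D_\infty>1)\big] \le \frac{\Delta^{\theta_*}}{(1-a)^{\theta_*}}\, E_{x_0}\big[(D')^{\theta_*}; D_\infty>1\big] \le \frac{\Delta^{\theta_*}}{(1-a)^{\theta_*}}\, \sup_{x}E_x (D_x')^{\theta_*},
\]
and by Assumption 2 together with the moment bound \eqref{Mom1} established in the proof of Theorem \ref{ThmAsympt} (Goldie's framework gives finiteness of moments of $D$ of order $<\theta_*$, and one needs a small additional argument, or a slightly smaller exponent, to get order exactly $\theta_*$ — see below), this supremum is finite and independent of $\Delta$. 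Combining, $E_{x_0}(Z_{1,\Delta}^2) = O(\Delta^{\theta_*})$... but we need $O(\Delta^{2\theta_*})$.

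The gap in the crude bound above is exactly one factor of $\Delta^{\theta_*}$, and closing it is the main obstacle. The fix is to not throw away the indicator $I(T_{\Delta/a}<\infty)$ prematurely: one factor $\Delta^{\theta_*}$ comes from the inequality $\exp(-\theta_* S_{T_{\Delta/a}}) \lesssim (\Delta D')^{\theta_*}$ as above, and the second factor $\Delta^{\theta_*}$ must come from $P_{x_0}(T_{\Delta/a}<\infty) = P_{x_0}(D_\infty > a) = \Theta(\Delta^{\theta_*})$ by Theorem \ref{ThmAsympt} applied with threshold $a$ instead of $1$ (equivalently scaling $\Delta \mapsto \Delta/a$). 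Concretely, I would condition on $\mathcal F_{T_{\Delta/a}}$: given $T_{\Delta/a}<\infty$ and the state $X_{T_{\Delta/a}}=y$ and the value $S_{T_{\Delta/a}}$, the conditional probability that $D_\infty>1$ is $P_y(\Delta\exp(S_{T_{\Delta/a}})D'_y > 1 - D_{T_{\Delta/a}})$, and since $D_{T_{\Delta/a}}\in(a,a+m\Delta e^{S_{T_{\Delta/a}}})$ this is at most $P(\Delta e^{S_{T_{\Delta/a}}} D'_y > 1-a) \le (\Delta e^{S_{T_{\Delta/a}}})^{\theta_*}(1-a)^{-\theta_*}E(D'_y)^{\theta_*}$ by Markov. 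Plugging this in,
\[
E_{x_0}\big[\exp(-\theta_* S_{T_{\Delta/a}}) I(D_\infty>1)\big] = E_{x_0}\Big[\exp(-\theta_* S_{T_{\Delta/a}})\, P\big(D_\infty>1 \mid \mathcal F_{T_{\Delta/a}}\big); T_{\Delta/a}<\infty\Big] \le \frac{C\Delta^{\theta_*}}{(1-a)^{\theta_*}}\, P_{x_0}(T_{\Delta/a}<\infty),
\]
because the $\exp(-\theta_* S_{T_{\Delta/a}})$ cancels against the $(\Delta e^{S_{T_{\Delta/a}}})^{\theta_*}$ up to the factor $\Delta^{\theta_*}$. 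Finally $P_{x_0}(T_{\Delta/a}<\infty) = \phi_{(0,0,x_0)}(\Delta/a) = c_* (\Delta/a)^{\theta_*}(1+o(1)) = O(\Delta^{\theta_*})$ by Theorem \ref{ThmAsympt}, yielding $E_{x_0}(Z_{1,\Delta}^2) = O(\Delta^{2\theta_*}) = O(\phi(\Delta)^2)$, which is strong efficiency. The remaining technical points are: (i) justifying the decomposition $D_\infty = D_{T_{\Delta/a}} + \Delta e^{S_{T_{\Delta/a}}}D'$ with $D'$ independent of $\mathcal F_{T_{\Delta/a}}$ and distributed as the perpetuity started from $X_{T_{\Delta/a}}$ — this is the strong Markov property at $T_{\Delta/a}$, exactly as in \eqref{Dec1}–\eqref{Dec2}; and (ii) the finiteness $\sup_x E_x(D'_x)^{\theta_*}<\infty$, which needs care because Goldie only directly gives moments of order strictly below $\theta_*$ — but since we only used a Markov bound, it is enough to instead bound the conditional probability by $(\Delta e^{S})^{\theta_*-\epsilon}(1-a)^{-(\theta_*-\epsilon)}E(D'_y)^{\theta_*-\epsilon}$ for small $\epsilon>0$ and absorb, noting that the two $\Delta$-powers then combine to $\Delta^{\theta_*-\epsilon}\cdot\Delta^{\theta_*}$... which would only give asymptotic optimality, not strong efficiency. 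To get the clean $\theta_*$ power one uses that $E_x(D_x)^{\theta_*}<\infty$ does in fact hold here: $D$ is regularly varying of index exactly $\theta_*$ with a nondegenerate constant, but the $\theta_*$-th moment of a regularly varying-($\theta_*$) random variable can diverge logarithmically; the correct route, which I would take, is to bound the conditional probability directly by $c(\Delta e^{S})^{\theta_*}$ using that $P_y(D'_y > t) \le c\, t^{-\theta_*}$ uniformly in $y$ (a uniform upper Pareto tail bound, which follows from \eqref{Mom1} and a Chebyshev argument at exponent $\alpha>\theta_*$ combined with interpolation, or directly from Goldie's tail asymptotic made uniform over the finite state space), rather than through a moment. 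This uniform tail bound, $\sup_x P_x(D_x > t) \le c t^{-\theta_*}$ for all $t>0$, is the one nontrivial ingredient and I expect verifying it (uniformly over $\mathcal S$ and down to $t$ of order $1$) to be the main technical obstacle; everything else is the change-of-measure bookkeeping and an application of Theorem \ref{ThmAsympt} at threshold $a$.
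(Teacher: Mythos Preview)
Your overall strategy is essentially the paper's: extract one factor $\Delta^{\theta_*}$ from the conditional tail of the post-$T_{\Delta/a}$ perpetuity and the other factor from $P(T_{\Delta/a}<\infty)=O(\Delta^{\theta_*})$. The paper carries this out under $P^{\theta_*}$ rather than changing back to $P$, but the substance is the same. Your worry about $E(D')^{\theta_*}$ is well placed, and your resolution (use the uniform tail bound $\sup_x P_x(D'>t)\le c\,t^{-\theta_*}$ rather than a moment) is exactly what the paper does; that bound is immediate from Theorem~\ref{ThmAsympt} since $\mathcal S$ is finite and probabilities are at most $1$ for bounded $t$.

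There is, however, a genuine gap. In the key step you write that, since $D_{T_{\Delta/a}}\in(a,\,a+m\Delta e^{S_{T_{\Delta/a}}})$, the conditional probability $P\big(\Delta e^{S_{T_{\Delta/a}}}D'>1-D_{T_{\Delta/a}}\,\big|\,\mathcal F_{T_{\Delta/a}}\big)$ is \emph{at most} $P(\Delta e^{S_{T_{\Delta/a}}}D'>1-a)$. The inequality goes the wrong way: $D_{T_{\Delta/a}}>a$ gives $1-D_{T_{\Delta/a}}<1-a$, so the event on the left is \emph{larger}, not smaller. An upper bound on this conditional probability requires a \emph{lower} bound on $1-D_{T_{\Delta/a}}$, and the only one available is $1-D_{T_{\Delta/a}}\ge 1-a-m\Delta e^{S_{T_{\Delta/a}}}$, which may be negative (indeed $D_{T_{\Delta/a}}$ can already exceed $1$ if the overshoot is large). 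So as written the argument does not close.

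The fix is to split on the size of the overshoot, which is precisely what the paper does via its $I_1$/$I_2$ decomposition. Concretely: on $\{m\Delta e^{S_{T_{\Delta/a}}}<(1-a)/2\}$ you have $1-D_{T_{\Delta/a}}>(1-a)/2$, and your conditional bound becomes $P(D'>(1-a)/(2\Delta e^{S}))\le c\,(2\Delta e^{S}/(1-a))^{\theta_*}$, after which the $e^{\pm\theta_* S}$ cancel and you get $c\,\Delta^{\theta_*}P(T_{\Delta/a}<\infty)=O(\Delta^{2\theta_*})$ exactly as you intended. On the complementary event $\{m\Delta e^{S_{T_{\Delta/a}}}\ge (1-a)/2\}$ you simply bound $I(D_\infty>1)\le 1$ and use $e^{-\theta_* S_{T_{\Delta/a}}}\le(2m\Delta/(1-a))^{\theta_*}$ directly, again yielding $O(\Delta^{\theta_*})\cdot P(T_{\Delta/a}<\infty)=O(\Delta^{2\theta_*})$. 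With this split your proof is complete and matches the paper's.
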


\begin{proof}
It is clear that $Z_{1,\Delta}$ is unbiased. Now, define
\[
U_{n}=\exp\left(  -S_{n}\right)  \{D_{n}-a\}/\Delta
\]
for $n\geq1$. The process $\left\{  U_{n}\right\}  $ will be helpful to study
the overshoot at time $T_{\Delta/a}$. Note that%
\begin{equation}
U_{n+1}=\lambda\left(  X_{n+1},\eta_{n+1}\right)  +\exp\left(  -\gamma\left(
X_{n+1},\xi_{n+1}\right)  \right)  U_{n}, \label{DefU}%
\end{equation}
and also that we can write $T_{\Delta/a}=\inf\{n\geq0:U_{n}>0\}$.

It is important to observe that%
\begin{equation}
D_{\infty}=a+\exp\left(  S_{T_{\Delta/a}}\right)  \Delta U_{T_{\Delta/a}}%
+\exp\left(  S_{T_{\Delta/a}}\right)  D_{\infty}^{\prime}, \label{Dec3}%
\end{equation}
where $D_{\infty}^{\prime}$ is conditionally independent of $S_{T_{\Delta/a}}%
$, $U_{T_{\Delta/a}}$ given $X_{T_{\Delta/a}}$. In addition, $D_{\infty
}^{\prime}$ is obtained from the original / nominal distribution.
Decomposition (\ref{Dec3}) implies that%
\[
I(D_{\infty}>1)\leq I(\exp\left(  S_{T_{\Delta/a}}\right)  D_{\infty}^{\prime
}>(1-a)/2)+I(\exp\left(  S_{T_{\Delta/a}}\right)  \Delta U_{T_{\Delta/a}%
}>(1-a)/2),
\]
and therefore, by conditioning on $(S_{n},D_{n},X_{n})$ for $n\leq
T_{\Delta/a}$ we obtain that the second moment of $Z_{1,\Delta}$ is bounded by%
\begin{align}
&  E_{(0,0,x_{0})}^{\theta_{\ast}}[\exp\left(  -2\theta_{\ast}S_{T_{\Delta/a}%
}\right)  \frac{u_{\theta_{\ast}}\left(  x_{0}\right)  }{u_{\theta_{\ast}%
}\left(  X_{T_{\Delta/a}}\right)  }\phi_{(0,0,X_{T_{\Delta/a}})}\left(
\exp\left(  S_{T_{\Delta/a}}\right)  2\Delta/(1-a)\right)  ]\label{BD1}\\
&  \left.  +E_{(0,0,x_{0})}^{\theta_{\ast}}[\exp\left(  -2\theta_{\ast
}S_{T_{\Delta/a}}\right)  \frac{u_{\theta_{\ast}}\left(  x_{0}\right)
}{u_{\theta_{\ast}}\left(  X_{T_{\Delta/a}}\right)  }I(\exp\left(
S_{T_{\Delta/a}}\right)  \Delta U_{T_{\Delta/a}}>(1-a)/2)]\right.  .
\label{BD2}%
\end{align}
We will denote by $I_{1}$ the term in (\ref{BD1}) and by $I_{2}$ the term in
(\ref{BD2}).\ It suffices to show that both $I_{1}$ and $I_{2}$ are of order
$O\left(  \Delta^{2\theta_{\ast}}\right)  $.

Theorem \ref{ThmAsympt} guarantees the existence of a constant $c_{1}%
\in(1,\infty)$ so that%
\[
\phi_{(0,0,x_{0})}\left(  \Delta\right)  \leq c_{1}\exp\left(  \theta_{\ast
}S_{T_{\Delta/a}}\right)  \Delta^{\theta_{\ast}}/(1-a)^{\theta_{\ast}}.
\]
Using this bound inside (\ref{BD1}) we obtain that%
\begin{equation}
I_{1}\leq m_{1}\frac{\Delta^{\theta_{\ast}}}{(1-a)^{\theta_{\ast}}%
}E_{(0,0,x_{0})}^{\theta_{\ast}}[\exp\left(  -2\theta_{\ast}S_{T_{\Delta/a}%
}\right)  \frac{u_{\theta_{\ast}}\left(  x_{0}\right)  }{u_{\theta_{\ast}%
}\left(  X_{T_{\Delta/a}}\right)  }] \label{Bd2b}%
\end{equation}
for some constant $m_{1}>0$ and thus, since
\begin{equation}
E_{(0,0,x_{0})}^{\theta_{\ast}}[\exp\left(  -\theta_{\ast}S_{T_{\Delta/a}%
}\right)  \frac{u_{\theta_{\ast}}\left(  x_{0}\right)  }{u_{\theta_{\ast}%
}\left(  X_{T_{\Delta/a}}\right)  }]=\phi\left(  \Delta\right)  =O\left(
\Delta^{\theta_{\ast}}\right)  \label{IDI1}%
\end{equation}
we conclude that $I_{1}=O\left(  \Delta^{2\theta_{\ast}}\right)  $.

We now study the term $I_{2}$. Just as in the proof of Markov's inequality,
note that for any $\beta>0$%
\begin{equation}
I_{2}\leq\Delta^{\beta}\left(  \frac{2}{1-\alpha}\right)  ^{\beta
}E_{(0,0,x_{0})}^{\theta_{\ast}}[\exp\left(  -2\theta_{\ast}S_{T_{\Delta/a}%
}\right)  \exp\left(  \beta S_{T_{\Delta/a}}\right)  \frac{u_{\theta_{\ast}%
}\left(  x_{0}\right)  }{u_{\theta_{\ast}}\left(  X_{T_{\Delta/a}}\right)
}U_{T_{\Delta/a}}^{\beta}]. \label{BDI2a}%
\end{equation}
We could pick, for instance, $\beta=3\theta_{\ast}/2$ and use the fact that
$u_{\theta_{\ast}}\left(  X_{T_{\Delta/a}}\right)  \geq\delta$ for some
$\delta>0$ to obtain that
\begin{align}
&  E_{(0,0,x_{0})}^{\theta_{\ast}}[\exp\left(  -2\theta_{\ast}S_{T_{\Delta/a}%
}\right)  \exp\left(  \beta S_{T_{\Delta/a}}\right)  \frac{u_{\theta_{\ast}%
}\left(  x_{0}\right)  }{u_{\theta_{\ast}}\left(  X_{T_{\Delta/a}}\right)
}U_{T_{\Delta/a}}^{\beta}]\label{BDI2b}\\
&  \leq\frac{u_{\theta_{\ast}}\left(  x_{0}\right)  }{\delta}\left(
E_{(0,0,x_{0})}^{\theta_{\ast}}[\exp\left(  -\theta_{\ast}S_{T_{\Delta/a}%
}\right)  ]\right)  ^{1/2}(E_{(0,0,x_{0})}^{\theta_{\ast}}[U_{T_{\Delta/a}%
}^{2\beta}])^{1/2}.\nonumber
\end{align}
If we are able to show that
\begin{equation}
E_{(0,0,x_{0})}^{\theta_{\ast}}(U_{T_{\Delta/a}}^{2\beta})=O\left(  1\right)
\label{BDI2c}%
\end{equation}
as $\Delta\searrow0$, then we an conclude, owing to (\ref{IDI1}), that the
right hand side of (\ref{BDI2b}) is of order $O\left(  \Delta^{\theta_{\ast
}/2}\right)  $. Thus, combining this bound on (\ref{BDI2b}), together with
(\ref{BDI2a}) we would conclude that $I_{2}=O\left(  \Delta^{2\theta_{\ast}%
}\right)  $ as required. It suffices then to verify (\ref{BDI2c}), however,
this is immediate since under our current assumptions we clearly have that
$U_{T_{\Delta/a}}\leq\lambda\left(  X_{T_{\Delta/a}},\eta_{T_{\Delta/a}%
}\right)  \leq m$.
\end{proof}

\bigskip

We shall comment on two important issues behind this result. First, we have
assumed that the rewards are bounded in order to simplify our analysis. Note
that the only place that used this assumption is in establishing
(\ref{BDI2c}). It is possible to estimate the expectation in (\ref{BDI2c})
only under Assumption 3 using a Lyapunov bound similar to the one that we will
discuss in Lemma \ref{LemLI}.

Second, the estimator $Z_{1,\Delta}$ is unbiased only if we can generate
$D_{\infty}$ in a finite time. Generating unbiased samples from $D_{\infty}$
under our current assumptions is not straightforward (see for example Diaconis
and Freedman (1999) on issues related to steady-state distributions for
iterated random functions, and Blanchet and Sigman (2011) for algorithms that
can be used to sample $D_{\infty}$ under assumptions close to the ones that we
impose here). Alternatively, one might recognize that $D_{\infty}$ is the
steady-state distribution of a suitably defined Markov chain. In the presence
of enough regeneration structure, one can replace the indicator in
(\ref{Estimator1}) by an estimator for the tail of $D_{\infty}$ based on the
corresponding regenerative ratio representation. Note that this replacement
would involve a routine simulation problem as there is no need to estimate any
rare event. However, once again after using a regenerative-ratio based
estimator one introduces bias.

We shall not pursue more discussion on any of the two issues raised given that
the class of estimators that we shall discuss in the next section are not only
unbiased but are also asymptotically optimal as $\Delta\longrightarrow0$ and
can be rigorously shown to have a running time that grows at most
logarithmically in $1/\Delta$.

\section{State-Dependent Importance Sampling\label{SectSDIS_Generalities}}

An issue that was left open in the previous section was that the estimator
that we constructed is biased from a practical standpoint. In this section, we
illustrate how to construct an efficient importance sampling estimator that
terminates in finite time and is unbiased. The estimator based on applying
state-independent importance sampling up until time $T_{\Delta}$ has been seen
to be inefficient. Examples of changes-of-measure that look reasonable from a
large deviations perspective but at the end turn out to have a poor
performance are well known in the rare-event simulation literature (see
Glasserman and Kou (1995)). It is interesting that estimating the tail of
$D_{\infty}$ provides yet another such example. These types of examples have
motivated the development of the theory behind the design of efficient
state-dependent importance sampling estimators, which is the basis behind the
construction of our estimator here. We shall explain some of the elements
behind this theory next.

We will follow the approach based on Lyapunov inequalities (see Blanchet and
Glynn (2008) and Blanchet, Glynn and Liu (2007)). Let us introduce some
notation for $W_{n}=\left(  S_{n},D_{n},X_{n}\right)  $. The transition kernel
associated to $W$ is denoted by $Q\left(  \cdot\right)  $, so%
\[
P_{w_{0}}\left(  W_{1}\in A\right)  =P\left(  \left.  W_{1}\in A\right\vert
W_{0}=w_{0}\right)  =\int_{A}Q\left(  w_{0},dw\right)  .
\]
A state-dependent importance sampling distribution for $W$ is described by the
Markov transition kernel%
\begin{equation}
Q_{r}\left(  w_{0},dw_{1}\right)  =r\left(  w_{0},w_{1}\right)  ^{-1}Q\left(
w_{0},dw_{1}\right)  , \label{KerR}%
\end{equation}
where $r\left(  \cdot\right)  $ is a positive function properly normalized so
that%
\[
\int Q_{r}\left(  w_{0},dw_{1}\right)  =\int r\left(  w_{0},w_{1}\right)
^{-1}Q\left(  w_{0},dw_{1}\right)  =1.
\]
The idea behind the Lyapunov method is to introduce a parametric family of
changes-of-measure. As we shall see, in our case, this will correspond to
suitably defined exponential changes-of-measure. This selection specifies
$r\left(  \cdot\right)  $. The associated importance sampling estimator, which
is obtained by sampling transitions from $Q_{r}\left(  \cdot\right)  $, takes
the form%
\[
Z_{\Delta}=r\left(  W_{0},W_{1}\right)  r\left(  W_{1},W_{2}\right)
...r\left(  W_{T_{\Delta}-1},W_{T_{\Delta}}\right)  I\left(  T_{\Delta}%
<\infty\right)  .
\]
Using $P_{w}^{\left(  r\right)  }\left(  \cdot\right)  $ (resp.\ $E_{w}%
^{\left(  r\right)  }\left(  \cdot\right)  $) to denote the probability
measure (resp.\ the expectation operator) induced by the transition kernel
$Q_{r}\left(  \cdot\right)  $ given that $W_{0}=w$, we can express the second
moment of $Z$ via%
\[
v_{\Delta}\left(  w\right)  =E_{w}^{\left(  r\right)  }Z_{\Delta}^{2}%
=E_{w}Z_{\Delta}.
\]
Note that conditioning on the first transition of the process $W$ one obtains%
\[
v_{\Delta}\left(  w\right)  =E_{w}[r\left(  w,W_{1}\right)  v_{\Delta}\left(
W_{1}\right)  ],
\]
subject to the boundary condition $v_{\Delta}\left(  w\right)  =1$ for
$w\in\mathbb{R}\times\lbrack1,\infty)\times\mathcal{S}$. We are interested in
a suitable upper bound for $v_{\Delta}\left(  w\right)  $, which can be
obtained by taking advantage of the following inequality proved in Blanchet
and Glynn (2008).

\begin{lemma}
\label{LemLI}If $h_{\Delta}\left(  \cdot\right)  $ is non-negative and
satisfies%
\begin{equation}
E_{w}[r\left(  w,W_{1}\right)  h_{\Delta}\left(  W_{1}\right)  ] \leq
h_{\Delta}\left(  w\right)  \label{LI}%
\end{equation}
subject to $h_{\Delta}\left(  w\right)  \geq1$ for $w\in\mathbb{R}%
\times\lbrack1,\infty)\times\mathcal{S}$, then
\[
v_{\Delta}\left(  w\right)  \leq h_{\Delta}\left(  w\right)  .
\]

\end{lemma}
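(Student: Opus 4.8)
The plan is to verify the Lyapunov inequality (\ref{LI}) is exactly the hypothesis needed to run a supermartingale/iteration argument on the second-moment function $v_{\Delta}$. First I would recall from the discussion preceding the lemma that, by conditioning on the first transition of $W$ under the importance sampling measure $Q_{r}$ and using the likelihood-ratio identity $E_{w}^{(r)}Z_{\Delta}^{2}=E_{w}Z_{\Delta}$, the function $v_{\Delta}$ satisfies the recursion $v_{\Delta}(w)=E_{w}[r(w,W_{1})v_{\Delta}(W_{1})]$ off the boundary set $\mathcal{B}=\mathbb{R}\times[1,\infty)\times\mathcal{S}$, with $v_{\Delta}(w)=1$ on $\mathcal{B}$. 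The key observation is that $v_{\Delta}$ is the \emph{minimal} non-negative solution of this system: indeed $v_{\Delta}(w)=E_{w}Z_{\Delta}$ with $Z_{\Delta}\ge 0$, and $Z_{\Delta}=r(W_{0},W_{1})\cdots r(W_{T_{\Delta}-1},W_{T_{\Delta}})I(T_{\Delta}<\infty)$ is precisely the product of likelihood ratios accumulated until absorption in $\mathcal{B}$.

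Next I would introduce, for a fixed $h_{\Delta}$ satisfying (\ref{LI}) and $h_{\Delta}\ge 1$ on $\mathcal{B}$, the process $L_{n}=r(W_{0},W_{1})\cdots r(W_{n-1},W_{n})\,h_{\Delta}(W_{n})$ for $n<T_{\Delta}$, stopped at $T_{\Delta}$. The inequality (\ref{LI}) says exactly that $(L_{n\wedge T_{\Delta}})$ is a non-negative supermartingale under the \emph{nominal} measure $P_{w}$: conditioning on $W_{n}$ and applying (\ref{LI}) at the point $W_{n}$ gives $E_{w}[L_{n+1}\mid \mathcal{F}_{n}]\le L_{n}$ on $\{T_{\Delta}>n\}$. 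Hence $h_{\Delta}(w)=L_{0}\ge E_{w}L_{n\wedge T_{\Delta}}\ge E_{w}[L_{T_{\Delta}}I(T_{\Delta}\le n)]=E_{w}[\,r(W_{0},W_{1})\cdots r(W_{T_{\Delta}-1},W_{T_{\Delta}})\,h_{\Delta}(W_{T_{\Delta}})\,I(T_{\Delta}\le n)\,]$. Since $W_{T_{\Delta}}\in\mathcal{B}$ on $\{T_{\Delta}<\infty\}$, we have $h_{\Delta}(W_{T_{\Delta}})\ge 1$ there, so the last expression is at least $E_{w}[\,r(W_{0},W_{1})\cdots r(W_{T_{\Delta}-1},W_{T_{\Delta}})\,I(T_{\Delta}\le n)\,]$. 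Letting $n\to\infty$ and invoking monotone convergence (all terms are non-negative) yields $h_{\Delta}(w)\ge E_{w}Z_{\Delta}=v_{\Delta}(w)$, which is the claim.

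The only mild subtlety — and the step I would be most careful about — is the measurability/integrability bookkeeping: one must check that each $L_{n\wedge T_{\Delta}}$ is integrable under $P_{w}$ so that the supermartingale inequality is legitimate, and that the truncation at $n$ followed by monotone convergence is valid. Integrability follows because $h_{\Delta}$ is non-negative and (\ref{LI}) iterated gives $E_{w}L_{n\wedge T_{\Delta}}\le h_{\Delta}(w)<\infty$; non-negativity of $r$ and of $h_{\Delta}$ makes the monotone convergence step immediate. No assumption on finiteness of $T_{\Delta}$ is needed, since on $\{T_{\Delta}=\infty\}$ the contribution to $Z_{\Delta}$ is zero by the indicator. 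This is essentially the argument of Blanchet and Glynn (2008), reproduced here because the boundary set and the kernel $Q_{r}$ are the specific ones relevant to the perpetuity problem.
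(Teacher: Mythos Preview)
Your proposal is correct and is precisely the standard supermartingale argument from Blanchet and Glynn (2008); the paper itself does not give a proof of this lemma but merely cites that reference, so your argument is exactly the one being invoked. The only cosmetic remark is that the lemma does not assume $h_{\Delta}(w)<\infty$, but as you implicitly use, the conclusion is trivial at any $w$ where $h_{\Delta}(w)=\infty$, so one may restrict attention to finite starting values.
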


\bigskip

Our strategy in state-dependent importance sampling is aligned with the
intuition behind the failure of the natural state-independent importance
sampling strategy described in the previous section; it consists in applying
importance sampling only when it is \textquotedblleft safe\textquotedblright%
\ to apply it. In other words, we wish to induce $T_{\Delta}<\infty$ by
exponentially tilting the increments of $S$, but we want to be careful and
maintain the likelihood ratio appropriately controlled. So, for instance,
cases where $D_{n}$ might be close to the boundary value $1$, but $S_{n}$ is
significantly smaller than $\log(1/\Delta)$ are of concern. In those cases, we
shall turn off importance sampling to avoid the accumulation of a large
likelihood ratio in $Z_{\Delta}$. In summary, suppose that the current
position of the cumulative discount rate process $S$ is given by $s$ and that
the position of the discounted process $D$ is $d$. We shall continue applying
exponential tilting as long as $(s,d,x)$ belongs to some region $C$ where it
is safe to apply importance sampling. We do not apply importance sampling if
$(s,d,x)\notin C$. The precise definition of the set $C$ will be given momentarily.

Using the notation introduced earlier leading to the statement of our Lyapunov
inequality in Lemma \ref{LemLI} we can describe the sampler as follows. Let
$C$ be an appropriately defined subset of $\mathbb{R\times R}\times
\mathcal{S}$. Assume that the current state of the process $W$ is
$w_{0}=(s_{0},d_{0},x_{0})$ and let us write $w_{1}=(s_{1},d_{1},y)$ for a
given outcome of the next transition. The function $r\left(  \cdot\right)
\triangleq r_{\theta_{\ast}}\left(  \cdot\right)  $ introduced in (\ref{KerR})
takes the form
\begin{align}
&  r_{\theta_{\ast}}^{-1}\left(  (s_{0},d_{0},x_{0}),(s_{1},d_{1},y)\right)
\label{Drth}\\
&  =I\left(  (s_{0},d_{0},x_{0})\in C\right)  \frac{u_{\theta_{\ast}}\left(
y\right)  }{u_{\theta_{\ast}}\left(  x_{0}\right)  }\exp\left(  \theta_{\ast
}(s_{1}-s_{0})\right) \nonumber\\
&  +I\left(  (s_{0},d_{0},x_{0})\notin C\right)  .\nonumber
\end{align}

The construction of an appropriate Lyapunov function $h_{\Delta}\left(
\cdot\right)  $ involves applying Lemma \ref{LemLI}. In turn, the definition
of the set $C$ is coupled with the construction of $h_{\Delta}\left(
\cdot\right)  $. We shall construct $h_{\Delta}\left(  \cdot\right)  $ so that
$h_{\Delta}\left(  s,d,x\right)  =1$ implies $(s,d,x)\notin C$. Moreover, we
shall impose the condition $h_{\Delta}\left(  \cdot\right)  \in\lbrack0,1]$.
Assuming $h_{\Delta}$ can be constructed in this way we immediately have that
the Lyapunov inequality is satisfied outside $C$. We then need to construct
$h_{\Delta}$ on $C$. We wish to find an asymptotically optimal
change-of-measure, so it makes sense to propose
\[
h_{\Delta}\left(  s,d,x\right)  =O(P_{\left(  s,d,x\right)  }\left(
T_{\Delta}<\infty\right)  ^{2-\rho_{\Delta}}),
\]
where $\rho_{\Delta}\searrow0$ as $\Delta\searrow0$ (recall the definition of
asymptotic optimality given in the Introduction). On the other hand, we have
that%
\begin{align}
P_{\left(  s,d,x\right)  }\left(  T_{\Delta}<\infty\right)   &  =P_{(0,0,x)}%
\left(  d+\exp\left(  s\right)  \Delta D_{\infty}>1\right) \nonumber\\
&  =P_{(0,0,x)}\left(  D_{\infty}>\exp\left(  -s\right)  \left(  \frac
{1-d}{\Delta}\right)  \right) \nonumber\\
&  \approx\exp\left(  s\theta_{\ast}\right)  [\Delta/(1-d)]^{\theta_{\ast}}.
\label{App1}%
\end{align}
Motivated by the form of this approximation, which is expected to hold at
least in logarithmic sense as $\exp(-s)[(1-d)/\Delta]\longrightarrow\infty$,
we suggest a Lyapunov function of the form
\[
h_{\Delta}\left(  s,d,x\right)  =\min\{c_{\Delta}^{2\theta_{\ast}-\rho
_{\Delta}}\exp\left(  [2\theta_{\ast}-\rho_{\Delta}]s\right)  [\Delta
/(1-d)_{+}]^{2\theta_{\ast}-\rho_{\Delta}}u_{\theta_{\ast}}\left(  x\right)
u_{\theta_{\ast}-\rho_{\Delta}}\left(  x\right)  ,1\}.
\]
The introduction of the function $u_{\theta_{\ast}}\left(  x\right)
u_{\theta_{\ast}-\rho_{\Delta}}\left(  x\right)  $ in a multiplicative form as
given above is convenient for the purpose of verifying Lyapunov inequalities
for importance sampling in the setting of Markov random walks (see Blanchet,
Leder and Glynn (2009)). The constant $c_{\Delta}>0$, which will be specified
in the verification of the Lyapunov inequality, is introduced as an extra
degree of freedom to recognize that approximation (\ref{App1}) may not be
exact. The exponent on top of $c_{\Delta}$ allows to make the estimates in the
verification of the Lyapunov inequality somewhat cleaner. Note that we have
$h_{\Delta}\left(  \cdot\right)  \in\lbrack0,1]$ and the set $C$ is defined
via%
\begin{equation}
C=\{(s,d,x):h_{\Delta}\left(  s,d,x\right)  <1\}. \label{DefC}%
\end{equation}
We do not apply importance sampling whenever we reach a state $(s,d,x)$
satisfying $h_{\Delta}\left(  s,d,x\right)  =1$.

We shall close this section with a precise description of our state-dependent
algorithm. The following procedure generates one sample of our estimator.

\textbf{State-Dependent Algorithm}

\begin{enumerate}
\item[Step 1:] Set $\rho_{\Delta}=1/\log(1/\Delta)$ and $c_{\Delta}%
=(B_{2}/B_{1})\rho_{\Delta}^{-(1+1/(2\theta^{\ast}-\rho_{\Delta}))}$ with
$0<B_{1},B_{2}<\infty$ as indicated in Proposition \ref{PropLIa} below.
Initialize $(s,d,x)\leftarrow(0,0,x_{0})$.

\item[Step 2:] Initialize likelihood ratio $L\leftarrow1$.

\item[Step 3:] While at $(s,d,x)$, do the following:

\begin{enumerate}
\item[1.] If $(s,d,x)\in C$ defined in \eqref{DefC} i.e. $h_{\Delta}(s,d,x)<1$,

\begin{enumerate}
\item Generate $X_{1}$ from the kernel
\[
K_{\theta_{\ast}}(x,y)=K(x,y)\exp(\chi(y,\theta_{\ast})-\psi(\theta_{\ast
}))\frac{u_{\theta_{\ast}}(y)}{u_{\theta_{\ast}}(x)}.
\]
Say we have realization $X_{1}=y$.

\item Given $X_{1}=y$, sample $\gamma(y,\xi_{1})$ from the exponential
tilting
\[
P^{\theta_{\ast}}(\gamma(y,\xi_{1})\in dz)=\exp(\theta_{\ast}z-\chi
(y,\theta_{\ast}))P(\gamma(y,\xi_{1})\in dz).
\]
Say we have $\gamma(y,\xi_{1})=z$.

\item Sample $\lambda(y,\eta_{1})$ from the nominal distribution of $\eta_{1}$
given $X_{1}=y$ and $\gamma\left(  y,\xi_{1}\right)  =z$.\newline Say we have
$\lambda(y,\eta_{1})=w$.

\item Update
\[
L\leftarrow L\times\exp(-\theta_{\ast}z)\frac{u_{\theta_{\ast}}(x)}%
{u_{\theta_{\ast}}(y)}.
\]

\end{enumerate}

Else if $(s,d,x)\notin C$ i.e. $h_{\Delta}(s,d,x)=1$,

\begin{enumerate}
\item Sample $X_{1}$ from its nominal distribution. Given $X_{1}=y$, sample
$\gamma(y,\xi_{1})$ and $\lambda(y,\eta_{1})$ from their nominal
distributions. Say the realizations are $\gamma(y,\xi_{1})=y$ and
$\lambda(y,\eta_{1})=w$.
\end{enumerate}

\item[2.] Update
\[
(s,d,x)\leftarrow(s+z,d+\Delta w\exp(s+z),y).
\]

\item[3.] If $d>1$, output $L$ and stop; else repeat the loop.
\end{enumerate}
\end{enumerate}

\bigskip

The variance analysis of the unbiased estimator $L$ as well as the termination
time of the algorithm are given in the next sections.

\section{Efficiency of State-Dependent Importance
Sampling\label{SectSDIS_Efficiency}}

In order to verify asymptotic optimality of $L$ we first show that $h_{\Delta
}\left(  \cdot\right)  $ satisfies the Lyapunov inequality given in Lemma
\ref{LemLI}. We have indicated that the inequality is satisfied outside $C$.
On the other hand, one clearly has that $h_{\Delta}\left(  s,d,x\right)  =1$
for $d\geq1$, so the boundary condition given in Lemma \ref{LemLI} is
satisfied. Consequently, in order to show that $h_{\Delta}\left(
\cdot\right)  $ is a valid Lyapunov function and that $v_{\Delta}\left(
w\right)  \leq h_{\Delta}\left(  w\right)  $ we just have to show the
following proposition.


\begin{proposition}
\label{PropLIa}Suppose that Assumptions 1 to 3 are in force and select
$b_{0}<\infty$ such that $0<1/[\inf_{\theta\in\left(  0,\theta_{\ast}\right)
x\in\mathcal{S}}u_{\theta}(x)]^{2}\leq b_{0}$.
\end{proposition}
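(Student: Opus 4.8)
The proposition's conclusion (not displayed in the excerpt) should be that, for a suitable choice of the constants $0<B_{1},B_{2}<\infty$ entering $c_{\Delta}$, the function $h_{\Delta}$ satisfies the Lyapunov inequality (\ref{LI}) for the kernel $Q_{r_{\theta_{\ast}}}$; I would then invoke Lemma \ref{LemLI} to get $v_{\Delta}(w)\le h_{\Delta}(w)$, and at $w=(0,0,x_{0})$ this reads $v_{\Delta}(0,0,x_{0})\le c_{\Delta}^{2\theta_{\ast}-\rho_{\Delta}}\Delta^{2\theta_{\ast}-\rho_{\Delta}}u_{\theta_{\ast}}(x_{0})u_{\theta_{\ast}-\rho_{\Delta}}(x_{0})$; since $\rho_{\Delta}=1/\log(1/\Delta)$ we have $\Delta^{-\rho_{\Delta}}=e$ and $c_{\Delta}^{2\theta_{\ast}-\rho_{\Delta}}=O((\log(1/\Delta))^{2\theta_{\ast}+1})$, so $v_{\Delta}(0,0,x_{0})=O(\Delta^{2\theta_{\ast}-\varepsilon})$ for every $\varepsilon>0$, which with Theorem \ref{ThmAsympt} is asymptotic optimality. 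So the whole content is the Lyapunov inequality. Write $\kappa=2\theta_{\ast}-\rho_{\Delta}$ and $g_{\Delta}(s,d,x)=c_{\Delta}^{\kappa}e^{\kappa s}[\Delta/(1-d)_{+}]^{\kappa}u_{\theta_{\ast}}(x)u_{\theta_{\ast}-\rho_{\Delta}}(x)$, so that $h_{\Delta}=\min\{g_{\Delta},1\}$ and $C=\{g_{\Delta}<1\}$ by (\ref{DefC}). The inequality is immediate when $d_{0}\ge 1$ (there $h_{\Delta}(w_{0})=1$, which is also the boundary condition of Lemma \ref{LemLI}) and when $w_{0}\notin C$ with $d_{0}<1$ (there $r_{\theta_{\ast}}(w_{0},\cdot)\equiv 1$ by (\ref{Drth}), $h_{\Delta}(w_{0})=1$, and $E_{w_{0}}h_{\Delta}(W_{1})\le 1$ since $h_{\Delta}\le 1$). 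So I would concentrate on $w_{0}=(s_{0},d_{0},x_{0})\in C$, where $h_{\Delta}(w_{0})=g_{\Delta}(w_{0})$. Put $t_{0}=e^{s_{0}}\Delta/(1-d_{0})$, $u_{0}=(u_{\theta_{\ast}}(x_{0})u_{\theta_{\ast}-\rho_{\Delta}}(x_{0}))^{1/\kappa}$, $u_{1}=(u_{\theta_{\ast}}(X_{1})u_{\theta_{\ast}-\rho_{\Delta}}(X_{1}))^{1/\kappa}$, and abbreviate $\Gamma=\gamma(X_{1},\xi_{1})$, $\Lambda=\lambda(X_{1},\eta_{1})$; then $1-D_{1}=(1-d_{0})(1-t_{0}\Lambda e^{\Gamma})$, and a short computation using $\psi(\theta_{\ast})=0$ gives the cancellations $g_{\Delta}(w_{0})=(c_{\Delta}t_{0}u_{0})^{\kappa}$ and, on $\{D_{1}<1\}$, $r_{\theta_{\ast}}(w_{0},W_{1})g_{\Delta}(W_{1})=u_{\theta_{\ast}}(x_{0})c_{\Delta}^{\kappa}t_{0}^{\kappa}e^{(\theta_{\ast}-\rho_{\Delta})\Gamma}u_{\theta_{\ast}-\rho_{\Delta}}(X_{1})(1-t_{0}\Lambda e^{\Gamma})^{-\kappa}$, while $r_{\theta_{\ast}}(w_{0},W_{1})=(u_{\theta_{\ast}}(x_{0})/u_{\theta_{\ast}}(X_{1}))e^{-\theta_{\ast}\Gamma}$.

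The plan is to split $E_{w_{0}}[r_{\theta_{\ast}}(w_{0},W_{1})h_{\Delta}(W_{1})]$ into the contribution of $\{W_{1}\in C\}$ (the ``main term'', where $h_{\Delta}(W_{1})=g_{\Delta}(W_{1})$) and that of $\{W_{1}\notin C\}$ (where $h_{\Delta}(W_{1})=1$). By the product identity above and $g_{\Delta}(w_{0})=(c_{\Delta}t_{0}u_{0})^{\kappa}$, the main term equals $g_{\Delta}(w_{0})\,u_{\theta_{\ast}-\rho_{\Delta}}(x_{0})^{-1}E_{w_{0}}[e^{(\theta_{\ast}-\rho_{\Delta})\Gamma}u_{\theta_{\ast}-\rho_{\Delta}}(X_{1})(1-t_{0}\Lambda e^{\Gamma})^{-\kappa};W_{1}\in C]$, and on dropping the restriction $W_{1}\in C$ and the factor $(1-t_{0}\Lambda e^{\Gamma})^{-\kappa}$ the eigenfunction identity (\ref{Eigen1}) at $\theta=\theta_{\ast}-\rho_{\Delta}$ — namely $E_{x_{0}}[e^{(\theta_{\ast}-\rho_{\Delta})\Gamma}u_{\theta_{\ast}-\rho_{\Delta}}(X_{1})]=e^{\psi(\theta_{\ast}-\rho_{\Delta})}u_{\theta_{\ast}-\rho_{\Delta}}(x_{0})$ — turns this into $g_{\Delta}(w_{0})e^{\psi(\theta_{\ast}-\rho_{\Delta})}$. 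By Assumption 2 the function $\psi$ is finite, hence real-analytic, past $\theta_{\ast}$, with $\psi'(\theta_{\ast})>0$ by strict convexity (cf.\ the positive drift of $S_{n}$ under $P_{x_{0}}^{\theta_{\ast}}$ in Section 3.1), so $e^{\psi(\theta_{\ast}-\rho_{\Delta})}=1-\psi'(\theta_{\ast})\rho_{\Delta}+O(\rho_{\Delta}^{2})<1$: this is the slack that must absorb everything else. I would then show that reinstating $(1-t_{0}\Lambda e^{\Gamma})^{-\kappa}$ and the restriction $W_{1}\in C$ costs only $o(\rho_{\Delta})g_{\Delta}(w_{0})$. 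Split $\{W_{1}\in C\}$ according to whether $t_{0}\Lambda e^{\Gamma}\le\epsilon_{0}\rho_{\Delta}$ or not, with $\epsilon_{0}$ a fixed constant with $\kappa\epsilon_{0}<\psi'(\theta_{\ast})/2$. On $\{t_{0}\Lambda e^{\Gamma}\le\epsilon_{0}\rho_{\Delta}\}$ the factor is at most $(1-\epsilon_{0}\rho_{\Delta})^{-\kappa}=1+O(\rho_{\Delta})$, so that part of the main term is at most $(1-\epsilon_{0}\rho_{\Delta})^{-\kappa}e^{\psi(\theta_{\ast}-\rho_{\Delta})}\le 1-(\psi'(\theta_{\ast})/2)\rho_{\Delta}+O(\rho_{\Delta}^{2})$ times $g_{\Delta}(w_{0})$. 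On the complement one uses that $W_{1}\in C$ forces $c_{\Delta}u_{1}t_{0}e^{\Gamma}<1-t_{0}\Lambda e^{\Gamma}\le 1$, whence $(1-t_{0}\Lambda e^{\Gamma})^{-\kappa}\le(c_{\Delta}u_{1}t_{0}e^{\Gamma})^{-\kappa}$, while $t_{0}\Lambda e^{\Gamma}>\epsilon_{0}\rho_{\Delta}$ forces $e^{-\theta_{\ast}\Gamma}\le(t_{0}\Lambda/(\epsilon_{0}\rho_{\Delta}))^{\theta_{\ast}}$; substituting both, and $\kappa-\theta_{\ast}=\theta_{\ast}-\rho_{\Delta}$, collapses the integrand, and a Markov bound on $\Lambda e^{\Gamma}$ with an exponent $\beta\in(\theta_{\ast},\bar{\theta})$ for which $\psi(\bar{\theta})<\infty$ (Assumption 2), together with $E[\Lambda^{\theta_{\ast}+\beta}e^{\beta\Gamma}]<\infty$ (H\"{o}lder, using Assumption 3 for all moments of $\lambda$), bounds this part of the main term; using $c_{\Delta}=(B_{2}/B_{1})\rho_{\Delta}^{-(1+1/\kappa)}$ and $c_{\Delta}t_{0}u_{0}<1$ on $C$, the resulting quantity is $O(\rho_{\Delta}^{p})$ with $p>1$, uniformly over $w_{0}\in C$, hence negligible against the slack.

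For the $\{W_{1}\notin C\}$ contribution one has $h_{\Delta}(W_{1})=1$ and $r_{\theta_{\ast}}(w_{0},W_{1})=(u_{\theta_{\ast}}(x_{0})/u_{\theta_{\ast}}(X_{1}))e^{-\theta_{\ast}\Gamma}$, and I would split according to whether $D_{1}\ge 1$ or $D_{1}<1$. If $D_{1}\ge 1$ then $\Lambda e^{\Gamma}\ge 1/t_{0}$, so $e^{-\theta_{\ast}\Gamma}\le(t_{0}\Lambda)^{\theta_{\ast}}$; if $W_{1}\notin C$ while $D_{1}<1$ then $W_{1}\notin C$ forces $e^{\Gamma}\ge(t_{0}(c_{\Delta}u_{1}+\Lambda))^{-1}$, so $e^{-\theta_{\ast}\Gamma}\le(t_{0}(c_{\Delta}u_{1}+\Lambda))^{\theta_{\ast}}$; in either case the same Markov/moment estimate bounds the contribution by a constant times $(c_{\Delta}t_{0})^{\theta_{\ast}+\beta}$, i.e.\ (as $\theta_{\ast}+\beta>\kappa$) by a constant times $g_{\Delta}(w_{0})(c_{\Delta}t_{0})^{\beta-\theta_{\ast}+\rho_{\Delta}}$. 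This is at most $(\psi'(\theta_{\ast})/6)\rho_{\Delta}g_{\Delta}(w_{0})$ once $c_{\Delta}t_{0}$ lies below a threshold of order $\rho_{\Delta}^{1/(\beta-\theta_{\ast})}$, i.e.\ $w_{0}$ not too close to $\partial C$, and then the inequality (\ref{LI}) closes there against the main-term bound $\le(1-(\psi'(\theta_{\ast})/3)\rho_{\Delta})g_{\Delta}(w_{0})$. For $w_{0}$ with $c_{\Delta}t_{0}$ above that threshold, the defining inequality of $C$ now forces, on $\{W_{1}\in C\}$, $e^{\Gamma}$ to stay below a bound that is at most a constant times $\rho_{\Delta}^{-1/(\beta-\theta_{\ast})}$, so in the main term $E_{w_{0}}[e^{(\theta_{\ast}-\rho_{\Delta})\Gamma}u_{\theta_{\ast}-\rho_{\Delta}}(X_{1});W_{1}\in C]$ falls below $e^{\psi(\theta_{\ast}-\rho_{\Delta})}u_{\theta_{\ast}-\rho_{\Delta}}(x_{0})$ by a power of $c_{\Delta}t_{0}$ (estimate the truncated tail by $E[e^{(\theta_{\ast}-\rho_{\Delta})\Gamma};\Gamma>y]\le e^{-(\beta-\theta_{\ast}+\rho_{\Delta})y}E[e^{\beta\Gamma}]$), and this extra reduction dominates $(c_{\Delta}t_{0})^{\beta-\theta_{\ast}+\rho_{\Delta}}$, so (\ref{LI}) closes in that regime too. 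Choosing $\epsilon_{0}$, then $\beta$, then $B_{1},B_{2}$ so that the two regimes match at the threshold — with $b_{0}$ entering only through uniform lower bounds on $u_{0},u_{1}$ and the constants these produce — completes the verification of $E_{w_{0}}[r_{\theta_{\ast}}(w_{0},W_{1})h_{\Delta}(W_{1})]\le h_{\Delta}(w_{0})$ for all $w_{0}\in C$ and all $\Delta$ below some $\Delta_{0}$, and hence the proposition.

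I expect the last point — a single estimate valid uniformly over $w_{0}\in C$ — to be the main obstacle: every error term (the $(1-t_{0}\Lambda e^{\Gamma})^{-\kappa}$ correction, the overshoot $D_{1}\ge 1$, the exit $W_{1}\notin C$) must be tracked as a function of $c_{\Delta}t_{0}\asymp g_{\Delta}(w_{0})^{1/\kappa}$, and the merely $O(\rho_{\Delta})$ slack available deep inside $C$ — which comes from the light-tailed Cram\'{e}r/eigenvalue side, i.e.\ the smoothness of $\psi$ past $\theta_{\ast}$ — must be traded off smoothly against the $O(1)$ slack available near $\partial C$, all while every error term stays dominated. That this works at all relies on the heavy-tailed side supplying all polynomial moments of $\lambda$ (Assumption 3) and on the scaling $c_{\Delta}\propto\rho_{\Delta}^{-(1+1/\kappa)}$ being polynomial (not exponential) in $1/\rho_{\Delta}$; this interplay is exactly the mixed light-and-heavy-tailed feature emphasized in the Introduction.
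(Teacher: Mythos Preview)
Your overall architecture is right: the inequality is trivial off $C$, and on $C$ everything rides on the eigenfunction identity (\ref{Eigen1}) at $\theta_{\ast}-\rho_{\Delta}$ producing the factor $e^{\psi(\theta_{\ast}-\rho_{\Delta})}=1-\psi'(\theta_{\ast})\rho_{\Delta}+O(\rho_{\Delta}^{2})$, and this $O(\rho_{\Delta})$ slack must absorb the remaining error terms via Markov/moment estimates using Assumptions 2 and 3. That is exactly the paper's mechanism.

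Where you diverge from the paper is in the \emph{choice of split}, and this is what generates the difficulty you flag at the end. You split first on $\{W_{1}\in C\}$ versus $\{W_{1}\notin C\}$, and this forces you into a two-regime argument in $c_{\Delta}t_{0}$ because the bound on the $\{W_{1}\notin C\}$ piece, of order $(c_{\Delta}t_{0})^{\beta-\theta_{\ast}+\rho_{\Delta}}g_{\Delta}(w_{0})$, does \emph{not} vanish uniformly over $C$. Your proposed fix --- that near $\partial C$ the constraint $W_{1}\in C$ truncates $\Gamma$ and so the main term drops by a matching power of $c_{\Delta}t_{0}$ --- is plausible but not established: the tail you cut off from $E[e^{(\theta_{\ast}-\rho_{\Delta})\Gamma};\,e^{\Gamma}\ge 1/(c_{\Delta}t_{0}m)]$ is indeed $O((c_{\Delta}t_{0})^{\beta-\theta_{\ast}+\rho_{\Delta}})$, but so is the term you need it to absorb, and you have not shown the constants close (nor that the $(1-t_{0}\Lambda e^{\Gamma})^{-\kappa}$ factor, which is no longer uniformly $1+O(\rho_{\Delta})$ in this regime, does not spoil things). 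This is a genuine gap as written.

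The paper sidesteps all of this with a cleaner split. It does \emph{not} condition on $\{W_{1}\in C\}$; instead it splits on the single event $A=\{t_{0}\Lambda e^{\Gamma}\ge a_{\Delta}\}$ with $a_{\Delta}=B_{1}\rho_{\Delta}$ (your $\epsilon_{0}\rho_{\Delta}$). On $A$ it uses only $h_{\Delta}\le 1$, so $J_{1}=E_{x}[L_{\theta_{\ast}};A]$, and one Markov step with exponent $\theta_{\ast}-\rho_{\Delta}$ gives $J_{1}/h_{\Delta}(w_{0})\le b_{0}b_{2}/(a_{\Delta}c_{\Delta})^{\kappa}$ \emph{uniformly in $w_{0}\in C$}. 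On $A^{c}$ it uses $h_{\Delta}\le g_{\Delta}$ (valid everywhere, including $\{W_{1}\notin C\}$), and since $A^{c}$ forces $t_{0}\Lambda e^{\Gamma}<a_{\Delta}$ one has $(1-t_{0}\Lambda e^{\Gamma})^{-\kappa}\le(1-a_{\Delta})^{-\kappa}$ and the eigenfunction identity at $\theta_{\ast}-\rho_{\Delta}$ gives $J_{2}/h_{\Delta}(w_{0})\le(1-a_{\Delta})^{-\kappa}e^{\psi(\theta_{\ast}-\rho_{\Delta})}$, again uniform in $w_{0}$. Substituting $a_{\Delta}=B_{1}\rho_{\Delta}$ and $c_{\Delta}=(B_{2}/B_{1})\rho_{\Delta}^{-(1+1/\kappa)}$ turns the sum into the explicit expression in condition iii), which is $\le 1$ by choice of $B_{1},B_{2}$. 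No boundary-layer analysis, no second regime: the uniformity over $C$ falls out automatically because neither $J_{1}/h_{\Delta}$ nor $J_{2}/h_{\Delta}$ depends on $w_{0}$ after the bounds. The lesson is that the ``right'' dichotomy is small-reward versus large-reward ($A^{c}$ vs.\ $A$), not $W_{1}\in C$ versus $W_{1}\notin C$; once you use $h_{\Delta}\le g_{\Delta}$ on $A^{c}$ regardless of where $W_{1}$ lands, the whole near-boundary difficulty evaporates.
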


\textit{i) Select }$b_{1}>0$\textit{ such that for each }$\delta\in
(0,\theta_{\ast})$\textit{ }%
\[
\exp\left(  \psi\left(  \theta_{\ast}-\delta\right)  \right)  \leq1-\delta
\mu+b_{1}\delta^{2}%
\]
\textit{where }$\mu=d\psi(\theta_{\ast})/d\theta>0$\textit{.}

\textit{ii) Pick }$b_{2}\in(0,\infty)$\textit{ such that}%
\[
\sup_{x\in\mathcal{S},\delta\in(0,\theta_{*})}E_{x}[\lambda(X_{1},\eta
_{1})^{2\theta_{\ast}-\delta}\exp((\theta_{\ast}-\delta)\gamma(X_{1},\xi
_{1}))]\leq b_{2}%
\]

\textit{and make the following selection of $B_{1}$, $B_{2}$, $\rho_{\Delta}$
and $c_{\Delta}$:}

\textit{iii) Select }$0<B_{1},B_{2}<\infty$\textit{ and }$\rho_{\Delta
},c_{\Delta}>0$\textit{ so that }$\rho_{\Delta}\searrow0$, $\rho_{\Delta}%
\in(0,\theta_{\ast})$, $c_{\Delta}=(B_{2}/B_{1})\rho_{\Delta}^{-(1+1/(2\theta
_{\ast}-\rho_{\Delta}))}$, $B_{1}\rho_{\Delta}<1$ \textit{and}\newline%
\[
\frac{b_{0}b_{2}\rho_{\Delta}}{B_{2}^{2\theta_{\ast}-\rho_{\Delta}}}%
+\frac{(1-\rho_{\Delta}\mu+b_{1}\rho_{\Delta}^{2})}{(1-B_{1}\rho_{\Delta
})^{(2\theta_{\ast}-\rho_{\Delta})}}\leq1.
\]

\textit{Then, }$h_{\Delta}\left(  \cdot\right)  $\textit{ satisfies the
Lyapunov inequality (\ref{LI}) on }$C$\textit{ assuming that }$r\left(
\cdot\right)  =r_{\theta_{\ast}}\left(  \cdot\right)  $\textit{ is given as in
(\ref{Drth}).}

\begin{proof}
First, $b_{0}$ is finite because $u_{\theta_{\ast}}(\cdot)$ is strictly
positive and $\mathcal{S}$ is finite. The fact that the selections in i) and
iii) are always possible follows from straightforward Taylor series
developments. The selection of $b_{2}$ in ii) is possible because of
Assumptions 2 and 3 combined with Holder's inequality. We shall prove the
concluding statement in the result using the selected values in i), ii) and
iii). Assume that $(s,d,x)$ is such that $h_{\Delta}\left(  s,d,x\right)  <1$.
To ease the notation we write $\gamma_{1}=\gamma\left(  X_{1},\xi_{1}\right)
$ and $\lambda_{1}=\lambda\left(  X_{1},\eta_{1}\right)  $. We need to show
that%
\begin{align}
&  E_{x}h_{\Delta}\left(  s+\gamma_{1},d+\exp\left(  s+\gamma_{1}\right)
\Delta\lambda_{1},X_{1}\right)  L_{\theta_{\ast}}[X_{1},\gamma_{1}%
]\label{E1}\\
&  \leq h_{\Delta}\left(  s,d,x\right)  ,\nonumber
\end{align}
where%
\[
L_{\theta_{\ast}}[X_{1},\gamma_{1}]=\frac{u_{\theta_{\ast}}\left(  x\right)
}{u_{\theta_{\ast}}\left(  X_{1}\right)  }\exp\left(  -\theta_{\ast}\gamma
_{1}\right)  .
\]
We divide the expectation in (\ref{E1}) into two parts, namely, transitions
that lie in a region that corresponds to the complement of $C$ and transitions
that lie within $C$. To be precise, set $a_{\Delta}\in(0,1)$ and put%
\[
A=\{\exp\left(  \gamma_{1}\right)  \lambda_{1}\geq a_{\Delta}\exp\left(
-s\right)  (1-d)/\Delta\}
\]
and write $A^{c}$ for the complement of $A$. The expectation in (\ref{E1}) is
then equal to $J_{1}+J_{2}$, where%
\begin{align*}
J_{1}  &  =E_{x}(h_{\Delta}\left(  s+\gamma_{1},d+\exp\left(  s+\gamma
_{1}\right)  \Delta\lambda_{1},X_{1}\right)  L_{\theta_{\ast}}[X_{1}%
,\gamma_{1}];A),\\
J_{2}  &  =E_{x}(h_{\Delta}\left(  s+\gamma_{1},d+\exp\left(  s+\gamma
_{1}\right)  \Delta\lambda_{1},X_{1}\right)  L_{\theta_{\ast}}[X_{1}%
,\gamma_{1}];A^{c}).
\end{align*}
We first analyze $J_{1}/h_{\Delta}\left(  s,d,x\right)  $. Note that%
\begin{align*}
\frac{J_{1}}{h_{\Delta}\left(  s,d,x\right)  }  &  \leq\frac{E_{x}%
(L_{\theta_{\ast}}[X_{1},\gamma_{1}];A)}{h_{\Delta}\left(  s,d,x\right)  }\\
&  \leq\frac{u_{\theta_{\ast}}\left(  x\right)  }{\inf_{y\in\mathcal{S}%
}u_{\theta_{\ast}}\left(  y\right)  }\times\frac{E_{x}[\exp\left(
-\theta_{\ast}\gamma_{1}\right)  ;A]}{h_{\Delta}\left(  s,d,x\right)  }.
\end{align*}
Now we note that%
\begin{align*}
&  E_{x}\left[  \exp\left(  -\theta_{\ast}\gamma_{1}\right)  ;A\right] \\
&  =E_{x}(\exp\left(  -\theta_{\ast}\gamma_{1}\right)  ;\exp\left(
-\gamma_{1}\right)  \leq\lambda_{1}\Delta\exp(s)/[(1-d)a_{\Delta}])\\
&  \leq\left(  \frac{\Delta}{1-d}\right)  ^{\theta_{\ast}}\frac{\exp
(\theta_{\ast}s)}{a_{\Delta}^{\theta_{\ast}}}\\
&  \times E_{x}\left[  \lambda_{1}^{\theta_{\ast}};\lambda_{1}\geq\exp
(-\gamma_{1}-s)a_{\Delta}(1-d)/\Delta\right]  .
\end{align*}
Moreover, applying Markov's inequality we obtain that for each $\beta>0$%
\begin{align*}
&  E_{x}\left[  \lambda_{1}^{\theta_{\ast}};\lambda_{1}\Delta\exp(\gamma
_{1}+s)/[a_{\Delta}(1-d)]\geq1\right] \\
&  \leq\frac{\Delta^{\beta}\exp(\beta s)}{[a_{\Delta}(1-d)]^{\beta}}%
E_{x}\left[  \lambda_{1}^{\theta_{\ast}+\beta}\exp(\beta\gamma_{1})\right]  .
\end{align*}
Selecting $\beta=\theta_{\ast}-\rho_{\Delta}$ we obtain (using ii))%
\begin{align*}
\frac{J_{1}}{h_{\Delta}\left(  s,d,x\right)  }  &  \leq\left(  \frac{\Delta
}{1-d}\right)  ^{2\theta_{\ast}-\rho_{\Delta}}\frac{\exp(\left(  2\theta
_{\ast}-\rho_{\Delta}\right)  s)}{a_{\Delta}^{2\theta_{\ast}-\rho_{\Delta}}%
}\frac{u_{\theta_{\ast}}\left(  x\right)  E_{x}[\lambda_{1}^{2\theta_{\ast
}-\rho_{\Delta}}\exp((\theta_{\ast}-\rho_{\Delta})\gamma_{1})]}{\inf
_{y\in\mathcal{S}}u_{\theta_{\ast}}\left(  y\right)  h_{\Delta}\left(
s,d,x\right)  }\\
&  \leq\frac{b_{2}}{\left(  a_{\Delta}c_{\Delta}\right)  ^{2\theta_{\ast}%
-\rho_{\Delta}}\inf_{y\in\mathcal{S}}u_{\theta_{\ast}}\left(  y\right)
\inf_{y\in\mathcal{S}}u_{\theta_{\ast}-\rho_{\Delta}}\left(  y\right)  }%
\leq\frac{b_{2}b_{0}}{\left(  a_{\Delta}c_{\Delta}\right)  ^{2\theta_{\ast
}-\rho_{\Delta}}}.
\end{align*}
To analyze $J_{2}$ we note that on%
\[
A^{c}=\{\exp(\gamma_{1}+s)\lambda_{1}\Delta/(1-d)<a_{\Delta}\}
\]
we have that%
\begin{align*}
&  \frac{h_{\Delta}\left(  s+\gamma_{1},d+\exp\left(  s+\gamma_{1}\right)
\Delta\lambda_{1},X_{1}\right)  }{h_{\Delta}\left(  s,d,x\right)  }\\
&  =\exp((2\theta_{\ast}-\rho_{\Delta})\gamma_{1})\left(  1-\frac{\Delta
\exp(s+\gamma_{1})\lambda_{1}}{1-d}\right)  ^{-(2\theta_{\ast}-\rho_{\Delta}%
)}\frac{u_{\theta_{\ast}}\left(  X_{1}\right)  u_{\theta_{\ast}-\rho_{\Delta}%
}\left(  X_{1}\right)  }{u_{\theta_{\ast}}\left(  x\right)  u_{\theta_{\ast
}-\rho_{\Delta}}\left(  x\right)  }\\
&  \leq\exp((2\theta_{\ast}-\rho_{\Delta})\gamma_{1})\left(  1-a_{\Delta
}\right)  ^{-(2\theta_{\ast}-\rho_{\Delta})}\frac{u_{\theta_{\ast}}\left(
X_{1}\right)  u_{\theta_{\ast}-\rho_{\Delta}}\left(  X_{1}\right)  }%
{u_{\theta_{\ast}}\left(  x\right)  u_{\theta_{\ast}-\rho_{\Delta}}\left(
x\right)  }.
\end{align*}
Therefore, we have that%
\begin{align*}
&  \frac{J_{2}}{h\left(  s,d,x\right)  }\\
&  \leq(1-a_{\Delta})^{-(2\theta_{\ast}-\rho_{\Delta})}E_{x}\left(
\exp((2\theta_{\ast}-\rho_{\Delta})\gamma_{1})\frac{u_{\theta_{\ast}}\left(
X_{1}\right)  u_{\theta_{\ast}-\rho_{\Delta}}\left(  X_{1}\right)  }%
{u_{\theta_{\ast}}\left(  x\right)  u_{\theta_{\ast}-\rho_{\Delta}}\left(
x\right)  }L_{\theta_{\ast}}[X_{1},\gamma_{1}]\right) \\
&  =(1-a_{\Delta})^{-(2\theta_{\ast}-\rho_{\Delta})}E_{x}\left(  \exp
((\theta_{\ast}-\rho_{\Delta})\gamma_{1})\frac{u_{\theta_{\ast}-\rho_{\Delta}%
}\left(  X_{1}\right)  }{u_{\theta_{\ast}-\rho_{\Delta}}\left(  x\right)
}\right) \\
&  =(1-a_{\Delta})^{-(2\theta_{\ast}-\rho_{\Delta})}\exp\left(  \psi\left(
\theta_{\ast}-\rho_{\Delta}\right)  \right)  .
\end{align*}
Recall that we have assumed in ii) that $b_{1}$ is selected so that%
\[
\exp\left(  \psi\left(  \theta_{\ast}-\rho_{\Delta}\right)  \right)
\leq1-\rho_{\Delta}\mu+b_{1}\rho_{\Delta}^{2}.
\]
Thus we obtain%
\[
\frac{J_{2}}{h\left(  s,d,x\right)  }\leq(1-a_{\Delta})^{-(2\theta_{\ast}%
-\rho_{\Delta})}(1-\rho_{\Delta}\mu+b_{1}\rho_{\Delta}^{2}).
\]
Combining our estimates for $J_{1}$ and $J_{2}$ together we arrive at%
\begin{align*}
&  \frac{J_{1}}{h\left(  s,d,x\right)  }+\frac{J_{2}}{h\left(  s,d,x\right)
}\\
&  \leq\frac{b_{0}b_{2}}{\left(  a_{\Delta}c_{\Delta}\right)  ^{2\theta_{\ast
}-\rho_{\Delta}}}+\frac{(1-\rho_{\Delta}\mu+b_{1}\rho_{\Delta}^{2}%
)}{(1-a_{\Delta})^{(2\theta_{\ast}-\rho_{\Delta})}}.
\end{align*}
Let $a_{\Delta}=B_{1}\rho_{\Delta}$ and $c_{\Delta}=(B_{2}/B_{1})\rho_{\Delta
}^{-(1+1/(2\theta^{\ast}-\rho_{\Delta}))}$ for $\rho_{\Delta}<\theta_{\ast}$,
substitute in the previous inequality and conclude that%
\begin{align*}
&  \frac{J_{1}}{h\left(  s,d,x\right)  }+\frac{J_{2}}{h\left(  s,d,x\right)
}\\
&  \leq\frac{b_{0}b_{2}\rho_{\Delta}}{B_{2}^{2\theta_{\ast}-\rho_{\Delta}}%
}+\frac{(1-\rho_{\Delta}\mu+b_{1}\rho_{\Delta}^{2})}{(1-B_{1}\rho_{\Delta
})^{(2\theta_{\ast}-\rho_{\Delta})}}\leq1,
\end{align*}
where the previous inequality follows by the selection of $B_{1}$ and $B_{2}$
in Assumption iii). This concludes the proof of the proposition.
\end{proof}

\bigskip

The next result summarizes the asymptotic optimality properties of the
algorithm obtained out of the previous development.

\begin{theorem}
\label{ThmMain1}Select $\rho_{\Delta}=1/\log(1/\Delta)$ and $c_{\Delta}%
=(B_{2}/B_{1})\rho_{\Delta}^{-(1+1/(2\theta^{\ast}-\rho_{\Delta}))}$ with
$0<B_{1},B_{2}<\infty$ as indicated in Proposition \ref{PropLIa}. Then, the
resulting estimator $L$ obtained by the State-dependent Algorithm has a
coefficient of variation of order $O(c_{\Delta}^{2\theta_{\ast}})$ and
therefore, in particular, it is asymptotically optimal.
\end{theorem}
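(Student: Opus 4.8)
The plan is to combine the Lyapunov bound from Proposition \ref{PropLIa} with Lemma \ref{LemLI} and the asymptotic $\phi(\Delta) = c_\ast \Delta^{\theta_\ast}(1 + o(1))$ from Theorem \ref{ThmAsympt}. First I would invoke Proposition \ref{PropLIa}, which—together with the observations made just before its statement (namely that $h_\Delta(s,d,x) = 1$ on $\{d \geq 1\}$, so the boundary condition of Lemma \ref{LemLI} holds, and that the Lyapunov inequality (\ref{LI}) is automatic outside $C$)—shows that $h_\Delta(\cdot)$ is a genuine Lyapunov function. Lemma \ref{LemLI} then yields $v_\Delta(w) = E_w^{(r)} Z_\Delta^2 \leq h_\Delta(w)$ for every state $w$. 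Evaluating at the initial state $w = (0,0,x_0)$ and recalling that $h_\Delta$ is capped at $1$, we get
\[
E_{(0,0,x_0)}^{(r)} L^2 = v_\Delta(0,0,x_0) \leq h_\Delta(0,0,x_0) \leq c_\Delta^{2\theta_\ast - \rho_\Delta} \Delta^{2\theta_\ast - \rho_\Delta} u_{\theta_\ast}(x_0) u_{\theta_\ast - \rho_\Delta}(x_0).
\]

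Next I would control the two $\Delta$-dependent nuisance factors, $\Delta^{-\rho_\Delta}$ and $c_\Delta^{-\rho_\Delta}$ (relative to the target rate $\Delta^{2\theta_\ast}$). With the choice $\rho_\Delta = 1/\log(1/\Delta)$ we have $\Delta^{-\rho_\Delta} = \exp(\rho_\Delta \log(1/\Delta)) = e$, a harmless constant. Since $c_\Delta = (B_2/B_1)\rho_\Delta^{-(1 + 1/(2\theta_\ast - \rho_\Delta))}$ grows only polynomially in $\log(1/\Delta)$, we have $c_\Delta^{\rho_\Delta} = \exp(\rho_\Delta \log c_\Delta) \to 1$ as $\Delta \searrow 0$, so $c_\Delta^{-\rho_\Delta}$ is also bounded. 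The factor $u_{\theta_\ast - \rho_\Delta}(x_0)$ stays bounded (above and below) by continuity of $\theta \mapsto u_\theta(\cdot)$ and finiteness of $\mathcal{S}$. Hence $E_{(0,0,x_0)}^{(r)} L^2 = O(c_\Delta^{2\theta_\ast} \Delta^{2\theta_\ast})$. Combining with $\phi(\Delta) \sim c_\ast \Delta^{\theta_\ast}$ from Theorem \ref{ThmAsympt}, unbiasedness $E_{(0,0,x_0)}^{(r)} L = \phi(\Delta)$, and the definition of the coefficient of variation
\[
\mathrm{CV}^2 = \frac{E^{(r)} L^2 - (E^{(r)} L)^2}{(E^{(r)} L)^2} \leq \frac{E^{(r)} L^2}{\phi(\Delta)^2} = O(c_\Delta^{2\theta_\ast}),
\]
we obtain the claimed bound. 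Asymptotic optimality follows because $c_\Delta^{2\theta_\ast}$ grows only polylogarithmically, hence $c_\Delta^{2\theta_\ast} = O(\Delta^{-\varepsilon})$ for every $\varepsilon > 0$, so $E^{(r)} L^2 = O(\phi(\Delta)^{2-\varepsilon})$ for all $\varepsilon > 0$.

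The main obstacle—and really the only substantive point beyond bookkeeping—is verifying that $h_\Delta$ actually satisfies the hypotheses of Lemma \ref{LemLI} \emph{globally}, not just on $C$ where Proposition \ref{PropLIa} does the work. The subtlety is that the state-dependent kernel $r_{\theta_\ast}(\cdot)$ from (\ref{Drth}) applies exponential tilting on $C$ and the identity (likelihood ratio $1$) off $C$; one must check that on the complement of $C$ the inequality $E_w[r(w,W_1)h_\Delta(W_1)] \leq h_\Delta(w)$ reduces to $E_w[h_\Delta(W_1)] \leq 1 = h_\Delta(w)$, which holds trivially since $h_\Delta \leq 1$. I would also need to be careful that transitions from inside $C$ can land in $\{d \geq 1\}$ (where the boundary condition $h_\Delta \geq 1$ is needed), which is why $h_\Delta$ is defined with the $(1-d)_+$ truncation and the $\min\{\cdot, 1\}$ cap—so the Lyapunov inequality of Proposition \ref{PropLIa} as stated already encompasses those transitions. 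Once this patching is in place, the rest is the elementary arithmetic with $\rho_\Delta$ and $c_\Delta$ sketched above.
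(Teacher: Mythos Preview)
Your proposal is correct and follows exactly the route the paper takes: the paper's own proof is a one-liner stating that the result is an immediate consequence of $h_{\Delta}$ being a valid Lyapunov function (i.e.\ Proposition~\ref{PropLIa} plus the pre-stated observations about $C^{c}$ and the boundary $\{d\geq 1\}$) combined with Theorem~\ref{ThmAsympt}. You have simply unpacked the arithmetic with $\rho_{\Delta}$, $c_{\Delta}$, and $u_{\theta_{\ast}-\rho_{\Delta}}(x_{0})$ that the paper leaves implicit.
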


\begin{proof}
The result follows as an immediate consequence of the fact that $h_{\Delta}$
is a valid Lyapunov function combined with Theorem \ref{ThmAsympt}.
\end{proof}

\bigskip

\section{Unbiasedness and Logarithmic Running Time of State-Dependent
Sampler\label{SectSDIS_Unbias_RunTime}}

Throughout the rest of our development, in addition to Assumptions 1 to 3, we
impose the following mild technical assumption.

\bigskip

\textbf{Assumption 4: }For each $x\in\mathcal{S}$, $Var\left(  \gamma\left(
x,\xi\right)  \right)  >0$.

\bigskip

The previous assumption simply says that $\gamma\left(  x,\xi\right)  $ is
random. The assumption is immediately satisfied (given Assumption 2) in the
i.i.d. case. As we shall explain a major component in our algorithm is the
construction of a specific path that leads to termination. Assumption 4 is
imposed in order to rule out a cyclic type behavior under the importance
sampling distribution.

We shall show that the state-dependent algorithm stops with probability one
(and hence avoids artificial termination which causes bias, a potential
problem with the algorithm in Section 3.3) and that the expected termination
time is of order $(\log(1/\Delta))^{p}$ for some $p>0$. To do so let us
introduce some convenient notation. Let $Z_{n}\triangleq(1-D_{n})e^{-S_{n}}$,
and put $Y_{n}=(X_{n},Z_{n})$ for $n\geq0$. The dynamics of the process
$Y=(Y_{n}:n\geq0)$ are such that
\[
Y_{n+1}=(X_{n+1},Z_{n}e^{-\gamma(X_{n+1},\xi_{n+1})}-\Delta\lambda
(X_{n+1},\eta_{n+1})).
\]
It is then easy to see that our state-dependent algorithm, which
mathematically is described by equations (\ref{Drth}) and (\ref{DefC}), can be
stated in the following way in terms of $Y_{n}$: Given $Y_{n}$,

\begin{itemize}
\item Apply exponential tilting to $\gamma(X_{n+1},\xi_{n+1})$ (using the
tilting parameter $\theta_{\ast}$) if
\[
Z_{n}>c_{\Delta}\Delta u_{\theta_{*}}(X_{n})^{1/(2\theta_{\ast}-\rho_{\Delta
})}u_{\theta_{*}-\rho_{\Delta}}(X_{n})^{1/(2\theta_{*}-\rho_{\Delta})}%
\]
.

\item Transition according to the nominal distribution of the system if
\[
0<Z_{n}\leq c_{\Delta}\Delta u_{\theta_{*}}(X_{n})^{1/(2\theta_{\ast}%
-\rho_{\Delta})}u_{\theta_{*}-\rho_{\Delta}}(X_{n})^{1/(2\theta_{*}%
-\rho_{\Delta})}.
\]

\item Terminate if\ $Z_{n}\leq0$.
\end{itemize}

Note that the region when $Z_{n}>c_{\Delta}\Delta u_{\theta_{*}}%
(X_{n})^{1/(2\theta_{\ast}-\rho_{\Delta})}u_{\theta_{*}-\rho_{\Delta}}%
(X_{n})^{1/(2\theta_{*}-\rho_{\Delta})}$ corresponds to the region $C$ (recall
equation (\ref{DefC}) in Section 4). If $0<Z_{n}\leq c_{\Delta}\Delta
u_{\theta_{*}}(X_{n})^{1/(2\theta_{\ast}-\rho_{\Delta})}u_{\theta_{*}%
-\rho_{\Delta}}(X_{n})^{1/(2\theta_{*}-\rho_{\Delta})}$ we say that $Y_{n}$ is
in $C^{\prime}$. Finally, we say that $Y_{n}$ is in $B$ if $Z_{n}\leq0$. In
other words, the set $B$ is the termination set. Let us write $m=\inf
_{x\in\mathcal{S}}\{u_{\theta_{*}}(x)^{1/(2\theta_{\ast}-\rho_{\Delta}%
)}u_{\theta_{*}-\rho_{\Delta}}(x)^{1/(2\theta_{*}-\rho_{\Delta})}\}$ and
$M=\sup_{x\in\mathcal{S}}\{u_{\theta_{*}}(x)^{1/(2\theta_{\ast}-\rho_{\Delta
})}u_{\theta_{*}-\rho_{\Delta}}(x)^{1/(2\theta_{*}-\rho_{\Delta})}\}$. Note
that $0<m<M<\infty$. A key observation is that the set $C^{\prime}$ is
bounded. This will help in providing bounds for the running time of the
algorithm as we shall see.

We will obtain an upper bound for the algorithm by bounding the time spent by
the process in both regions $C$ and $C^{\prime}$. Intuitively, starting from
an initial position in $C$, the process moves to $C^{\prime}$ in finite number
of steps. Then the process moves to either $C$ or $B$. If the process enters
$C$ before $B$, then from $C$ it again moves back to $C^{\prime}$ and the
iteration between region $C^{\prime}$ and $C$ repeats until the process
finally hits $B$, which is guaranteed to happen by geometric trial argument.
Our proof below will make the intuition rigorous and shows that the time for
the process to travel each back-and-forth between $C$ and $C^{\prime}$ is
logarithmic in $1/\Delta$, and that there is a significant probability that
the process starting from $C^{\prime}$ hits $B$ before $C$. This, overall,
will imply a logarithmic running time of the algorithm. More precisely, we
will show the following lemmas. The reader should keep in mind the selections
\[
\rho_{\Delta}=1/\log(1/\Delta)\text{ \ and \ }c_{\Delta}=(B_{2}/B_{1}%
)\rho_{\Delta}^{-(1+1/(2\theta^{\ast}-\rho_{\Delta}))}=O\left(  \log
(1/\Delta)^{1+1/(2\theta^{\ast})}\right)
\]
given in Theorem \ref{ThmMain1}.

Recall the notations $P^{\theta_{*}}(\cdot)$ and $E^{\theta_{*}}[\cdot]$ to
denote the probability measure and expectation under the state-dependent
importance sampler. Note that under $P^{\theta_{*}}(\cdot)$ no exponential
tilting is performed when the current state $Y_{n}$ lies in $C^{\prime}$.

\begin{lemma}
Denote $T_{C\cup B}=\inf\{n>0:Y_{n}\in C\cup B\}$. Under Assumptions 1 to 4 we
have
\[
E_{y_{0}}^{\theta_{\ast}}[T_{C\cup B}]=O(c_{\Delta}^{p}\log c_{\Delta})
\]
uniformly over $y_{0}\in C^{\prime}$ for some constant $p>0$. \label{lemma2}
\end{lemma}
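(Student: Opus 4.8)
The plan is to bound $E_{y_0}^{\theta_*}[T_{C\cup B}]$ by analyzing the dynamics of $Z_n$ while the process sits in $C'$. Starting from $y_0=(x_0,z_0)\in C'$ we have $0<z_0\le c_\Delta\Delta M$, and under $P^{\theta_*}(\cdot)$ the increments of $S$ are \emph{not} tilted while in $C'$, so the process evolves according to the nominal dynamics
\[
Z_{n+1}=Z_n e^{-\gamma(X_{n+1},\xi_{n+1})}-\Delta\lambda(X_{n+1},\eta_{n+1}).
\]
Since under the nominal law $E\gamma(X_\infty,\xi_1)<0$ (recorded just after \eqref{Etheta*2}), the multiplicative factors $e^{-\gamma}$ have positive drift in the logarithm, so the term $Z_n e^{-\gamma}$ tends to grow geometrically; meanwhile the subtracted reward is of order $\Delta$. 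Thus from any starting point in $C'$ the process either is pushed below $0$ (absorbed in $B$) within $O(\log(1/\Delta))$ steps by an unfavorable (large negative) $\gamma$-increment together with the $-\Delta\lambda$ subtraction, or it climbs multiplicatively until $Z_n$ exceeds the threshold $c_\Delta\Delta u_{\theta_*}(X_n)^{1/(2\theta_*-\rho_\Delta)}u_{\theta_*-\rho_\Delta}(X_n)^{1/(2\theta_*-\rho_\Delta)}$ and re-enters $C$. Because $C'$ is bounded — its $Z$-coordinate lies in $(0,c_\Delta\Delta M]$ and $c_\Delta\Delta=O(\Delta\log(1/\Delta)^{1+1/(2\theta_*)})$ — the number of multiplicative steps needed to exit $C'$ (in either direction) is $O(\log(c_\Delta\Delta)/|{\rm something}|)$, i.e. $O(\log(1/\Delta))$, by a standard first-passage estimate for the random walk $\log Z_n$.

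The steps I would carry out are as follows. First, reduce to the random walk $V_n=\log Z_n$ (valid while $Z_n>0$): its increments are $-\gamma(X_{n+1},\xi_{n+1})+\log(1-\Delta\lambda(X_{n+1},\eta_{n+1})e^{\gamma(X_{n+1},\xi_{n+1})}/Z_n)$, and on $C'$ the correction term is controlled because $Z_n$ is bounded below only by $0$ — this is precisely where one must be careful, see below. Second, cover $C'$ by showing that from any $y_0\in C'$ there is a probability bounded away from $0$, \emph{uniformly in} $\Delta$ and $y_0$, that within a single step (or a bounded number of steps) $Z$ drops below $0$: indeed one large negative value of $\gamma$ makes $e^{-\gamma}$ huge, but since we are subtracting $\Delta\lambda$, what actually produces absorption is a \emph{small} $Z_n$ combined with a reward $\lambda$ of size at least $Z_n/(\Delta e^{\gamma})$ — this has probability bounded below only when $Z_n$ is already of order $\Delta$. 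So more honestly: partition $C'$ into the ``near-$B$'' strip $\{0<Z_n\le c\Delta\}$ and the ``bulk'' $\{c\Delta<Z_n\le c_\Delta\Delta M\}$; in the bulk the log-process behaves like a random walk with nonzero variance (Assumption 4) and negative drift in $V_n$ is impossible, in fact it has positive drift $\approx -E\gamma>0$, so it exits the bulk upward into $C$ in $O(\log c_\Delta)$ steps; in the near-$B$ strip there is uniformly positive probability of hitting $B$ in one step. Third, combine via Wald / optional-stopping-type estimates: $E_{y_0}^{\theta_*}[T_{C\cup B}]\le (\text{time to exit bulk})+(\text{geometric number of excursions through the near-}B\text{ strip})\times(\text{length of each excursion})$, each factor being $O(\log c_\Delta)$ or $O(1)$, giving the claimed $O(c_\Delta^p\log c_\Delta)$ (in fact one expects $p$ can be taken very small, the polynomial slack absorbing crude constants).

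The main obstacle is the behavior of the log-process near the termination boundary $Z_n=0$: the map $z\mapsto ze^{-\gamma}-\Delta\lambda$ is not a clean random walk in $\log z$ when $z$ is comparable to $\Delta\lambda$, and one cannot simply ignore the $-\Delta\lambda$ term. The way I would handle this is to avoid taking logarithms in that regime entirely: instead work directly with $Z_n$ on the strip $\{0<Z_n\le K\Delta\}$ for a suitable constant $K$ (note this strip is where $C'$ meets $B$), show by a direct one-step computation using Assumptions 1, 3 and 4 that $P_{y}^{\theta_*}(Z_1\le 0)\ge \delta_0>0$ uniformly over $y$ in this strip and uniformly in $\Delta$ — this uses that $\gamma$ is genuinely random (Assumption 4) so $e^{-\gamma}$ is bounded above with positive probability, keeping $Z_1 e^{-\gamma}$ of order $\Delta$, combined with $\lambda$ having arbitrarily large moments (Assumption 3) hence nondegenerate mass at values forcing $Z_1\le 0$. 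Outside that strip the logarithmic change of variable is harmless and standard Markov-modulated random-walk first-passage bounds (with the positive drift $-E\gamma(X_\infty,\xi_1)>0$) finish the argument. Assumption 4 is essential precisely to rule out a degenerate/cyclic $\gamma$ that would let the process orbit forever around $C'$ without ever crossing into $B$ or $C$.
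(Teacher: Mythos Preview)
Your approach is genuinely different from the paper's and does not quite go through as written.

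\textbf{What the paper does.} The paper never analyzes the drift of $\log Z_n$. Instead it constructs, for each starting state $y_0=(x_0,z_0)\in C'$, an \emph{explicit deterministic path} $A(x_0)$ of length $O(\log c_\Delta)$ that drives $Z$ below $0$. The path repeatedly visits a distinguished transition $r_1\to r_2$ at which $\lambda(r_2,\eta)\ge u_3>0$ with positive probability, while simultaneously keeping $e^{-\gamma_i}$ in a controlled window $[u_2,\,u_1\cdot(\text{eigenfunction ratio})]$ (the upper bound coming from the eigenvalue equation and Assumption~4, the lower bound trivially). Because the path has length $N=O(\log c_\Delta)$, it occurs with probability at least $q^{O(\log c_\Delta)}=\Omega(c_\Delta^{-p})$; a block/geometric-trial argument then gives $E[T_{C\cup B}]\le c_\Delta^{p}\cdot O(\log c_\Delta)$. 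The same path is reused verbatim in the proof of Lemma~\ref{lemma1}, which is why the construction is set up this way.

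\textbf{Where your argument breaks.} Your key step---``in the near-$B$ strip $\{0<Z_n\le K\Delta\}$ there is uniformly positive probability of hitting $B$ in one step''---is justified by appealing to Assumption~3 (``$\lambda$ having arbitrarily large moments\dots hence nondegenerate mass at values forcing $Z_1\le 0$''). This is wrong: Assumption~3 is an \emph{upper} moment bound and says nothing about $\lambda$ being bounded away from zero. In the Markov-modulated framework $\lambda(y,\eta)$ may vanish identically for every $y$ reachable in one step from some state $x$, so $P_y^{\theta_*}(Z_1\le 0)=0$ for such $y$, and your uniform $\delta_0$ does not exist. The paper's multi-step path through the good transition $r_1\to r_2$ is exactly the device that gets around this. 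Your bulk analysis via the positive drift of $\log Z_n$ is sound, but without a correct mechanism for eventually reaching $B$ from the strip you cannot close the argument; ``geometric number of excursions through the near-$B$ strip'' has no anchor once the one-step absorption fails.

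If you want to repair your route, you must replace the one-step claim by a multi-step one: from any $y\in C'$ there is a bounded-length path (visiting a state where $\lambda>0$) that forces $Z\le 0$ with probability bounded below. But once you write that down you will essentially have reproduced the paper's construction, and the resulting bound will carry the same $c_\Delta^{p}$ factor rather than the $O(\log c_\Delta)$ you were hoping for.
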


\begin{lemma}
Let $T_{C}=\inf\{n>0:Y_{n}\in C\}$ and $T_{B}=\inf\{n>0:Y_{n}\in B\}$. If
Assumptions 1 to 4 hold, then, uniformly over $y_{0}=(x_{0},z_{0})\in
C^{\prime}$,
\[
P_{y_{0}}^{\theta_{\ast}}(T_{B}<T_{C})\geq\frac{c_{1}}{c_{\Delta}^{p}}%
\]
for some constant $c_{1}$ and $p$ (the $p$ can be chosen as the same $p$ in
Lemma \ref{lemma2}). \label{lemma1}
\end{lemma}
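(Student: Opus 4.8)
The plan is to establish a lower bound on $P_{y_0}^{\theta_\ast}(T_B < T_C)$ by exhibiting one explicit favorable scenario of bounded length whose probability under the importance sampler is at least $c_1/c_\Delta^p$. Starting from $y_0 = (x_0, z_0) \in C'$, we have $0 < z_0 \le c_\Delta \Delta M$, so the process is in the ``no tilting'' regime and evolves under the nominal dynamics until it next enters $C$ or $B$. Recall that under the nominal law, $Z_{n+1} = Z_n e^{-\gamma(X_{n+1},\xi_{n+1})} - \Delta\lambda(X_{n+1},\eta_{n+1})$, so the process hits $B$ (i.e.\ $Z_n \le 0$) precisely when the accumulated discounted rewards overtake the current scaled ``distance to the boundary'' $Z_n$. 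Since on $C'$ we have $Z_n = O(c_\Delta \Delta)$, a bounded number of transitions in which $\gamma$ takes a moderate value and $\lambda$ is bounded below by a fixed positive amount (an event of fixed positive probability by Assumption 4 and the non-triviality of $\lambda$) will drive $Z_n$ below zero, provided we rule out the process escaping upward into $C$ first.

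The key steps, in order, are: (i) fix a deterministic number of steps $N$ (independent of $\Delta$, or at most $O(\log c_\Delta)$) and a target event on $(\gamma(X_j,\xi_j),\lambda(X_j,\eta_j))$ for $j = 1,\dots,N$ forcing $Z_N \le 0$ while keeping $Z_j$ from ever exceeding the upper threshold $c_\Delta \Delta u_{\theta_\ast}(x)^{1/(2\theta_\ast-\rho_\Delta)} u_{\theta_\ast - \rho_\Delta}(x)^{1/(2\theta_\ast - \rho_\Delta)}$ defining entry into $C$; (ii) bound below the probability of this event under the nominal law, uniformly over the starting state $(x_0, z_0) \in C'$ and over the (finitely many) values of $X_j$, using that $\gamma(x,\xi)$ has a nondegenerate distribution and that the reward $\lambda(x,\eta)$ puts positive mass on a set bounded away from zero; (iii) track how the lower bound depends on $\Delta$ through $c_\Delta$: since $z_0$ may be as large as $c_\Delta \Delta M$ whereas each nominal decrement of $Z$ is of order $\Delta$ (times a reward of fixed size), we need roughly $O(c_\Delta)$ steps to cross $0$, and requiring each such step to behave ``favorably'' costs a factor that is at worst a fixed constant to the power $O(c_\Delta)$ — which would be too small. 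The right way is instead to allow $\gamma$ to be moderately \emph{negative} on a few steps so that $Z_n$ decreases geometrically (multiplication by $e^{-\gamma} < 1$ is not what we want; rather a positive $\gamma$ multiplies $Z_n$ by $e^{-\gamma} < 1$), i.e.\ choosing $\gamma$ positive shrinks $Z_n$ geometrically, so in $O(\log c_\Delta)$ steps with $\gamma$ bounded away from $0$ we bring $Z_n$ down to $O(\Delta)$, and then one more step with a reward of fixed size pushes it below $0$. This gives $N = O(\log c_\Delta)$ and a probability bound of the form (fixed constant)$^{O(\log c_\Delta)} = c_\Delta^{-p}$ for a suitable $p$, matching the claim and the exponent in Lemma \ref{lemma2}.

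The main obstacle is step (iii): controlling the interplay between the upper threshold (whose crossing sends us into $C$, ruining the attempt) and the need for $Z_n$ to descend all the way to $0$. One must verify that the geometric-descent scenario can be realized without $Z_n$ ever wandering above the $C$-threshold — this requires that on each step the multiplicative factor $e^{-\gamma}$ and the subtracted reward $\Delta\lambda$ keep $Z_n$ inside the window $(0, c_\Delta\Delta M]$, which is feasible because the window has multiplicative width bounded away from $0$ and $\infty$ (as $0 < m < M < \infty$) and $\gamma$ has a nondegenerate law on a neighborhood of any prescribed moderate value. A secondary technical point is uniformity over $y_0 \in C'$: since $C'$ is bounded (as noted in the text), the bound on $z_0$ is uniform, and the finiteness of $\mathcal{S}$ makes all the constants (lower bounds on densities of $\gamma$, on $\lambda$ exceeding a fixed level, etc.) uniform as well, so the ``uniformly over $y_0 \in C'$'' claim follows once the single-scenario bound is established.
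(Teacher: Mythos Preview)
Your overall strategy coincides with the paper's: exhibit an explicit favorable path of length $O(\log c_\Delta)$, along which $Z_n$ shrinks geometrically and then drops below zero, and whose nominal probability is at least a fixed constant to the power $O(\log c_\Delta)$, i.e.\ $c_\Delta^{-p}$. The paper in fact simply reuses the very path $A(x_0)$ already built in the proof of Lemma~\ref{lemma2}, so its proof of Lemma~\ref{lemma1} reduces to checking that $A(x_0)$ never leaves $B\cup C'$.

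Where your sketch has a genuine gap is in step~(iii), the ``stay below the $C$-threshold'' verification. The threshold defining $C$ is \emph{state-dependent}: $Z_n$ must be compared to $c_\Delta\Delta\, u_{\theta_*}(X_n)^{1/(2\theta_*-\rho_\Delta)}u_{\theta_*-\rho_\Delta}(X_n)^{1/(2\theta_*-\rho_\Delta)}$, not to the fixed number $c_\Delta\Delta M$. Consequently, merely requiring $e^{-\gamma_{n+1}}<1$ (or $Z_{n+1}<Z_n$) does \emph{not} keep the process in $C'$, since the threshold at $X_{n+1}$ may be strictly smaller than at $X_n$. Your remedy --- choosing $\gamma$ large enough that $e^{-\gamma}<m/M$, or appealing to ``$\gamma$ has a nondegenerate law on a neighborhood of any prescribed moderate value'' --- is not justified by Assumptions~1--4: positive variance of $\gamma(x,\xi)$ does not imply $P(\gamma>\log(M/m))>0$, nor any support condition.

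The paper resolves this by baking into the path the condition
\[
e^{-\gamma_{n+1}}\,\frac{u_{\theta_*}(X_n)^{1/\theta_*}}{u_{\theta_*}(X_{n+1})^{1/\theta_*}}\le u_1<1,
\]
which \emph{does} have positive probability for every $x$: if it failed, then $e^{\theta_*\gamma_1}u_{\theta_*}(X_1)/u_{\theta_*}(x)\le 1$ a.s., forcing the eigenvalue relation $E_x[e^{\theta_*\gamma_1}u_{\theta_*}(X_1)/u_{\theta_*}(x)]=1$ to hold with equality a.s., contradicting Assumption~4. The induction is then immediate: if $Z_n$ is below its $X_n$-threshold, one multiplies by $e^{-\gamma_{n+1}}\le u_1\,u_{\theta_*}(X_{n+1})^{1/\theta_*}/u_{\theta_*}(X_n)^{1/\theta_*}$, and continuity of $u_\theta(\cdot)$ in $\theta$ near $\theta_*$ (so that the exponents $1/(2\theta_*-\rho_\Delta)$ and the factors $u_{\theta_*-\rho_\Delta}$ are close to $1/(2\theta_*)$ and $u_{\theta_*}$ for small $\Delta$) absorbs the mismatch, giving $Z_{n+1}$ below its $X_{n+1}$-threshold. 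This eigenfunction-ratio condition is the missing ingredient in your argument; once you add it, your path construction and probability estimate go through exactly as you outlined.
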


\begin{lemma}
Denote $T_{C^{\prime}\cup B}=\inf\{n>0:Y_{n}\in C^{\prime}\cup B\}$ and
suppose that Assumptions 1 to 4 are in force. For any $y_{0}=(x_{0},z_{0})\in
C$, we have $P_{y_{0}}^{\theta_{\ast}}(T_{C^{\prime}\cup B}<\infty)=1$ and
\[
E_{y_{0}}^{\theta_{\ast}}[T_{C^{\prime}\cup B}]=O\left(  \log\left(
\frac{z_{0}}{mc_{\Delta}\Delta}\right)  \right)
\]
\label{lemma3}
\end{lemma}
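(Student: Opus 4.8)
The plan is to dominate the exit time of $Y$ from $C$ by the first passage time of the $\theta_{\ast}$-tilted Markov random walk over a level of height $\log(z_{0}/(mc_{\Delta}\Delta))$, for which a positive drift is available. Fix $y_{0}=(x_{0},z_{0})\in C$ and write $z_{0}=(1-d_{0})e^{-s_{0}}$ for the underlying pair $(s_{0},d_{0})$; put $g(x)=u_{\theta_{\ast}}(x)^{1/(2\theta_{\ast}-\rho_{\Delta})}u_{\theta_{\ast}-\rho_{\Delta}}(x)^{1/(2\theta_{\ast}-\rho_{\Delta})}$, so that $m=\min_{x\in\mathcal{S}}g(x)>0$ and, since $y_{0}\in C$, $z_{0}>c_{\Delta}\Delta\,g(x_{0})\geq c_{\Delta}\Delta m>0$. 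Because $C$, $C^{\prime}$ and $B$ partition the state space of $Y$, the stopping time $\tau:=T_{C^{\prime}\cup B}$ is exactly the first exit time of $Y$ from $C$; hence before time $\tau$ every transition of the sampler uses the $\theta_{\ast}$-tilted dynamics (the kernel $K_{\theta_{\ast}}$ together with the exponential tilting of $\gamma$). I would then introduce an always-tilted auxiliary process $(\widehat{X}_{n},\widehat{S}_{n},\widehat{D}_{n})$ started at $(x_{0},s_{0},d_{0})$, coupled with the sampler through common driving randomness so that the two trajectories coincide up to time $n$ on the event $\{\tau>n\}$, and set $\widehat{Z}_{n}=(1-\widehat{D}_{n})e^{-\widehat{S}_{n}}$.

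The key domination is elementary. Since the reward rates are non-negative, $\widehat{D}_{n}\geq d_{0}$, hence $\widehat{Z}_{n}\leq(1-d_{0})e^{-\widehat{S}_{n}}=z_{0}\exp(-(\widehat{S}_{n}-s_{0}))$. On $\{\tau>n\}$ we have $Z_{k}=\widehat{Z}_{k}$ for all $k\leq n$, so if $\widehat{S}_{k}-s_{0}>L:=\log(z_{0}/(mc_{\Delta}\Delta))$ for some $k\leq n$ then $Z_{k}<mc_{\Delta}\Delta\leq c_{\Delta}\Delta\,g(X_{k})$, i.e.\ $Y_{k}\notin C$, contradicting $\tau>n$. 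Thus $\{\tau>n\}\subseteq\{\widehat{S}_{k}-s_{0}\leq L\text{ for all }k\leq n\}=\{\widehat{T}>n\}$, where $\widehat{T}:=\inf\{n\geq 1:\widehat{S}_{n}-s_{0}>L\}$; consequently $\tau\leq\widehat{T}$ pathwise, $\{\widehat{T}<\infty\}\subseteq\{\tau<\infty\}$, and $E^{\theta_{\ast}}_{y_{0}}[\tau]\leq E^{\theta_{\ast}}[\widehat{T}]$. Note $L>0$ by the preceding paragraph, so the level is genuinely positive.

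It then remains to control $\widehat{T}$ for the $\theta_{\ast}$-tilted Markov random walk $\widehat{S}$. Under $P^{\theta_{\ast}}$ one has $\widehat{S}_{n}/n\to\psi'(\theta_{\ast})=\mu>0$ almost surely (as recorded in Section~\ref{SecStandardApproach}; positivity of $\mu$ is also used in Proposition~\ref{PropLIa}(i)), so $\widehat{T}<\infty$ a.s.\ and therefore $P^{\theta_{\ast}}_{y_{0}}(T_{C^{\prime}\cup B}<\infty)=1$. For the expectation, the increments $\gamma(\widehat{X}_{n+1},\xi_{n+1})$ have, under $P^{\theta_{\ast}}$, a moment generating function finite in a two-sided neighbourhood of the origin (a consequence of Assumptions~1 and~2), and $\mathcal{S}$ is finite; a standard first-passage estimate --- e.g.\ optional stopping applied to the martingale $\widehat{S}_{n}-s_{0}-n\mu+h(\widehat{X}_{n})$, with $h$ the bounded solution of the Poisson equation for $K_{\theta_{\ast}}$, combined with a uniform bound on $E^{\theta_{\ast}}$ of the overshoot of $\widehat{S}$ at $\widehat{T}$ --- yields $E^{\theta_{\ast}}[\widehat{T}]\leq(L+K_{0})/\mu$ for a finite constant $K_{0}$ independent of $\Delta$ and $y_{0}$. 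Since $L=\log(z_{0}/(mc_{\Delta}\Delta))$, this gives $E^{\theta_{\ast}}_{y_{0}}[T_{C^{\prime}\cup B}]=O\!\left(\log(z_{0}/(mc_{\Delta}\Delta))\right)$, as claimed.

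The main obstacle is this last step. The one-step drift of $\widehat{S}$ is not uniformly $\mu$ under the Markov modulation --- only its per-cycle average is --- so the clean linear bound on $E^{\theta_{\ast}}[\widehat{T}]$ requires either the Poisson-equation corrector $h$ above or a regeneration/Wald argument over successive returns of $\widehat{X}$ to a fixed state, together with the customary tightness of the overshoot of a light-tailed Markov random walk at a high level; these are routine given that $\mathcal{S}$ is finite and the tilted increments are light-tailed, but this is where the real work lies. A secondary, purely bookkeeping, point is to set up the coupling between the sampler and the always-tilted process carefully and to record that exiting $C$ coincides with entering $C^{\prime}\cup B$.
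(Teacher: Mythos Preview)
Your proposal is correct and follows essentially the same approach as the paper: both reduce $T_{C'\cup B}$ to the first passage of the $\theta_{\ast}$-tilted random walk $S$ over the level $\log(z_{0}/(mc_{\Delta}\Delta))$ via the domination $Z_{n}\leq z_{0}e^{-(S_{n}-s_{0})}$, and then invoke positive drift under $P^{\theta_{\ast}}$. The paper handles the Markov-modulation issue in the last step by passing to regeneration times at $x_{0}$ (your ``regeneration/Wald'' option) rather than the Poisson-equation corrector, and it leaves the coupling with an always-tilted auxiliary process implicit where you spell it out; otherwise the arguments coincide.
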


The first lemma shows that it takes on average a logarithmic number of steps
(in $1/\Delta$) for the process to reach either $B$ or $C$ from $C^{\prime}$.
The second lemma shows that there is a significant probability, uniformly over
the initial positions in $C^{\prime}$, that the process reaches $B$ before
$C$. The third lemma states that the time taken from $C$ to $C^{\prime}$ is
also logarithmic in $1/\Delta$. Lemmas \ref{lemma2} and \ref{lemma3} guarantee
that each cross-border travel, either from $C$ to $C^{\prime}$ or from
$C^{\prime}$ to $C$, takes on average logarithmic time. On the other hand,
Lemma \ref{lemma1} guarantees that a geometric number of iteration, with
significant probability of success, will bring the process to $B$ from some
state in $C^{\prime}$. These will prove the following proposition on the
algorithmic running time.

\begin{proposition}
Suppose that Assumptions 1 to 4 hold. Then, for any $y_{0}=(x_{0},z_{0})$, we
have $P_{y_{0}}^{\theta_{\ast}}(T_{B}<\infty)=1$ and
\[
E_{y_{0}}^{\theta_{\ast}}[T_{B}]=O\left(  c_{\Delta}^{p}\log c_{\Delta}%
+\log\left(  \frac{1}{c_{\Delta}\Delta}\right)  \right)
\]
for some $p>0$. \label{termination}
\end{proposition}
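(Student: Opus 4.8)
The plan is to decompose the total time $T_B$ to reach the termination set $B$ into alternating excursions between $C$ and $C'$, and to control both the number of excursions and the time per excursion using Lemmas \ref{lemma2}, \ref{lemma1}, and \ref{lemma3}. First I would handle the initial transient: starting from an arbitrary $y_0=(x_0,z_0)$, the process is either already in $B$ (done), in $C'$, or in $C$. If it starts in $C$, Lemma \ref{lemma3} gives that it reaches $C'\cup B$ in expected time $O(\log(z_0/(mc_\Delta\Delta)))$ with probability one; if it starts in $C'$, Lemma \ref{lemma2} gives that it reaches $C\cup B$ in expected time $O(c_\Delta^p\log c_\Delta)$ with probability one. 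So after one application of at most both lemmas, at a cost of $O(c_\Delta^p\log c_\Delta+\log(1/(c_\Delta\Delta)))$ in expectation (absorbing the $z_0$ dependence, or stating the bound uniformly for $z_0$ bounded as in the algorithm where $z_0=1/\Delta$ after the first transition from the origin — more carefully, $z_0\le 1/\Delta$ always since $Z_0=1$ when started from $(0,0,x_0)$), we may assume the process sits in $C'$ (otherwise it has already hit $B$ and we are done).

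The heart of the argument is then a geometric-trials bound on the number of $C'\!\to\!C\!\to\!C'$ round trips. Let $\tau_0=0$ and, starting from $C'$, define successive stopping times: $\sigma_1=\inf\{n>\tau_0:Y_n\in C\cup B\}$, then if $Y_{\sigma_1}\in C$, $\tau_1=\inf\{n>\sigma_1:Y_n\in C'\cup B\}$, and so on, alternating between "hit $C\cup B$ from $C'$" and "hit $C'\cup B$ from $C$". By the strong Markov property and Lemma \ref{lemma1}, each time the process is in $C'$ it has probability at least $c_1/c_\Delta^p$ (uniformly over the starting state in $C'$) of hitting $B$ before $C$ on the next $C'$-excursion; hence the number $N$ of such excursions needed is stochastically dominated by a geometric random variable with success probability $c_1/c_\Delta^p$, so $E^{\theta_*}_{y_0}[N]\le c_\Delta^p/c_1$. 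Each excursion from $C'$ costs in expectation $O(c_\Delta^p\log c_\Delta)$ by Lemma \ref{lemma2}, and each intervening excursion from $C$ (there are at most $N$ of them) costs in expectation $O(\log(1/(c_\Delta\Delta)))$ by Lemma \ref{lemma3} — here I would note that the $z_0$ appearing in Lemma \ref{lemma3} is, on entry to $C$ from $C'$, bounded by a constant times $c_\Delta\Delta$ (since $C'$ is bounded: $z_0\le c_\Delta\Delta M$), so the logarithm in Lemma \ref{lemma3} is $O(\log(M/m))=O(1)$ for those excursions, and only the very first excursion from $C$ (if $y_0\in C$) can contribute the full $\log(z_0/(mc_\Delta\Delta))$ term. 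Applying Wald's identity (the excursion lengths are not i.i.d.\ but the uniform-in-starting-point expectation bounds from the lemmas let us bound $E[\sum_{i=1}^{N}(\text{length of excursion }i)]$ by $E[N]$ times the uniform bound, via a standard optional-stopping / telescoping argument on the filtration at the $\sigma_i,\tau_i$), we get
\[
E^{\theta_*}_{y_0}[T_B]\le C\,\big(E[N]\cdot c_\Delta^p\log c_\Delta\;+\;E[N]\cdot O(1)\;+\;\log(1/(c_\Delta\Delta))\big)=O\!\big(c_\Delta^{2p}\log c_\Delta+\log(1/(c_\Delta\Delta))\big).
\]
Relabeling $2p$ as $p$ (the lemmas only assert existence of \emph{some} exponent $p$) yields the stated bound. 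Almost-sure finiteness of $T_B$ follows because $N<\infty$ a.s.\ (geometric) and each excursion is a.s.\ finite by Lemmas \ref{lemma2} and \ref{lemma3}.

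The main obstacle I anticipate is the Wald-type step: the excursion durations are neither i.i.d.\ nor independent of $N$, so one cannot simply multiply $E[N]$ by an average excursion length. The clean way around this is to work on the filtration $\mathcal{F}_k$ generated by the process up to the $k$-th renewal epoch and write $T_B=\sum_{k\ge1}(\text{duration of }k\text{-th excursion})\,\mathbf{1}(N\ge k)$; since $\{N\ge k\}\in\mathcal F_{k-1}$ and, by the strong Markov property together with the \emph{uniform} (over starting states in $C'$ resp.\ $C$) expectation bounds in Lemmas \ref{lemma2} and \ref{lemma3}, $E^{\theta_*}[\text{duration of }k\text{-th excursion}\mid\mathcal F_{k-1}]$ is bounded by the relevant $O(\cdot)$ quantity on $\{N\ge k\}$, one gets $E^{\theta_*}_{y_0}[T_B]\le \sum_{k\ge1}P(N\ge k)\cdot O(c_\Delta^p\log c_\Delta)=E[N]\cdot O(c_\Delta^p\log c_\Delta)$, plus the $O(1)$-per-step $C$-excursion contributions handled identically, plus the one-off initial term. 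The uniformity clauses explicitly built into the three lemmas are exactly what makes this conditional-expectation bookkeeping go through, so the argument is routine once those are invoked; the only care needed is to separate out the first excursion (whose starting $z_0$ is not a priori in the bounded set $C'$) from the subsequent ones.
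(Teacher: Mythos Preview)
Your approach is essentially the paper's own: decompose $T_B$ into alternating $C'$-to-$C$ and $C$-to-$C'$ excursions, bound the number of round trips by a geometric variable via Lemma~\ref{lemma1}, and bound each excursion by Lemmas~\ref{lemma2} and~\ref{lemma3}. The paper packages the same idea as a fixed-point inequality for $f(y)=E_y^{\theta_\ast}T_B$ (take sup over $C'$ and solve), but this is equivalent to the conditional-expectation telescoping you describe, and you correctly identify and resolve the Wald-type dependency issue.

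There is, however, one genuine gap. You claim that on entry to $C$ from $C'$ the $z$-coordinate is bounded by $Mc_\Delta\Delta$, ``since $C'$ is bounded,'' and conclude that subsequent $C$-excursions cost $O(\log(M/m))=O(1)$. This confuses the state \emph{just before} the transition (which is in $C'$, hence bounded) with the state \emph{at} time $T_C$ (which lies in $C$). One has $Z_{T_C}=Z_{T_C-1}e^{-\gamma_{T_C}}-\Delta\lambda_{T_C}\le Mc_\Delta\Delta\cdot e^{-\gamma_{T_C}}$, and under Assumptions~1--4 the factor $e^{-\gamma_{T_C}}$ is not bounded, so $Z_{T_C}$ can overshoot the $C'$-boundary by an arbitrary amount. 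Your $O(1)$ bound on $\log(Z_{T_C}/(mc_\Delta\Delta))$ is therefore unjustified.

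The paper's fix is to bound the overshoot through the whole $C'$-excursion rather than the last step: from $Z_{T_C}\le z_0\exp\big(-\sum_{i=1}^{T_C}\gamma_i\big)$ one gets
\[
\log\!\Big(\frac{Z_{T_C}}{mc_\Delta\Delta}\Big)\le \log\!\Big(\frac{M}{m}\Big)+\sum_{i=1}^{T_{C\cup B}}|\gamma_i|,
\]
and then a Wald-type bound (each $|\gamma_i|$ has conditional expectation at most $\tilde c=\sup_x E[|\gamma(X_1,\xi_1)|\mid X_1=x]$ given the past) yields $E\big[\sum_{i\le T_{C\cup B}}|\gamma_i|\big]\le \tilde c\,E[T_{C\cup B}]=O(c_\Delta^p\log c_\Delta)$. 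Thus each $C$-excursion after the first costs $O(c_\Delta^p\log c_\Delta)$ in expectation, not $O(1)$. This does not change your final bound---it is absorbed into the $E[N]\cdot O(c_\Delta^p\log c_\Delta)=O(c_\Delta^{2p}\log c_\Delta)$ term you already have, and the relabeling $2p\to p$ still gives the statement---but the argument as you wrote it is incomplete at exactly this point.
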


\bigskip

We now give the proofs of the lemmas and Proposition \ref{termination}.

\bigskip

\begin{proof}
[Proof of Lemma \ref{lemma2}]Our strategy to prove Lemma \ref{lemma2} is the
following. Given any initial state $y_{0}=(x_{0},z_{0})\in C^{\prime}$, we
first construct explicitly a path that takes $y_{0}$ to $B$ within $O(\log
c_{\Delta})$ steps (which we call event $A(x_{0})$ below), and we argue that
this path happens with probability $\Omega(c_{\Delta}^{-p})$ for some $p>0$.
Then we look at the process in blocks of $O(\log c_{\Delta})$ steps. For each
block, if the process follows the particular path that leads to $B$, then
$T_{B}$, and hence $T_{B\cup C}$, is hit; otherwise the process may have hit
$C$, or may continue to the next block starting with some state in $C^{\prime
}$. In other words, $T_{B\cup C}$ is bounded by the time from the initial
position up to the time that the process finishes following exactly the
particular path in a block. We note that a successful follow of the particular
path is a geometric r.v. with parameter $O(c_{\Delta}^{-p})$, and hence the
mean of $T_{B\cup C}$ is bounded by $O(c_{\Delta}^{-p})\times O(\log
c_{\Delta})$ and therefore the result. Now we make the previous intuition rigorous.

We first prove some elementary probabilistic bounds for $\gamma(\cdot)$ and
$\lambda(\cdot)$. As in Section 4 we simplify our notation by writing
$\gamma_{n}=\gamma(X_{n},\xi_{n})$ and $\lambda_{n}=\lambda(X_{n},\xi_{n})$.
We first argue that for any $y=(x,z)\in C^{\prime}$,
\begin{equation}
P_{y}^{\theta_{*}}\left(  e^{-\gamma_{1}}\frac{u_{\theta_{\ast}}%
(x)^{1/\theta_{\ast}}}{u_{\theta_{\ast}}(X_{1})^{1/\theta_{\ast}}}\leq
u_{1}\right)  >0 \label{prob1}%
\end{equation}
for some $0<u_{1}<1$. Note that the initial conditioning in the probability in
\eqref{prob1} depends on $y$ only through $x$.

We prove \eqref{prob1} by contradiction. Suppose \eqref{prob1} is not true,
then there exists some Markov state $w$ such that
\[
P_{w}^{\theta_{*}}\left(  e^{-\gamma_{1}}\frac{u_{\theta_{\ast}}%
(w)^{1/\theta_{\ast}}}{u_{\theta_{\ast}}(X_{1})^{1/\theta_{\ast}}}%
\geq1\right)  =1.
\]
Now if this happens and additionally
\[
P_{w}^{\theta_{*}}\left(  e^{-\gamma_{1}}\frac{u_{\theta_{\ast}}%
(w)^{1/\theta_{\ast}}}{u_{\theta_{\ast}}(X_{1})^{1/\theta_{\ast}}}>1\right)
>0,
\]
which obviously implies
\[
P_{w}^{\theta_{*}}\left(  e^{\gamma_{1}}\frac{u_{\theta_{\ast}}(X_{1}%
)^{1/\theta_{\ast}}}{u_{\theta_{\ast}}(w)^{1/\theta_{\ast}}}<1\right)  >0,
\]
then
\[
E_{w}^{\theta_{*}}\left[  e^{\theta_{\ast}\gamma_{1}}\frac{u_{\theta_{\ast}%
}(X_{1})}{u_{\theta_{\ast}}(w)}\right]  <1,
\]
which contradicts the definition of $\theta_{\ast}$. Hence we are left with
the possibility that
\[
P_{w}^{\theta_{*}}\left(  e^{-\gamma_{1}}\frac{u_{\theta_{\ast}}%
(w)^{1/\theta_{\ast}}}{u_{\theta_{\ast}}(X_{1})^{1/\theta_{\ast}}}=1\right)
=1,
\]
but this contradicts our non-degeneracy assumption, namely, Assumption 4.

Using \eqref{prob1}, note that we can pick $u_{2}>0$ small enough such that
\begin{equation}
P_{y}^{\theta_{*}}\left(  e^{-\gamma_{1}}\frac{u_{\theta_{*}}(x)^{1/\theta
_{*}}}{u_{\theta_{*}}(X_{1})^{1/\theta_{*}}}\leq u_{1},\ e^{-\gamma_{1}}\geq
u_{2}\right)  >\epsilon_{1}>0 \label{prob3}%
\end{equation}
for any $y=(x,z)\in C^{\prime}$. This follows from a contradiction proof since
the non-existence of $u_{2}$ would imply $e^{-\gamma_{1}}=0$ a.s.

On the other hand, it is easy to see that there exists $r_{1}$ and $r_{2}$ and
a small enough $u_{3}>0$ such that
\begin{equation}
P_{r_{1}}^{\theta_{*}}(X_{1}=r_{2},\ \lambda(r_{4},\eta_{1})\geq
u_{3})>\epsilon_{2}>0 \label{prob5}%
\end{equation}
since otherwise $\lambda_{1}=0$ a.s.

We will now construct the path $A(x_{0})$ as discussed earlier in the proof.
This path will depend on the initial position $y_{0}=(x_{0},z_{0})\in
C^{\prime}$, but it has length $O(\log c_{\Delta})$ uniformly over any initial
position in $C^{\prime}$. The path has the property that whenever the Markov
state hits $r_{1}$, it would go to $r_{2}$ with $\lambda(r_{2},\eta_{n})\geq
u_{3}$ in the next state. Moreover, for every step, $e^{-\gamma(X_{n},\xi
_{n})}u_{\theta_{*}}(X_{n-1})^{1/\theta_{*}}/u_{\theta_{*}}(X_{n}%
)^{1/\theta_{*}}\leq u_{1}$ and $e^{-\gamma_{n}}\geq u_{2}$. The path evolves
in a periodic way i.e. it hits $r_{1}$ in every $l$ steps for $N$ times, where
$N$ is a number to be determined later. The existence of $l$ and the
occurrence of such periodic cycles with positive probability is guaranteed by
the irreducibility of the Markov chain $X_{n}$. In other words, consider the
event $A(x_{0})$ given by
\begin{align*}
A(x_{0})  &  =\Bigg\{\text{\ for\ }k=1,\ldots,N,\ X_{a+kl}=r_{1}%
,\ X_{a+kl+1}=r_{2},\ \lambda(r_{2},\eta_{a+kl+1})\geq u_{3};{}\\
&  {}\text{\ for\ }i=1,\ldots,a+Nl+b,\ e^{-\gamma(X_{i},\xi_{i})}%
\frac{u_{\theta_{*}}(X_{i-1})^{1/\theta_{*}}}{u_{\theta_{*}}(X_{i}%
)^{1/\theta_{*}}}\leq u_{1},\ e^{-\gamma(X_{i},\xi_{i})}\geq u_{2};{}\\
&  {}X_{a+Nl+b}=x_{0}\Bigg\}
\end{align*}
where $a$ is the number of steps for the initial state $x_{0}$ to reach
$r_{1}$ and $b$ is the number of steps for the last hit on $r_{2}$ back to
state $x_{0}$. Note that $a$ and $b$ all depend on $x_{0}$, but we suppress
the dependence for notational convenience. $N$ is an integer that we will pick momentarily.

Under $A(x_{0})$ we have
\begin{align*}
Z_{a+Nl+b}  &  =ze^{-\gamma_{1}-\cdots-\gamma_{a+Nl+b}}-\Delta\lambda
_{1}e^{-\gamma_{2}-\cdots-\gamma_{a+Nl+b}}-\Delta\lambda_{2}e^{-\gamma
_{3}-\cdots-\gamma_{a+Nl+b}}-\cdots-\Delta\lambda_{a+Nl+b}\\
&  \leq zu_{1}^{a+Nl+b}-\Delta u_{3}(u_{2}^{a+Nl}+u_{2}^{a+(N-1)l}%
+\cdots+u_{2}^{a})\\
&  \leq zu_{1}^{a+Nl+b}-\Delta u_{3}u_{2}^{a}\frac{1-u_{2}^{(N+1)l}}{1-u_{2}}%
\end{align*}
Now pick $N$ to be the smallest integer at least as large as
\[
\frac{\log((u_{1}^{a+b}Mc_{\Delta}\Delta(1-u_{2})+\Delta u_{3}u_{2}%
^{a+l})/(\Delta u_{3}u_{2}^{a}))}{l\log(1/u_{1})}%
\]
This implies that
\[
N<\frac{\log((z_{0}u_{1}^{a+b}(1-u_{2})+\Delta u_{3}u_{2}^{a+l})/(\Delta
u_{3}u_{2}^{a}))}{l\log(1/u_{1})}+1
\]
(note the definition of $C^{\prime}$ and $M$ above) and a simple verification
reveals that $Z_{a+Nl+b}\leq0$ on $A(x_{0})$. Hence if $A(x_{0})$ occurs, then
$T_{B}$ is hit before step $a+Nl+b$. Note that $N=O(\log c_{\Delta})$.

Now note that given $y_{0}=(x_{0},z_{0})\in C^{\prime}$, the probability that
$A(x_{0})$ happens is larger than $q^{a+Nl+b}$ for some $q>0$. If we divide
the steps of the chain into blocks of size $r+Nl$, where $r=\max
_{x\in\mathcal{S}}\{a(x)+b(x)\}$, then the number of blocks required for
$Z_{n}$ to hit 0 (and hence $T_{B\cup C}$ is achieved) is bounded by a
geometric r.v. with parameter $q^{r+Nl}$. Taking also into account the length
of the blocks, we have
\[
E_{y_{0}}^{\theta_{*}}T_{B\cup C}\leq\frac{1}{q^{r+Nl}}(r+Nl)=O(c_{\Delta}%
^{p}\log c_{\Delta})
\]
for some $p>0$.
\end{proof}

\bigskip

\begin{proof}
[Proof of Lemma \ref{lemma1}]Given an initial position $y_{0}=(x_{0},z_{0})\in
C^{\prime}$. It suffices to show that the path $A(x_{0})$ we have constructed
in the proof of Lemma \ref{lemma2} does not hit $T_{C}$ before $T_{B}$ i.e. it
does not hit $T_{C}$ for every step up through $a+Nl+b$. The conclusion of
Lemma \ref{lemma1} then follows by noting that $P_{y_{0}}^{\theta_{*}}%
(T_{B}>T_{C})\geq P_{y_{0}}^{\theta_{*}}(A(x_{0}))$. To prove $T_{C}$ is not
hit for every step, we show that $Z_{n}<c_{\Delta}\Delta u_{\theta_{*}}%
(X_{n})^{1/(2\theta_{*}-\rho_{\Delta})}u_{\theta_{*}-\rho_{\Delta}}%
(X_{n})^{1/(2\theta_{*}-\rho_{\Delta})}$ i.e. $Z_{n}\in B\cup C^{\prime}$, for
every $n=1,\ldots,a+Nl+b$ by induction. Suppose $Z_{n}<c_{\Delta}\Delta
u_{\theta_{*}}(X_{n})^{1/(2\theta_{*}-\rho_{\Delta})}u_{\theta_{*}%
-\rho_{\Delta}}(X_{n})^{1/(2\theta_{*}-\rho_{\Delta})}$, then
\begin{align*}
Z_{n+1}  &  =Z_{n}e^{-\gamma_{n+1}}-\Delta\lambda_{n+1}\\
&  \leq c_{\Delta}\Delta u_{\theta_{*}}(X_{n})^{1/(2\theta_{*}-\rho_{\Delta}%
)}u_{\theta_{*}-\rho_{\Delta}}(X_{n})^{1/(2\theta_{*}-\rho_{\Delta})}\cdot
u_{1}\frac{u_{\theta_{*}}(X_{n+1})^{1/\theta_{*}}}{u_{\theta_{*}}%
(X_{n})^{1/\theta_{*}}}\\
&  <c_{\Delta}\Delta u_{\theta_{*}}(X_{n+1})^{1/(2\theta_{*}-\rho_{\Delta}%
)}u_{\theta_{*}-\rho_{\Delta}}(X_{n+1})^{1/(2\theta_{*}-\rho_{\Delta})}%
\end{align*}
for small enough $\Delta$, where $u_{1}$ is defined in \eqref{prob1}, by
choosing the eigenvectors $u_{\theta_{*}-\delta}(x)$ that are continuous in
$\delta$ within a small neighborhood of 0 uniformly over all $x\in\mathcal{S}%
$. Hence we have proved our claim.
\end{proof}

\bigskip

\begin{proof}
[Proof of Lemma \ref{lemma3}]Suppose we start at $y_{0}=(x_{0},z_{0})\in C$.
Consider $\tilde{\gamma}_{n}=\sum_{i=\tau_{n-1}}^{\tau_{n}}\gamma_{i}$ where
$\tau_{n}=\inf\{i>\tau_{n-1}:X_{i}=x_{0}\}$. Inside region $C$ the random walk
$\tilde{S}_{n}=\sum_{j=1}^{n}\tilde{\gamma}_{j}$ has positive drift i.e.
$E\tilde{\gamma}_{n}>0$. For the process to hit $C^{\prime}\cup B$, it
suffices to have
\[
z_{0}e^{-\gamma_{1}-\cdots-\gamma_{n}}-\Delta\lambda_{1}e^{-\gamma_{2}%
-\cdots-\gamma_{n}}-\cdots-\Delta\lambda_{n}\leq mc_{\Delta}\Delta
\]
or equivalently
\[
\Delta(\lambda_{n}+mc_{\Delta}\Delta)e^{\gamma_{1}+\cdots+\gamma_{n}}%
+\Delta\lambda_{n-1}e^{\gamma_{1}+\cdots+\gamma_{n-1}}+\cdots+\Delta
\lambda_{1}e^{\gamma_{1}}\geq z_{0}
\]
This will be implied by the condition $\Delta(\lambda_{n}+mc_{\Delta}%
\Delta)e^{\gamma_{1}+\cdots+\gamma_{n}}\geq z_{0}$, which in turn can be
achieved if $S_{n}\geq\log(z_{0}/(mc_{\Delta}\Delta))$. Note that if we only
consider the steps when $X_{n}=x_{0}$, then the condition becomes $\tilde
{S}_{n}\geq\log(z_{0}/(mc_{\Delta}\Delta))$ where $\tilde{S}_{n}$ is now a
positively drifted random walk. This happens with probability one and the
expected time for this to happen is $O\left(  \log\left(  \frac{z_{0}%
}{mc_{\Delta}\Delta}\right)  \right)  $, which provides an upper bound for
$E_{y_{0}}^{\theta_{*}}[T_{C^{\prime}\cup B}]$.
\end{proof}

\bigskip

\begin{proof}
[Proof of Proposition \ref{termination}]Consider an initial position at
$y_{0}=(x_{0},z_{0})\in C$ (if $y_{0}=(x_{0},z_{0})\in C^{\prime}$ the same
analysis goes through resulting in a shorter mean running time). With
probability one $Y_{n}$ will enter $C^{\prime}\cup B$ by Lemma \ref{lemma3}.
If $T_{C^{\prime}}<T_{B}$ then by Lemma \ref{lemma1} the process hits $B$ with
probability that is bounded away from zero uniformly over $Y_{T_{C^{\prime}}}%
$, otherwise it goes back to $C$. Hence by geometric trial argument the
process will hit $B$ eventually. We obtain the first part of the proposition.

We now consider $E_{y_{0}}^{\theta_{*}}T_{B}$. Suppose first that
$y_{0}=(x_{0},z_{0})\in C^{\prime}$. Write
\[
E_{y_{0}}^{\theta_{*}}T_{B}=E_{y_{0}}^{\theta_{*}}[T_{B};T_{B}<T_{C}%
]+E_{y_{0}}^{\theta_{*}}[T_{B};T_{B}>T_{C}]
\]
Let $\bar{T}_{B}=T_{B}-T_{C}$ on the set $T_{B}>T_{C}$ i.e. $\bar{T}_{B}$ is
the residual time to hit $B$ once $T_{C}$ is first hit. We can write
\[
E_{y_{0}}^{\theta_{*}}T_{B}=E_{y_{0}}^{\theta_{*}}T_{B\cup C}+E_{y_{0}%
}^{\theta_{*}}[\bar{T}_{B};T_{B}>T_{C}]=E_{y_{0}}^{\theta_{*}}T_{B\cup
C}+E_{y_{0}}^{\theta_{*}}[E_{Y_{T_{C}}(y_{0})}^{\theta_{*}}\bar{T}_{B}%
;T_{B}>T_{C}]
\]
where $Y_{T_{C}}(y_{0})$ is the state at time $T_{C}$ (the $y_{0}$ as a
parameter emphasizes the dependence on the initial position $y_{0}$). We
further write
\begin{equation}
E_{y_{0}}^{\theta_{*}}T_{B}=E_{y_{0}}^{\theta_{*}}T_{B\cup C}+E_{y_{0}%
}^{\theta_{*}}[E_{Y_{T_{C}}(y_{0})}^{\theta_{*}}[\bar{T}_{B\cup C^{\prime}%
}+E_{Y_{\bar{T}_{C^{\prime}}}}^{\theta_{*}}[\bar{\bar{T}}_{B};\bar{T}_{B}%
>\bar{T}_{C^{\prime}}]];T_{B}>T_{C}] \label{equation1}%
\end{equation}
where $\bar{\bar{T}}_{B}=\bar{T}_{B}-\bar{T}_{C^{\prime}}$ on the set $\bar
{T}_{B}>\bar{T}_{C^{\prime}}$.

Let $f(y)=E_{y}^{\theta_{\ast}}T_{B}$. \eqref{equation1} leads to
\begin{align}
f(y_{0})  &  \leq E_{y_{0}}^{\theta_{\ast}}T_{B\cup C}+E_{y_{0}}^{\theta
_{\ast}}[E_{Y_{T_{C}}(y_{0})}^{\theta_{\ast}}\bar{T}_{B\cup C^{\prime}}%
;T_{B}>T_{C}]+\sup_{w\in C^{\prime}}f(w)P_{y_{0}}^{\theta_{\ast}}(T_{B}%
>T_{C})\nonumber\\
&  \leq E_{y_{0}}^{\theta_{\ast}}T_{B\cup C}+cE_{y_{0}}^{\theta_{\ast}}\left[
\log\left(  \frac{Y_{T_{C}}(y_{0})}{mc_{\Delta}\Delta}\right)  ;T_{B}%
>T_{C}\right]  +\sup_{w\in C^{\prime}}f(w)P_{y_{0}}^{\theta_{\ast}}%
(T_{B}>T_{C}) \label{equation2}%
\end{align}
where $c>0$ is a constant, using Lemma \ref{lemma3}. Now consider
\[
Y_{T_{C}}(y_{0})=z_{0}e^{-\gamma_{1}-\cdots-\gamma_{T_{C}}}-\Delta\lambda
_{1}e^{-\gamma_{2}-\cdots-\gamma_{T_{C}}}-\cdots-\Delta\lambda_{T_{C}}\leq
z_{0}e^{-\gamma_{1}-\cdots-\gamma_{T_{C}}}%
\]
and hence
\[
\log\left(  \frac{Y_{T_{C}}(y_{0})}{mc_{\Delta}\Delta}\right)  \leq\log
z_{0}-\gamma_{1}-\cdots-\gamma_{T_{C}}-\log(mc_{\Delta}\Delta)
\]
Now
\begin{align*}
E_{y_{0}}^{\theta_{\ast}}[-\gamma_{1}-\cdots-\gamma_{T_{C}};T_{B}>T_{C}]  &
\leq E_{y_{0}}^{\theta_{\ast}}[|\gamma_{1}|+\cdots+|\gamma_{T_{B\cup C}%
}|;T_{B}>T_{C}]\\
&  \leq E_{y_{0}}^{\theta_{\ast}}[|\gamma_{1}|+\cdots+|\gamma_{T_{B\cup C}%
}|]\\
&  \leq\tilde{c}E_{y_{0}}^{\theta_{\ast}}T_{B\cup C}%
\end{align*}
where $\tilde{c}=\sup_{x\in\mathcal{S}}E^{\theta_{\ast}}[|\gamma(X_{1},\xi
_{1})||X_{1}=x]<\infty$, by Wald's identity and Assumption 1 in Section 2.
This gives
\begin{equation}
E_{y_{0}}^{\theta_{\ast}}\left[  \log\left(  \frac{Y_{T_{C}}(y_{0}%
)}{mc_{\Delta}\Delta}\right)  ;T_{B}>T_{C}\right]  \leq\log z_{0}P_{y_{0}%
}^{\theta_{\ast}}(T_{B}>T_{C})+\tilde{c}E_{y_{0}}^{\theta_{\ast}}T_{B\cup
C}-\log(mc_{\Delta}\Delta)P_{y_{0}}^{\theta_{\ast}}(T_{B}>T_{C})
\label{equation3}%
\end{equation}
Putting \eqref{equation3} into \eqref{equation2} and using the fact that
$z_{0}\leq Mc_{\Delta}\Delta$ for $y_{0}=(x_{0},z_{0})\in C^{\prime}$ yields
\begin{align*}
f(y_{0})  &  \leq E_{y_{0}}^{\theta_{\ast}}T_{B\cup C}+cP_{y_{0}}%
^{\theta_{\ast}}(T_{B}>T_{C})\log(Mc_{\Delta}\Delta)+c\tilde{c}E_{y_{0}%
}^{\theta_{\ast}}T_{B\cup C}-c\log(mc_{\Delta}\Delta)P_{y_{0}}^{\theta_{\ast}%
}(T_{B}>T_{C}){}\\
&  {}+\sup_{w\in C^{\prime}}f(w)P_{y_{0}}^{\theta_{\ast}}(T_{B}>T_{C})
\end{align*}
Now taking supremum on both sides and using Lemma \ref{lemma2} and
\ref{lemma1}, we get
\[
\sup_{w\in C^{\prime}}f(w)\leq O\left(  c_{\Delta}^{p}\log c_{\Delta}%
+\log\left(  \frac{1}{c_{\Delta}\Delta}\right)  \frac{1}{c_{\Delta}^{p}%
}\right)
\]
Suppose we start with $y_{0}\in C$, then combining with Lemma \ref{lemma3}
concludes the proposition.
\end{proof}

\section{Extensions and Remarks\label{SectExtensions}}

As noted in equation (\ref{Dec1}) the perpetuity $D$ satisfies the
distributional fixed point equation
\[
D=_{d}B+AD^{\prime},
\]
where $A=\exp\left(  Y\right)  $, $D^{\prime}$ has the same distribution as
$D$, and $\left(  A,B\right)  $ is independent of $D^{\prime}$. We wish to
compare our development to that of Collamore et al (2011), which state their
results in the absence of Markov modulation, so to make the comparison more
transparent we will omit the Markov chain $\{X_{n}\}$ from our discussion
here. We have assumed that $B$ is non-negative but we believe that this is not
a strong assumption. We can typically reduce to the case of non-negative $B$.
Indeed, if the $B_{i}$'s can take negative values, we can let $\widetilde
{B}_{i}=\left\vert B_{i}\right\vert $ and define
\begin{equation}
\widetilde{D}=\widetilde{B}_{1}+\exp\left(  Y_{1}\right)  \widetilde{B}%
_{2}+.... \label{PerNeg}%
\end{equation}
Note that the tail of $\widetilde{D}$ is in great generality equivalent (up to
a constant) to that of $D$. Since $\{D>1/\Delta\}\subseteq\{\widetilde
{D}>1/\Delta\}$ we can use the likelihood ratio constructed to estimate the
tail of $\widetilde{D}$ but apply it to the event $I\left(  D>1/\Delta\right)
$. The same efficiency analysis applies automatically. So, throughout our
discussion we shall keep assuming that $B$ is non-negative.

Now, one could consider more general fixed point equations, for instance,
\begin{equation}
D=_{d}B+A\max\left(  D^{\prime},C\right)  \label{Dec1bis}%
\end{equation}
where $D^{\prime}$ has the same distribution as $D$, $\left(  A,B,C\right)  $
are independent of $D^{\prime}$. This equation is the focus of Collamore et al
(2011). Their assumptions are similar to ours, primarily that $\theta_{\ast
}>0$ satisfying $EA^{\theta_{\ast}}=1$ can be computed; that suitable moment
conditions are satisfied for $B$ and $C$, and that the associated
exponentially tilted distributions can be simulated. Their estimator for the
tail of $D$ is biased, but it enjoys asymptotic optimality properties parallel
to strong efficiency. In this sense this estimator is close in spirit to our
state-independent importance sampler. However, their construction is
completely different to ours, as we shall explain now.

Equation (\ref{Dec1bis}) characterizes the steady-state distribution (if it
exists) of the Markov chain, $\{V_{n}:n\geq0\}$, defined via $V_{0}=v_{0}$ and%
\begin{equation}
V_{n+1}=B_{n+1}+A_{n+1}\max\left(  V_{n},C_{n+1}\right)  , \label{MCC}%
\end{equation}
where $\{\left(  A_{n},B_{n},C_{n}\right)  :n\geq1\}$ is an i.i.d. sequence.
Collamore et al (2011) uses a regenerative ratio representation for the
steady-state distribution of $\{V_{n}\}$, assuming suitable minorization
conditions required for regeneration are in place. Clearly, there are
advantages to simulating $V_{n}$ (which is Markovian) as opposed to the
"backward" process, which corresponds to the discounted reward process (whose
limit is the perpetuity and it is not Markovian but requires keeping track of
$S_{n}=Y_{1}+...+Y_{n}$). Nevertheless, one of the important features of
importance sampling is that it can be applied to estimate conditional
expectations of sample path functions given the even of interest (in this case
$D>1/\Delta$). This is also why we wanted our algorithms to be developed under
the presence of Markovian modulation, without resorting to a decomposition
such as (\ref{Dec1}).

This feature, we believe, is quite attractive specially in some of the
applications behind our motivation to study discounted process, such as
insurance and finance. The problem with using the "forward" representation
(i.e. $\{V_{n}\}$ and the associated regenerative ratio) is that, while the
tail estimation of $D$ is preserved, it is difficult to use the associated
algorithms for estimation of conditional sample path expectations.

Finally, we point out that a similar coupling idea to the one used in the
construction of (\ref{PerNeg}) can be applied to reduce the analysis of
(\ref{Dec1bis}) to the case of standard perpetuities. In particular, define
$\widetilde{B}_{n+1}=B_{n+1}+A_{n+1}C_{n+1}$, and plug this definition into
(\ref{PerNeg}) to define $\widetilde{D}$. Then we have that $\widetilde{D}\geq
D$, where $D$ is the limit of the "backward" representation associated to
(\ref{Dec1bis}). Since $\widetilde{D}$ and $D$ are typically tail equivalent
(except for a constant), again we can proceed as indicated earlier. Because
$\{D>1/\Delta\}\subseteq\{\widetilde{D}>1/\Delta\}$ we can use the likelihood
ratio constructed to estimate the tail of $\widetilde{D}$ but apply it to the
event $I\left(  D>1/\Delta\right)  $. Again, bias is introduced because of the
infinite horizon nature of $D$, but the rare-event simulation problem has been
removed by the importance sampling strategy constructed based on
$\widetilde{D}$.

\section{Numerical Experiments\label{SectSDIS_Numerics}}

We run our algorithm for the ARCH(1) sequence in Example 1 with $\alpha
_{0}=1,\ 2$ and $\alpha_{1}=3/4,\ 4/5$. In this example there is no Markov
modulation. Using the transformation into $T_{n}$ as shown in Example 1, the
tail probability of the steady-state distribution of the ARCH(1) process with
target level $1/\Delta$ is equivalent to the tail probability of a perpetuity
with $\lambda(X_{i},\eta_{i})=\alpha_{0}$, $\gamma(X_{i},\xi_{i})=\log
\alpha_{1}+\log\chi_{i}^{2}$ where $\chi_{i}^{2}$ are i.i.d. chi-square
r.v.'s, and target level $\alpha_{1}/\Delta$. One can compute easily that
\[
Ee^{\theta\gamma(X_{i},\xi_{i})}=(2\alpha_{1})^{\theta}\frac{\Gamma
(\theta+1/2)}{\Gamma(1/2)}.
\]
and hence verify that Assumptions 1, 2 and 3 are satisfied. Moreover, the
conditions in Proposition \ref{PropLIa} are also satisfied with appropriate
selection of parameters (see the discussion below). Our choices of $\alpha
_{1}$ would correspond to $\theta_{\ast}$ with values $1.68$, $1.46$ and
$1.34$ respectively. This implies a tail of the steady-state ARCH(1) model
that has finite third moment but not the fourth, which frequently arises in
the financial context (see, for example, Mikosch and Starica (2000)).

We test the performance of both our state-independent and state-dependent
sampler proposed in Sections 3 and 4 by comparing with crude Monte Carlo. To
gauge the performance of our algorithms as $\Delta$ becomes small, we tune
$\Delta$ from $0.1$ to $0.00001$ to see the effect of the magnitude of
$\Delta$ to the output performance.

For crude Monte Carlo, we truncate the maximum number of steps to be 1000 (so
that the sequence does not iterate indefinitely; note that this would
certainly cause bias in the sample).

In the case of the state-independent importance sampler, we use $a=9/10$ and
$n_{\ast}=10\log(1/\Delta)$ (where $a$ is the proportion of the barrier that
upon touching would lead to the stop of importance sampling and $n_{\ast}$ is
the number of steps we continue to simulate after $T_{\Delta/a}$).

For the state-dependent sampler, we can verify that $b_{0}=1$, $b_{1}%
=\sup_{0\leq\zeta\leq\theta_{\ast}}(\psi^{\prime\prime}(\zeta)+(\psi^{\prime
2}\left(  \zeta\right)  ))/2$ and $b_{2}=\max\{\alpha_{0}^{2\theta_{\ast}%
},1\}$ satisfy the conditions in Proposition 2. To ensure that the Lyapunov
inequality holds for small $\Delta$ one can choose $B_{1}$ and $B_{2}$ in
Proposition 2 to satisfy $B_{2}\geq1$ and
\[
\mu-2B_{1}\theta_{\ast}-\frac{b_{0}b_{2}}{B_{2}^{\theta_{\ast}}}>0
\]
In particular we can choose $B_{1}=0.45\mu/(2\theta_{\ast})$ and $B_{2}%
=\max\{(b_{0}b_{2}/(0.45\mu))^{1/\theta_{\ast}},1\}$.

For each set of input parameters (i.e. $\alpha_{0}$, $\alpha_{1}$ and $\Delta
$) we simulate using crude Monte Carlo, the state-independent sampler, and the
state-dependent sampler. For each method we fix the running time to be five
minutes for comparison. In the following tables we show the estimate,
empirical coefficient of variation (standard deviation divided by the
estimate), and 95\% confidence interval for our simulation. Tables are
deferred to the appendix below.

Next we also run the algorithm for a Markov modulated perpetuity. The
modulating Markov chain lies in state space $\{1,2\}$ and has transition
matrix
\[
K=\left[
\begin{array}
[c]{ll}%
\frac{1}{2} & \frac{1}{2}\\
1 & 0
\end{array}
\right]
\]
We use $\lambda(1,\eta_{1})=\lambda(1)=1$ and $\lambda(2,\eta_{1}%
)=\lambda(2)=2$ a.s.. Also, $\gamma(1,\xi_{1})=\log(2/3)+\log\chi_{i}^{2}$ and
$\gamma(2,\xi_{1})=\log(3/4)+\log\chi_{i}^{2}$, where again $\chi_{i}^{2}$ are
i.i.d. chi-square random variables.

Again we experiment using crude Monte Carlo and both state-independent and
dependent importance samplers. Similar to the ARCH(1) setup, \ a simple
calculation reveals that $e^{\chi(1,\theta)}=(2\times(2/3))^{2\theta}%
\Gamma(\theta+1/2)/\Gamma(1/2)$ and $e^{\chi(2,\theta)}=(2\times
(3/4))^{2\theta}\Gamma(\theta+1/2)/\Gamma(1/2)$. Moreover, $e^{\psi(\theta
)}=(e^{\chi(1,\theta)}+\sqrt{e^{2\chi(1,\theta)}+8e^{\chi(1,\theta
)+\chi(2,\theta)}})/4$, so $\theta_{\ast}=1.60$. We take $u_{\theta}%
(1)=(\sqrt{e^{2\chi(1,\theta)}+8e^{\chi(1,\theta)+\chi(2,\theta)}}%
+e^{\chi(1,\theta)})/(4e^{\chi(1,\theta)})$ and $u_{\theta}(2)=1$.

The computational effort needed for this Markov-modulated problem appears to
be substantially heavier than the case of ARCH model, and hence we perform a
longer and more extensive simulation study. We tune $\Delta$ from $0.1$ to
$0.002$, and for each scenario we run the simulation for one hour for each
method. For crude Monte Carlo, we use $100,000$ as our step truncation. For
state-independent importance sampler we use $a=9/10$ and $n_{\ast}%
=1000\log(1/\Delta)$. For state-dependent importance sampler, the setup according to Proposition 1 is as follows. First, for every $\theta\in(0,\theta_*)$, we use a normalization of $u_\theta(\cdot)$ such that $\min_{x\in\mathcal S}u_\theta(x)=1$. This choice of $u_\theta(\cdot)$ satisfies continuity in $\theta$ in a small neighborhood $(\theta_*-\rho_\Delta,\theta_*)$ (regarding $u_\theta(\cdot)$ as a vector-valued function). This is because the largest eigenvalue of the matrix $Q_\theta$ is isolated; a small perturbation of $\theta$ leads to a small change in the angle of the associated eigenspace, and hence the eigenvector taken as the direction of the eigenspace is continuous in $\theta$. Consequently, the minimum taken over the components of the eigenvector is also continuous in $\theta$, and so is the eigenvector in the particular normalization that we use. As a result, we can choose $b_{0}=1$. For other parameters, we then use $b_{1}=\sup_{\zeta\in(0,\theta_{\ast})}(\psi^{\prime\prime}%
(\zeta)+\psi^{\prime2}\left(  \zeta\right)  )/2$, and $b_{2}=\max\{\sup
_{x\in\mathcal{S}}\lambda(x)^{2\theta_{\ast}},1\}\sup_{x\in\mathcal{S}}%
E_{x}e^{\chi(X_{1},\theta_{\ast})}$. Then taking $B_{1}=0.45\mu/(2\theta
_{\ast})$ and $B_{2}=\max\{(b_{0}b_{2}/(0.45\mu))^{1/\theta_{\ast}},1\}$ will
satisfy the Lyapunov inequality for small enough $\Delta$. The numerical
outputs are shown in the appendix below.

For the ARCH model, it is notable from the coefficient of variation and
confidence interval that both state-independent and state-dependent samplers
perform better than crude Monte Carlo starting from $\Delta=0.001$. Crude
Monte Carlo has much larger coefficient of variation when $\Delta$ is 0.0005,
and it merely fails (i.e.\ does not generate any positive sample) when
$\Delta$ is 0.00001. On the other hand, the coefficient of variation for
state-independent sampler remains at around 1 to 2 and that for
state-dependent sampler remains under 50 for all the cases we considered. The
state-independent sampler appears to perform better than state-dependent
sampler for our range of $\Delta$, although one should keep in mind there is
bias issue in that algorithm. Similar results hold for the Markov-modulated
perpetuity, where crude Monte Carlo fails completely when $\Delta$ is 0.005 or
larger while the importance samplers still perform reasonably well.


\subsection{Appendix: Numerical Output}

ARCH model, parameter values: $\alpha_{0}=1,\ \alpha_{1}=3/4$%

\begin{tabular}
[c]{l|ccc}%
\multicolumn{4}{c}{Crude Monte Carlo}\\
\multicolumn{4}{c}{}\\
& Estimate & C.V. & 95\% C.I.\\\hline
$\Delta=0.1$ & $6.65\times10^{-2}$ & 3.75 & $[6.43\times10^{-2},6.86\times
10^{-2}]$\\
$\Delta=0.05$ & $2.89\times10^{-2}$ & 5.80 & $[2.74\times10^{-2},
3.04\times10^{-2}]$\\
$\Delta=0.001$ & $1.11\times10^{-4}$ & 95.05 & $[1.37\times10^{-5}%
,2.08\times10^{-4} ]$\\
$\Delta=0.0005$ & $2.11\times10^{-5}$ & 217.9 & $[-2.02\times10^{-5}%
,6.24\times10^{-5}]$\\
$\Delta=0.00001$ & 0 & N/A & N/A
\end{tabular}

\begin{tabular}
[c]{l|ccc}%
\multicolumn{4}{c}{State-Independent Sampler}\\
\multicolumn{4}{c}{}\\
& Estimate & C.V. & 95\% C.I.\\\hline
$\Delta=0.1$ & $6.84\times10^{-2}$ & 1.79 & $[6.82\times10^{-2},6.86\times
10^{-2}]$\\
$\Delta=0.05$ & $2.84\times10^{-2}$ & 1.76 & $[2.83\times10^{-2}%
,2.85\times10^{-2}]$\\
$\Delta=0.001$ & $1.10\times10^{-4}$ & 1.75 & $[1.09\times10^{-4}%
,1.10\times10^{-4}]$\\
$\Delta=0.0005$ & $4.01\times10^{-5}$ & 1.79 & $[3.99\times10^{-5}%
,4.02\times10^{-5}]$\\
$\Delta=0.00001$ & $1.34\times10^{-7}$ & 1.72 & $[1.33\times10^{-7}%
,1.35\times10^{-7}]$%
\end{tabular}

\begin{tabular}
[c]{l|ccc}%
\multicolumn{4}{c}{State-Dependent Sampler}\\
\multicolumn{4}{c}{}\\
& Estimate & C.V. & 95\% C.I.\\\hline
$\Delta=0.1$ & $6.80\times10^{-2}$ & 3.68 & $[6.63\times10^{-2},6.96\times
10^{-2}]$\\
$\Delta=0.05$ & $2.82\times10^{-2}$ & 5.81 & $[2.67\times10^{-2}%
,2.96\times10^{-2}]$\\
$\Delta=0.001$ & $1.45\times10^{-4}$ & 24.77 & $[7.70\times10^{-5}%
,2.14\times10^{-4}]$\\
$\Delta=0.0005$ & $4.22\times10^{-5}$ & 29.93 & $[1.49\times10^{-5}%
,6.95\times10^{-5}]$\\
$\Delta=0.00001$ & $2.48\times10^{-7}$ & 37.71 & $[-5.43\times10^{-8}%
,5.49\times10^{-7}]$%
\end{tabular}

ARCH model, parameter values: $\alpha_{0}=2,\ \alpha_{1}=3/4$%

\begin{tabular}
[c]{l|ccc}%
\multicolumn{4}{c}{Crude Monte Carlo}\\
\multicolumn{4}{c}{}\\
& Estimate & C.V. & 95\% C.I.\\\hline
$\Delta=0.1$ & $1.51\times10^{-1}$ & 2.37 & $[1.48\times10^{-1},1.54\times
10^{-1}]$\\
$\Delta=0.05$ & $6.64\times10^{-2}$ & 3.75 & $[6.40\times10^{-2}%
,6.88\times10^{-2}]$\\
$\Delta=0.001$ & $2.61\times10^{-4}$ & 61.92 & $[1.13\times10^{-4}%
,4.08\times10^{-4}]$\\
$\Delta=0.0005$ & $8.19\times10^{-5}$ & 110.5 & $[1.64\times10^{-6}%
,1.62\times10^{-4}]$\\
$\Delta=0.00001$ & 0 & N/A & N/A
\end{tabular}

\begin{tabular}
[c]{l|ccc}%
\multicolumn{4}{c}{State-Independent Sampler}\\
\multicolumn{4}{c}{}\\
& Estimate & C.V. & 95\% C.I.\\\hline
$\Delta=0.1$ & $1.50\times10^{-1}$ & 1.92 & $[1.495\times10^{-1}%
,1.503\times10^{-1}]$\\
$\Delta=0.05$ & $6.85\times10^{-2}$ & 2.48 & $[6.83\times10^{-2}%
,6.88\times10^{-2}]$\\
$\Delta=0.001$ & $3.00\times10^{-4}$ & 1.87 & $[2.99\times10^{-4}%
,3.02\times10^{-4}]$\\
$\Delta=0.0005$ & $1.09\times10^{-4}$ & 1.69 & $[1.09\times10^{-4}%
,1.10\times10^{-4}]$\\
$\Delta=0.00001$ & $3.69\times10^{-7}$ & 1.69 & $[3.67\times10^{-7}%
,3.71\times10^{-7}]$%
\end{tabular}

\begin{tabular}
[c]{l|ccc}%
\multicolumn{4}{c}{State-Dependent Sampler}\\
\multicolumn{4}{c}{}\\
& Estimate & C.V. & 95\% C.I.\\\hline
$\Delta=0.1$ & $1.50\times10^{-1}$ & 2.38 & $[1.46\times10^{-1},1.54\times
10^{-1}]$\\
$\Delta=0.05$ & $6.92\times10^{-2}$ & 3.66 & $[6.57\times10^{-2}%
,7.27\times10^{-2}]$\\
$\Delta=0.001$ & $5.54\times10^{-4}$ & 42.46 & $[-2.14\times10^{-4}%
,1.32\times10^{-3}]$\\
$\Delta=0.0005$ & $1.61\times10^{-5}$ & 42.72 & $[-8.85\times10^{-6}%
,4.11\times10^{-5}]$\\
$\Delta=0.00001$ & $9.13\times10^{-9}$ & 34.65 & $[-8.78\times10^{-9}%
,2.70\times10^{-8}]$%
\end{tabular}

ARCH model, parameter values: $\alpha_{0}=1,\ \alpha_{1}=4/5$%

\begin{tabular}
[c]{l|ccc}%
\multicolumn{4}{c}{Crude Monte Carlo}\\
\multicolumn{4}{c}{}\\
& Estimate & C.V. & 95\% C.I.\\\hline
$\Delta=0.1$ & $7.79\times10^{-2}$ & 3.44 & $[7.45\times10^{-2},
8.14\times10^{-2} ]$\\
$\Delta=0.05$ & $3.53\times10^{-2}$ & 5.23 & $[3.29\times10^{-2},
3.76\times10^{-2}]$\\
$\Delta=0.001$ & $2.27\times10^{-4}$ & 66.39 & $[2.80\times10^{-5},
4.26\times10^{-4}]$\\
$\Delta=0.0005$ & $9.13\times10^{-5}$ & 104.6 & $[-3.52\times10^{-5},
2.18\times10^{-4}]$\\
$\Delta=0.00001$ & 0 & N/A & N/A
\end{tabular}

\begin{tabular}
[c]{l|ccc}%
\multicolumn{4}{c}{State-Independent Sampler}\\
\multicolumn{4}{c}{}\\
& Estimate & C.V. & 95\% C.I.\\\hline
$\Delta=0.1$ & $7.78\times10^{-2}$ & 1.72 & $[7.75\times10^{-2},
7.81\times10^{-2}]$\\
$\Delta=0.05$ & $3.43\times10^{-2}$ & 1.56 & $[3.41\times10^{-2},
3.44\times10^{-2}]$\\
$\Delta=0.001$ & $2.02\times10^{-4}$ & 1.57 & $[2.01\times10^{-4},
2.03\times10^{-4}]$\\
$\Delta=0.0005$ & $8.00\times10^{-5}$ & 1.53 & $[7.96\times10^{-5},
8.05\times10^{-5}]$\\
$\Delta=0.00001$ & $4.21\times10^{-7}$ & 1.55 & $[4.18\times10^{-7},
4.24\times10^{-7}]$%
\end{tabular}

\begin{tabular}
[c]{l|ccc}%
\multicolumn{4}{c}{State-Dependent Sampler}\\
\multicolumn{4}{c}{}\\
& Estimate & C.V. & 95\% C.I.\\\hline
$\Delta=0.1$ & $7.84\times10^{-2}$ & 3.39 & $[7.62\times10^{-2},
8.07\times10^{-2}]$\\
$\Delta=0.05$ & $3.47\times10^{-2}$ & 5.21 & $[3.27\times10^{-2},
3.67\times10^{-2}]$\\
$\Delta=0.001$ & $1.31\times10^{-4}$ & 27.97 & $[4.61\times10^{-5},
2.16\times10^{-4}]$\\
$\Delta=0.0005$ & $6.52\times10^{-5}$ & 23.15 & $[2.71\times10^{-5},
1.03\times10^{-4}]$\\
$\Delta=0.00001$ & $8.09\times10^{-7}$ & 26.51 & $[ -6.15\times10^{-9},
1.62\times10^{-6}]$%
\end{tabular}

ARCH model, parameter values: $\alpha_{0}=2,\ \alpha_{1}=4/5$%

\begin{tabular}
[c]{l|ccc}%
\multicolumn{4}{c}{Crude Monte Carlo}\\
\multicolumn{4}{c}{}\\
& Estimate & C.V. & 95\% C.I.\\\hline
$\Delta=0.1$ & $1.59\times10^{-1}$ & 2.30 & $[1.55\times10^{-1},
1.64\times10^{-1} ]$\\
$\Delta=0.05$ & $7.96\times10^{-2}$ & 3.40 & $[7.62\times10^{-2},
8.30\times10^{-2}]$\\
$\Delta=0.001$ & $4.49\times10^{-4}$ & 47.20 & $[1.71\times10^{-4},
7.27\times10^{-4}]$\\
$\Delta=0.0005$ & $2.69\times10^{-4}$ & 60.92 & $[5.39\times10^{-5},
4.85\times10^{-4}]$\\
$\Delta=0.00001$ & 0 & N/A & N/A
\end{tabular}

\begin{tabular}
[c]{l|ccc}%
\multicolumn{4}{c}{State-Independent Sampler}\\
\multicolumn{4}{c}{}\\
& Estimate & C.V. & 95\% C.I.\\\hline
$\Delta=0.1$ & $1.62\times10^{-1}$ & 1.67 & $[1.61\times10^{-1},
1.62\times10^{-1}]$\\
$\Delta=0.05$ & $7.74\times10^{-2}$ & 1.57 & $[7.71\times10^{-2},
7.77\times10^{-2}]$\\
$\Delta=0.001$ & $5.12\times10^{-4}$ & 1.57 & $[5.10\times10^{-4},
5.15\times10^{-4}]$\\
$\Delta=0.0005$ & $2.03\times10^{-4}$ & 1.74 & $[2.01\times10^{-4},
2.04\times10^{-4}]$\\
$\Delta=0.00001$ & $1.07\times10^{-6}$ & 1.59 & $[ 1.06\times10^{-6},
1.08\times10^{-6}]$%
\end{tabular}

\begin{tabular}
[c]{l|ccc}%
\multicolumn{4}{c}{State-Dependent Sampler}\\
\multicolumn{4}{c}{}\\
& Estimate & C.V. & 95\% C.I.\\\hline
$\Delta=0.1$ & $1.61\times10^{-1}$ & 2.28 & $[1.56\times10^{-1},
1.65\times10^{-1}]$\\
$\Delta=0.05$ & $7.77\times10^{-2}$ & 3.44 & $[7.33\times10^{-2},
8.22\times10^{-2}]$\\
$\Delta=0.001$ & $4.32\times10^{-4}$ & 44.24 & $[-2.64\times10^{-4},
1.13\times10^{-3}]$\\
$\Delta=0.0005$ & $1.39\times10^{-4}$ & 23.65 & $[ 6.72\times10^{-6},
2.71\times10^{-4}]$\\
$\Delta=0.00001$ & $5.93\times10^{-7}$ & 32.41 & $[-5.70\times10^{-7},
1.76\times10^{-6}]$%
\end{tabular}

Markov-modulated perpetuity%

\begin{tabular}
[c]{l|ccc}%
\multicolumn{4}{c}{Crude Monte Carlo}\\
\multicolumn{4}{c}{}\\
& Estimate & C.V. & 95\% C.I.\\\hline
$\Delta=0.1$ & $7.23\times10^{-2}$ & 3.58 & $[5.64\times10^{-2},
8.82\times10^{-2}]$\\
$\Delta=0.05$ & $2.61\times10^{-2}$ & 6.12 & $[1.60\times10^{-2},
3.62\times10^{-2}]$\\
$\Delta=0.02$ & $3.09\times10^{-3}$ & 17.98 & $[-4.07\times10^{-4},
6.58\times10^{-3}]$\\
$\Delta=0.005$ & 0 & N/A & N/A\\
$\Delta=0.002$ & 0 & N/A & N/A
\end{tabular}

\begin{tabular}
[c]{l|ccc}%
\multicolumn{4}{c}{State-Independent Sampler}\\
\multicolumn{4}{c}{}\\
& Estimate & C.V. & 95\% C.I.\\\hline
$\Delta=0.1$ & $5.82\times10^{-2}$ & 45.89 & $[5.55\times10^{-2},
6.10\times10^{-2}]$\\
$\Delta=0.05$ & $2.08\times10^{-2}$ & 10.45 & $[2.04\times10^{-2},
2.11\times10^{-2}]$\\
$\Delta=0.02$ & $5.24\times10^{-3}$ & 14.67 & $[5.13\times10^{-3},
5.34\times10^{-3}]$\\
$\Delta=0.005$ & $5.92\times10^{-4}$ & 11.25 & $[5.73\times10^{-4},
6.10\times10^{-4}]$\\
$\Delta=0.002$ & $1.37\times10^{-4}$ & 9.25 & $[1.34\times10^{-4},
1.40\times10^{-4}]$%
\end{tabular}

\begin{tabular}
[c]{l|ccc}%
\multicolumn{4}{c}{State-Dependent Sampler}\\
\multicolumn{4}{c}{}\\
& Estimate & C.V. & 95\% C.I.\\\hline
$\Delta=0.1$ & $5.73\times10^{-2}$ & 4.05 & $[5.35\times10^{-2},6.10\times
10^{-2}]$\\
$\Delta=0.05$ & $2.23\times10^{-2}$ & 6.62 & $[1.88\times10^{-2}%
,2.58\times10^{-2}]$\\
$\Delta=0.02$ & $3.51\times10^{-3}$ & 16.83 & $[1.74\times10^{-3}%
,5.27\times10^{-3}]$\\
$\Delta=0.005$ & $4.40\times10^{-4}$ & 47.68 & $[-4.23\times10^{-4}%
,1.30\times10^{-3}]$\\
$\Delta=0.002$ & $2.35\times10^{-5}$ & 44.40 & $[-2.26\times10^{-5}%
,6.96\times10^{-5}]$%
\end{tabular}

\section*{References}

\begin{enumerate}
\item Adler, R., Blanchet, J., and Liu, J. C. (2010) Fast Simulation of
Gaussian Random Fields with High Excursions. \textit{Preprint}.

\item Asmussen (2000), \emph{Ruin Probabilities}, World Scientific.

\item Asmussen (2003), \emph{Applied Probability and Queues}, Second Edition, Springer.

\item Asmussen, S. and Glynn, P. (2008), \emph{Stochastic Simulation:
Algorithms and Analysis}, Springer-Verlag.

\item Asmussen, S. and Nielsen, H. M. (1995), Ruin probabilities via local
adjustment coefficients, \emph{J. Appl. Prob.}, \textbf{32}, 736-755.

\item Benoite de Saporta (2005). Tail of the stationary solution of the
stochastic equation $Y(n+1)=a(n)Y(n)+b(n)$ with Markovian coefficients,
\emph{Stoc. Proc. Appl.} \textbf{115}(12), 1954--1978.

\item Blanchet, J. and Glynn, P. (2008), Efficient rare-event simulation for
the maximum of heavy-tailed random walks, \emph{Ann. Appl. Prob.},
\textbf{18}(4), 1351-1378.

\item Blanchet, J., Glynn, P., and Liu, J. C. (2007), Fluid heuristics,
Lyapunov bounds and efficient importance sampling for a heavy-tailed G/G/1
queue, \emph{QUESTA}, \textbf{57 }(2-3), 99--113.

\item Blanchet, J., Leder, K., and Glynn, P. (2009), Lyapunov functions and
subsolutions for rare event simulation, \emph{Preprint}.

\item Blanchet, J. and Sigman, K. (2011), Perfect Sampling of Perpetuities,
\emph{Journal of Applied Probability}, Special Vol. \textbf{48A}, 165-183.

\item Blanchet, J. and Zwart, B. (2007), Importance sampling of compounding
processes, \emph{WSC '07: Proceedings of the 39th conference on Winter
simulation}, 372-379.

\item Collamore, J. F. (2002), Importance sampling techniques for the
multidimensional ruin problem for general Markov additive sequences of random
vectors, \emph{Ann. Appl. Prob.}, \textbf{12}(1), 382-421.

\item Collamore, J. F. (2009), Random recurrence equations and ruin in a
Markov-dependent stochastic economic environment, \emph{Ann. Appl. Prob.},
\textbf{19}(4), 1404-1458.

\item Collamore, J. F., Diao, G., and Vidyashankar A. N. (2011), Rare event
simulation for processes generated via stochastic fixed point equations.
\emph{Preprint.}

\item Diaconis, P. and Freedman, D. (1999), Iterated random functions,
\emph{SIAM Review}, \textbf{41}(1), 45-76.

\item Dufresne, D. (1990), The distribution of a perpetuity, with applications
to risk theory and pension funding, \emph{Scandinavian Actuarial Journal},
\textbf{1}(2), 39-79.

\item Dupuis, P. and Wang, H. (2004), Importance sampling, large deviations,
and differential games, \emph{Stoc. and Stoc. Reports}, \textbf{76}, 481-508.

\item Dupuis, P. and Wang, H. (2007), Subsolutions of an Isaacs equation and
efficient schemes for importance sampling: Convergence analysis, \emph{Math.
Oper. Res.} \textbf{32}, 723--757.

\item Embrechts, P., Kl\"{u}ppelberg, C., and Mikosch, T. (1997),
\emph{Modelling Extremal Events for Insurance and Finance}, Springer.

\item Enriquez, N., Sabot, C., Zindy, O. (2009). A probabilistic
representation of constants in Kestens renewal theorem. \emph{Prob. Theory and
Related Fields}, \textbf{144}, 581--613.

\item Glasserman, P. and Kou, S. (1995), Analysis of an importance sampling
estimator for tandem queues, \emph{ACM Transactions on Modeling and Computer},
\textbf{5}(1), 22-42.

\item Goldie, C. M. (1991), Implicit renewal theory and tails of solutions of
random equations, \emph{Ann. Appl. Prob.}, \textbf{1}, 126-166.

\item Kesten, H. (1973), Random difference equations and Renewal theory for
products of random matrices , \emph{Acta Mathematica}, \textbf{131}(1), 207-248.

\item Liu, J. (2001), \emph{Monte Carlo Strategies in Scientific Computing}, Springer.

\item Mikosch, T. and Starica, C. (2000), Limit theory for the sample
autocorrelations and extremes of a Garch(1,1) process, \emph{Ann. Stat.},
\textbf{28}, 1427-1451.

\item Nyrhinen, H. (2001), Finite and infinite time ruin probabilities in a
stochastic economic environment, \emph{Stoc. Proc. and Appl.}, \textbf{92}, 265-285.

\item Pollack, M. and Siegmund, D. (1985), A diffusion process and its
applications to detecting a change of sign of Brownian motion,
\emph{Biometrika}, \textbf{72}(2), 267-280.
\end{enumerate}

\end{document}